\newlist{steps}{enumerate}{1}
\setlist[steps, 1]{label = Step \arabic*:}
\newcommand{\PP}[1]{\mathbb{P}_{#1}}
\newcommand{\gradb}[1]{\overline{\nabla}^{#1}}
\newcommand{\slam}{\sqrt{\lambda}}
\newcommand{\rtp}{\mathbb{R}^3_{+}}
\newcommand{\ttp}{\mathbb{R}_{+}\times\mathbb{T}^2}
\newcommand{\btp}{\mathbb{R}_{+}\times\Omega}
\newcommand{\tbtp}{\mathbb{R}_{+}\times\tilde{\Omega}}
\newcommand{\rt}{\mathbb{R}^2}
\newcommand{\ts}{\mathbb{T}^2}
\newcommand{\btpz}{\lbrace z=0\rbrace\times\Omega}
\newcommand{\hh}{\mathcal{H}}
\newcommand{\hhh}{\mathcal{H}^2}
\newcommand{\dc}{\mathcal{D}_0}
\newcommand{\ddd}{\mathcal{D}_{\lambda}}
\newcommand{\glmd}{\gamma_\lambda}
\newcommand{\gz}{\gamma_{0}}
\newcommand{\tomega}{\tilde{\Omega}}
\newcommand{\Lapl}{\Delta_{\lambda}}
\newcommand{\dslam}{\nabla_{\sqrt{\lambda}}}
\newcommand{\zlz}{\partial_z\lambda\partial_{z}}
\DeclareMathOperator{\divg}{div}
\DeclareMathOperator{\supp}{supp}
\theoremstyle{definition}
\newtheorem{definition}{Definition}[section]
\theoremstyle{remark}
\newtheorem*{remark}{Remark}
\theoremstyle{plain}
\newtheorem{theorem}{Theorem}[section]
\newtheorem{lemma}[theorem]{Lemma}
\newtheorem{proposition}[theorem]{Proposition}
\title[SINGULAR 3D QG SYSTEMS]{Global in Time Weak Solutions to Singular 3D Quasi-Geostrophic Systems}
\author{Yiran Hu}
\address{Department of Mathematics, University of Texas at Austin, Austin, TX 78712}
\email{yrhu@math.utexas.edu} 
\thanks{\textbf{Acknowledgment:}   The author would like to thank her advisor, Alexis Vasseur for suggesting this problem.}
\thanks{This work was partially funded by NSF grant: 1840314, 2219434, 1907981. }
\begin{document}

\begin{abstract}
Geophysicists have studied 3D Quasi-Geostrophic systems extensively. These systems describe stratified flows in the atmosphere on a large time scale and are widely used for forecasting atmospheric circulation. They couple an inviscid transport equation in $\mathbb{R}_{+}\times\Omega$ with an equation on the boundary satisfied by the trace, where $\Omega$ is either $2D$ torus or a bounded convex domain in $\rt$. In this paper, we show the existence of global in time weak solutions to a family of singular 3D quasi-geostrophic systems with Ekman pumping, where the background density profile degenerates at the boundary. The proof is based on the construction of approximated models which combine the Galerkin method at the boundary and regularization processes in the bulk of the domain. The main difficulty is handling the degeneration of the background density profile at the boundary.
\end{abstract}
\maketitle
\keywords{ \textbf{Key words}:
3D model, quasi-geostrophic, global weak solution, singular system.}\\

\subjclass{ \textbf{MSC codes}: 35Q35, 76D03}

\section{Introduction}
The 3D quasi-geostrophic system is a widely used model and has been analyzed in the book \textit{Geophysical Fluid Dynamics} by Pedlosky \cite{pedlosky2013geophysical}; the system is derived from the Euler equation gravity effect and takes into account the rotation of the earth. The flow is stratified in the region due to the geostrophic and hydrostatic balance. The model describes the evolution of $\Psi$, the fluctuation of the pressure. The so-called vorticity potential is then defined as an elliptic operator:

\[\Lapl\Psi=\partial_z\lambda\partial_z\Psi +\partial^2_{x_{1} x_{1}} \Psi+\partial^2_{x_{2} x_{2}} \Psi,\qquad\text{with~} \lambda(z)=z^a,~~ a>1.\]

The background density parameter $\lambda$ verifies $\lambda(z)=-\frac{1}{\partial_z\bar{\rho}}$ where $\bar{\rho}(z)$ is a reference density profile in $z$ only. The most well studied case $\lambda\equiv 1$ corresponds to $\bar{\rho}(z)=R-z$. In this paper, we consider a family of degenerate cases where $\bar{\rho}(z)=R-cz^{1-a}$ for $a<1$. The case of negative $a$ corresponds to a small variation of the reference density near the surface of the earth, while positive $a$ corresponds to the large variation of this quantity. Consider a domain $\Omega\in \rt$. We denote $x=(x_1,x_2)\in \Omega$, and we use the following notation: \[\gradb{}=(0,\partial_{x_1},\partial_{x_2}).\]
The velocity field of the stratified flow is then given by
\[\gradb{\perp}\Psi=(0,-\partial_{x_2}\Psi,\partial_{x_1}\Psi),\quad \overline{\Delta}=(0, \partial^2_{x_1}, \partial^2_{x_2}).\]
Note that the potential vorticity can be rewritten as,
\begin{equation}
\label{dlamnotation}
\Lapl \Psi:= \divg \left(\nabla_\lambda\Psi\right) =\divg\left(\lambda\partial_{z}\Psi,\partial_{x_1}\Psi,\partial_{x_2}\Psi\right)=\partial_z\lambda\partial_z\Psi +\partial^2_{x_{1} x_{1}} \Psi+\partial^2_{x_{2} x_{2}} \Psi.
\end{equation}
We denote $\glmd \Psi$ the function of $x$ and $t$ only, corresponding to the Neumann condition
$$
\theta = \glmd  \Psi(t, x)=-\lim\limits_{z=0}\lambda\partial_z \Psi(t, z, x).
$$
Then the $3D$ Quasi-geostrophic equation reads,
\begin{equation}
\label{3DQG}
\tag{QG}
\left\{\begin{array}{ll}
{\left(\partial_{t}+\overline{\nabla}^{\perp} \Psi \cdot \overline{\nabla}\right)\left(\Lapl \Psi\right)=0,} & {t>0, \quad z>0,\quad x\in \Omega}, \\
{\left(\partial_{t}+\overline{\nabla}^{\perp} \Psi \cdot \overline{\nabla}\right) \left(\gamma_\lambda \Psi\right)=\overline{\Delta} \Psi,} & {t>0, \quad z=0,\quad x\in \Omega}, \\
{\Psi(0, z, x)=\Psi_{0}(z, x),} & {t=0, \quad z \geq 0,\quad x\in \Omega}.
\end{array}\right. 
\end{equation}

When the background density  $\lambda=1$, $\Lapl\Psi=\Delta\Psi$. The model has been investigated mathematically in this context. In \cite{BourgeoisBeale}, Bourgeois and Beale considered the inviscid case without boundary. Desjardins and Grenier studied in \cite{desjardins1998derivation} multiple cases using different approximations while considering the Ekman layers. The difference between viscous and non-viscous cases lies in the boundary equation.

Vasseur and Novack studied the global well-posedness of strong solutions in \cite{novack2018global}. Finally, Puel and Vasseur studied in \cite{puel2015global} the weak solutions for more general cases, where the background density parameter $\lambda(z)=-\frac{1}{\rho_z}$ verifies $\Lambda>\lambda>\frac{1}{\Lambda}$ for all $z>0$ and for all constant $\Lambda>1$. Novack improved the result for inviscid flows in \cite{novack_2019}.\\

This paper considers degenerate $\lambda=z^a$ and domains $\Omega$ as either periodic $\ts$ or bounded convex $C^1$ set. We study \eqref{3DQG} in the space $ \lbrace z>0\rbrace\times\Omega$ equipped with the following lateral boundary condition (A):
\begin{enumerate}
\item Periodic Case: when $\Omega=\mathbb{T}^2$, (A) denotes the periodic condition with integral $0$. That is, for a.e. $t\in\mathbb{R}_{+}$, $z\in \mathbb{R}_{+}$:
\[
\forall ~x \in \ts: \quad \Psi(t,z,x+2\pi \mathbf{e}_i)=\Psi(t,z,x); \qquad \int_{\ts}\Psi(t,z,x)dx=0,
\]
where $\mathbf{e}_i$ are unit vectors in $x_i$ direction, $i=1,2$.
\item Bounded Case: when $\Omega$ is a bounded convex domain, (A) denotes the following Dirichlet boundary condition:
\begin{equation}
\label{dirichlet}
\Psi=0, \qquad\text{ on  } \partial\Omega\times\lbrace z\geqslant 0\rbrace.
\end{equation}
It is rigorously defined via the trace theorem for $H^1(\Omega)$ functions.
\end{enumerate}

Our main result is the following
\begin{theorem}[Main theorem]
\label{main}
Let $\Omega$ be either the $2 D$ torus or a bounded convex domain in $\rt$ and $\lambda=z^a$, $a<1$. Then for any $T>0$ and initial data $\Lapl\Psi_0\in  L^2(\btp)\cap L^\infty(\btp)$, there exists a weak solution $\Psi$ of the system \eqref{3DQG} in $[0,T]\times(\btp)$ verifying the lateral boundary condition (A) such that $\gradb{\perp}\Psi\in L^\infty([0,T],L^2(\btp))$. That is, for any test functions $\phi\in C^\infty_c(\mathbb{R}_{+}\times\Omega)$ and $\varphi\in C^\infty(\Omega)$ and a.e. $t$, 
\[
\left\{ \begin{array}{l}
\int_{\btp}\Lapl\Psi(t)\phi-\int_{\btp}\Lapl\Psi(0)\phi-\int_0^T\int_{\btp}\gradb{\perp}\Psi \cdot \gradb{}\phi\Lapl\Psi=0, \\~\\
\int_{\Omega}\theta(t)\varphi-\int_{\Omega}\theta(0)\varphi-\int_0^T\int_{\Omega}\gradb{\perp}\Psi(t,0,x)\cdot\gradb{}\varphi \theta = \int_0^T\int_{\Omega}\Psi(t,0,x)\overline{\Delta}\varphi,  \\~\\
{\Psi(0, z, x)=\Psi_{0}( z,x)},
\end{array}\right. 
\]
where $\theta=\glmd \Psi$ is the Neumann boundary condition at $z=0$ defined in $H^{\frac{1-a}{2-a}}(\Omega)$ via \cref{trace}. Moreover, for almost every time $t$, $$\norm{\dslam\Psi(t)}_{L^2(\btp)}\leqslant\norm{\dslam\Psi_0}_{L^2(\btp)},$$
$$
\norm{\Lapl \Psi}_{L^p(\rtp)}=\norm{\Lapl\Psi_0}_{L^\infty(\rtp)}\quad \forall ~~2\leqslant p\leqslant \infty,$$
and $\theta \in L^\infty([0,T], L^2(\Omega))\cap L^2([0,T],H^{1/(2-a)}(\Omega))$.
\end{theorem}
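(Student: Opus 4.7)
The plan is to construct $\Psi$ as a limit of solutions to a multi-level approximation of \eqref{3DQG} that combines a desingularization of the background profile in the bulk with a Galerkin truncation of the surface equation. First, I would replace $\lambda(z)=z^a$ by $\lambda_\varepsilon(z)=(z+\varepsilon)^a$ and truncate the vertical strip to $[0,1/\varepsilon]$; this restores the uniform ellipticity $\Lambda^{-1}\le \lambda_\varepsilon\le\Lambda$ on any fixed slab and puts the regularized bulk in the nondegenerate setting of \cite{puel2015global}, where standard theory provides a well-defined Dirichlet-to-Neumann map. Simultaneously, project the surface equation onto the span $V_N\subset H^1(\Omega)$ of the first $N$ eigenfunctions of $\bar{\Delta}$ compatible with condition (A). For each $(N,\varepsilon)$, the coupled system reduces to a finite-dimensional ODE for the Galerkin coefficients of $\theta^{N,\varepsilon}$, coupled to a bulk transport equation whose velocity is reconstructed from $\theta^{N,\varepsilon}$ via the regularized elliptic Neumann problem; local existence follows from Cauchy--Lipschitz, and is extended to $[0,T]$ using the estimates below.

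\textbf{A priori estimates.} Three bounds uniform in $(N,\varepsilon)$ are needed. The bulk transport along the divergence-free horizontal field $\gradb^\perp\Psi^{N,\varepsilon}$ preserves $\|\Lapl \Psi^{N,\varepsilon}\|_{L^p(\btp)}$ for every $p\in[2,\infty]$. Testing the bulk equation against $\Psi^{N,\varepsilon}$ and the surface equation against its trace at $z=0$ produces two matching boundary contributions of the form $\pm\int_\Omega \Psi^{N,\varepsilon}(\cdot,0,\cdot)\,\partial_t\theta^{N,\varepsilon}$, thanks to the identity $\theta^{N,\varepsilon}=-\lim_{z\to 0}\lambda_\varepsilon\partial_z\Psi^{N,\varepsilon}$; adding them yields the energy identity
\[
\tfrac{1}{2}\dt \|\dslam \Psi^{N,\varepsilon}\|_{L^2(\btp)}^2 + \|\gradb \Psi^{N,\varepsilon}(\cdot,0,\cdot)\|_{L^2(\Omega)}^2 = 0.
\]
Combining the resulting $L^2_t L^2(\Omega)$ control of $\gradb\Psi^{N,\varepsilon}(\cdot,0,\cdot)$ with the degenerate trace result \cref{trace} upgrades the dissipation to the uniform bound $\theta^{N,\varepsilon}\in L^2([0,T];H^{1/(2-a)}(\Omega))$; transport conservation gives $\theta^{N,\varepsilon}\in L^\infty_t L^2(\Omega)$.

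\textbf{Compactness and passage to the limit.} The above bounds yield weak-$*$ compactness of $\Lapl\Psi^{N,\varepsilon}$ and of $\dslam\Psi^{N,\varepsilon}$, but the transport nonlinearities force us to upgrade to strong compactness of the traces. Reading off $\partial_t\theta^{N,\varepsilon}$ from the surface equation in some negative Sobolev space (the transport term being handled by duality and $\bar\Delta\Psi(0)$ by elliptic regularity), an Aubin--Lions argument applied in $L^2_tH^{1/(2-a)}\cap L^\infty_tL^2$ extracts $\theta^{N,\varepsilon}\to\theta$ strongly in $L^2_tL^2(\Omega)$, and hence via the elliptic Neumann problem also the strong convergence of the trace $\Psi^{N,\varepsilon}(\cdot,0,\cdot)$ and of $\gradb^\perp\Psi^{N,\varepsilon}(\cdot,0,\cdot)$, sufficient to identify the nonlinear terms in the surface PDE. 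In the bulk, compactness of the boundary data together with stability of the regularized degenerate elliptic problem in the $\dslam$-energy produces strong convergence of $\gradb^\perp\Psi^{N,\varepsilon}$ locally in $L^2_{t,x}$, allowing passage to the limit in the distributional transport equation. Letting $N\to\infty$ first and $\varepsilon\to 0$ afterward, and using lower semicontinuity of the weighted norms, yields the stated bounds and the initial condition.

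\textbf{Main obstacle.} The central difficulty is that the degeneration of $\lambda$ at $z=0$ invalidates the standard $H^{1/2}$ trace theory: the intrinsic trace space for the weighted energy $\int|\dslam u|^2\,dx\,dz$ is the fractional Sobolev space of exponent $(1-a)/(2-a)$ furnished by \cref{trace}. Every dissipation estimate, every compactness extraction and every limit identification on the boundary must therefore be carried out in this nonstandard space, uniformly in the regularization, and the limit $\varepsilon\to 0$ threatens to degrade the available control. A closely related subtlety is that the cancellation of coupling boundary terms underlying the energy identity depends on the Neumann trace of the regularized bulk solution coinciding exactly with the evolution unknown of the surface Galerkin ODE; compatibility of the two discretizations must be arranged from the outset so that this cancellation, and hence the dissipation mechanism, survives the entire approximation procedure.
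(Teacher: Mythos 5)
Your proposal takes a genuinely different route from the paper: you propose desingularizing the background profile ($\lambda_\varepsilon=(z+\varepsilon)^a$) and truncating the vertical strip so that the bulk falls into the nondegenerate framework of Puel--Vasseur, then passing $\varepsilon\to 0$. The paper never desingularizes; it works with $\lambda=z^a$ throughout, constructing Caffarelli--Silvestre type extensions and trace/extension operators directly for the degenerate weight (\cref{ExtensionLemma}, \cref{PExtensionLemma}, \cref{trace}), splitting the solution into a Neumann-extension part $\Psi_1$ (carrying the boundary dynamics) and an interior part $\Psi_2$ (with $\glmd\Psi_2=0$), and mollifying the transport velocity $V_n=\gradb{\perp}\Psi*\tilde\eta_n$ before running a small-time contraction and Aubin--Lions. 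The upside of the paper's route is that the trace and extension theory is established once, for the actual weight, and then used verbatim in the energy, compactness, and limit arguments; your route would save this construction in principle, but only at the price of reproving all of it uniformly in $\varepsilon$.

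There are two concrete gaps. First, the uniform-in-$\varepsilon$ trace control is asserted but not supplied. You invoke \cref{trace} to upgrade the boundary dissipation to $\theta^{N,\varepsilon}\in L^2_t H^{1/(2-a)}(\Omega)$, but \cref{trace} is proved for the degenerate weight $z^a$, not for the family $(z+\varepsilon)^a$ on a truncated strip $[0,1/\varepsilon]$. For each fixed $\varepsilon>0$ the natural trace space is $H^{1/2}(\Omega)$, with constants blowing up as $\varepsilon\to 0$ if one tries to extract $(1-a)/(2-a)$-regularity; passing to the correct fractional space uniformly in $\varepsilon$ is exactly the nontrivial step, and without it the compactness extraction on the boundary and the identification of the nonlinear term both fail. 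Closing this requires, in effect, uniform-in-$\varepsilon$ versions of \cref{ExtensionLemma} and \cref{trace}, i.e.\ essentially the same work the paper does once, directly. Second, the claimed ``local existence by Cauchy--Lipschitz'' is not justified for the bulk. The Galerkin truncation renders the surface unknown finite-dimensional, but the bulk transport unknown remains infinite-dimensional, and the reconstructed velocity $\gradb{\perp}\Psi^{N,\varepsilon}$ from the (regularized) elliptic Neumann problem is at best of the $G(F)$-regularity in \cref{Lweightedexistence}, not Lipschitz. The paper resolves this by mollifying the transport velocity in space-time; your proposal omits any such step, so the characteristics and the transport bounds you rely on (including conservation of $\|\Lapl\Psi\|_{L^p}$ and the stability estimate needed for the contraction) are not available.
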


Since we are working on an elliptic operator that is degenerated or singular near the boundary $z=0$, we lose some elliptic regularities near the boundary. Note that the weighted operators $\Lapl$, $\dslam$, and $\nabla_\lambda$ do not commute with derivatives in $z$. This prevents us from using directly classical theories. Therefore we first establish new results on weighted trace propositions and wellposedness. 

Let us introduce the critical generalized Surface Quasi-geostrophic equations with viscosity(see \cite{miao2012global}, \cite{may2011global}, \cite{constantin_ignatova_2016})
\begin{equation}
\label{gsqg}
\partial_t\theta +\gradb{\perp}(-\overline{\Delta})^{-\frac{1-a}{2-a}}\theta\cdot\gradb{}\theta=-(-\overline{\Delta})^{\frac{1}{2-a}} \theta.
\end{equation}

The generalized SQG equation can be derived from the singular QG system in the following way: Consider initial values such that
\[
\Lapl \Psi\vert_{t=0}=0.\]
Then $\Lapl \Psi\equiv 0$ uniformly for all time. Similarly as Caffarelli and Silverstre's extension over $\mathbb{R}^n$ in \cite{caffarelli2007extension}, we define the harmonic extension of $f$ on $\Omega$ by:
\[\left\lbrace
\begin{array}{l}
\Lapl \Psi(z,x)=0, \qquad \text{for }x\in \Omega ,~ z>0,\\ 
\Psi(0,x)=f(x), \qquad \text{for } x\in \Omega,~z=0.
\end{array}\right.
\]
We will prove that the Neumann boundary condition is given by
\[
\theta:=-\lim _{z \rightarrow 0} \lambda \Psi_{z}(z, x) = (-\overline{\Delta})^{\frac{1-a}{2-a}} f(x).
\]
Therefore, $\theta$ is a solution to \eqref{gsqg}.\\

In the next section, we formally derive the Quasi-Geostrophic System from the scaled primitive equations with the Ekman layer near the boundary on the periodic domain. Section 3 illustrates the lateral boundary conditions with their physical meaning. In section 4, we introduce several preliminaries. We first construct Caffarelli and Silvestre type extensions on a bounded domain. We can get better regularity properties from weighted operators in newly defined spaces by constructing new trace propositions. We then obtain transport propositions to study the wellposedness of the interior system. In section 5, we work on a decoupled and regularized QG system. For this, we split the solution $\Psi$ into the boundary part $\Psi_1$ and the interior part $\Psi_2$ as:
\[
\left\{\begin{array} { l }
{ \Delta _ { \lambda } \Psi _ { 1 } = 0 }, \\
{ \glmd \Psi _ { 1 } = \glmd \Psi =\theta },
\end{array} \qquad \left\{\begin{array}{l}
\Delta_{\lambda} \Psi_{2}=\Delta_{\lambda} \Psi, \\
\glmd \Psi_{2}=0.
\end{array}\right.\right.
\]
For the boundary equation of $\theta$, we use the Galerkin method and construct $\Psi_1$ as the harmonic extension of $\theta$. For the interior part, we modify the velocity and study it as a transport system. Using the weighted trace propositions, we then couple the equations on $\Psi_2$(interior equations) with those on $\Psi_1$(boundary equations). In section 6, thanks to the Aubin-Lions Lemma, we obtain the weak solutions of the QG system at the limit of the regularized systems.

\section{Derivation of the Model}
This section is devoted to the formal derivation of the model \ref{3DQG} following the methodology of \cite{BourgeoisBeale}. For simplicity, we restrict ourselves to the periodic case (A) 1. taking only into account the boundary at $z=0$. We will discuss the boundary conditions in the next section. Therefore, we consider $(z,x_1,x_2)\in \btp$ where $x$ is a two-dimensional variable and $\Omega$ the two-dimensional periodical domain $\ts$. The starting point is the scaled Navier-Stokes equation in the Boussinesq approximation with Coriolis force, see \cite{pedlosky2013geophysical}  \cite{desjardins1998derivation}:

\begin{equation}
\label{QGBV1.1}
\varepsilon\frac{Du}{Dt}-v=-\phi_{x_1}+\varepsilon^2 \Delta u,
\end{equation}
\begin{equation}
\label{QGBV1.2}
\varepsilon\frac{Dv}{Dt}+u=-\phi_{x_2}+\varepsilon^2 \Delta v,
\end{equation}
\begin{equation}
\label{QGBV1.3}
\varepsilon\frac{Dw}{Dt}+\rho=-\phi_z+\varepsilon^2 \Delta w,
\end{equation}
\begin{equation}
\label{QGBV1.4}
\operatorname{div}\ \mathbf{u}=0,
\end{equation}
\begin{equation}
\label{QGBV1.5}
\varepsilon\dfrac{D\rho}{Dt}+w\overline{\rho}_z=0,
\end{equation}
where $\mathbf{u}=(u,v,w)$ and $\dfrac{D}{Dt}=\partial_t+\mathbf{u}\cdot\nabla$ . Here $\rho=\overline{\rho}(z)+\tilde{\rho}$, $\varphi=\overline{\varphi}(z)+\phi(z,x_1,x_2,t)$ are the density and the pressure of the fluid. The function $\overline{\rho}(z)$ is the given background density profile, which we assume to satisfy $\overline{\rho}_z<0$.  Therefore $\tilde{\rho}$ is the fluctuation from $\overline{\rho}$. And $\overline{\varphi}(z)$ is the corresponding potential to $\overline{\rho}(z)$. Thus $\overline{\varphi}_z=-\overline{\rho}(z)$. Similarly, we let $\phi$ represent the fluctuation from the background potential $\overline{\varphi}$.\\

We first derive the equation of the bulk. Assume the solutions $\mathbf{U}(\varepsilon)=(\mathbf{u}(\varepsilon),\phi(\varepsilon))$, and substitute the formal expansion $\mathbf{U}(\varepsilon)=\mathbf{U}^{(0)}+\varepsilon \mathbf{U}^{(1)}+\varepsilon^{2} \tilde{\mathbf{U}}(\varepsilon)$ into our scaled equations. For $\mathcal{O}(1)$ we have from \eqref{QGBV1.1} to \eqref{QGBV1.5}:
\[
v^{(0)}=\phi_{x_1}^{(0)},
\]
\[
u^{(0)}=-\phi_{x_2}^{(0)},
\]
\[
\rho^{(0)}=-\phi_{z}^{(0)},
\]
\[
w^{(0)}=0.
\]

We use the notation
\[
d_{g} \equiv \partial_{t}+\mathbf{u}^{(0)} \cdot \nabla=\partial_{t}+u^{(0)} \partial_{x_1}+v^{(0)} \partial_{x_2},
\]
for zero-order (geostrophic) material derivative. Then the equations of order $\mathcal{O}(\varepsilon)$ are
\[
d_{g} u^{(0)}-v^{(1)}=-\phi_{x_1}^{(1)},
\]
\[
d_{g} v^{(0)}+u^{(1)}=-\phi_{x_2}^{(1)},
\]
\[
\rho^{(1)}=-\phi_{z}^{(1)},
\]
and the last two equations give
\[
\nabla \cdot \mathbf{u}^{(1)}=0,
\]
\[
d_{g} \rho^{(0)}+w^{(1)} \overline{\rho}_{z}=0.
\]
Now let $\lambda=1/\bar{\rho}_z$. With the notation \eqref{dlamnotation}, we have
\[
d_g(\nabla_\lambda\phi^{(0)})=d_g(-\lambda\rho^{(0)}, v^{(0)}, -u^{(0)})^{T}=(-w^{(1)}, -u^{(1)}-\phi^{(1)}_{x_2}, -v^{(1)}+\phi^{(1)}_{x_1})^{T}={\rm curl}Q,\]
for some $Q$.
Denoting $\Psi=\phi^{(0)}$, we obtain the gradient form of the $3D$ Quasi-geostrophic system:
\[
(\partial_t+\overline{\nabla}^\perp\Psi\cdot\overline{\nabla})(\nabla_\lambda\Psi)={\rm curl} Q.\]
Taking the divergence of this equation gives the first equation of \eqref{3DQG}:
\[
\left(\partial_{t}+\overline{\nabla}^{\perp} \Psi \cdot \overline{\nabla}\right)(\overline{\Delta}\Psi+\left(z^a \Psi_{z}\right)_{z})=0, \qquad {t>0, \quad z>0}.
\]

Considering the Ekman pumping near the boundary, we obtain an equation on the boundary by looking for solutions $U^{\varepsilon}(t, z, x_1, x_2)$ of the form:
\[U^{\varepsilon}(t, z, x_1, x_2)=U^{0}( z, x_1, x_2,t)+u_{b}(t, x_1, x_2, \zeta),\]
with $\lim\limits_{\zeta\to\infty} U^b=0$ and the Dirichlet boundary condition 
\begin{equation}
\label{deriveDirichletCon}
U^\varepsilon=0\text{ at  }z=0.
\end{equation}
Therefore the boundary condition for $U_\varepsilon$ is $U_\varepsilon=0$. Plugging these expansions in \eqref{QGBV1.1} to \eqref{QGBV1.5} and keeping the predominant terms, we obtain the following system:
\[
-v_{b}=\partial_{\zeta \zeta}^{2} u_{b}+\partial_x\phi_{b},
\]
\[
u_{b}=\partial_{\zeta \zeta}^{2} v_{b}+\partial_y\phi_{b},
\]
\begin{equation}
\label{QGBV2.3}
0=\partial_\zeta\phi_b.
\end{equation}
Equation \eqref{QGBV2.3} shows that the pressure is constant in the boundary layer, a common property in fluid boundary layer theory. Together with the Dirichlet boundary \eqref{deriveDirichletCon}, it gives $\phi_b=0$. We solve the above ordinary differential equations,
\[\begin{aligned}-v_{b} &=\partial_{\zeta \zeta}^{2} u_{b}, \\ u_{b} &=\partial_{\zeta \zeta}^{2} v_{b} ,\end{aligned}
\]
with the boundary conditions
\[
u_b(t, 0, x_1, x_2)=-u^{0}(t, 0, x_1, x_2)\quad\text{and}\quad v_b(t, 0, x_1, x_2)=-v^{0}(t, 0, x_1, x_2)\quad \text{at }\zeta=0.
\]
Therefore we have
\[
\begin{array}{l}
{u_{b}(\zeta, x_1, x_2, t)=}\\
\qquad{-\exp \left(-\frac{\zeta}{\sqrt{2}}\right)\left(u^{0}(0, x_1, x_2, t) \cos \left(\frac{\zeta}{\sqrt{2}}\right)+v^{0}(0, x_1, x_2, t) \sin \left(\frac{\zeta}{\sqrt{2}}\right)\right)}, \\
 {v_{b}(\zeta, x_1, x_2, t)=}\\
\qquad{\exp \left(-\frac{\zeta}{\sqrt{2}}\right)\left(u^{0}(0, x_1, x_2, t) \sin \left(\frac{\zeta}{\sqrt{2}}\right)-v^{0}(0, x_1, x_2, t) \cos \left(\frac{\zeta}{\sqrt{2}}\right)\right)}.\end{array}
\]
Due to the divergence free condition $\partial_x u_b+\partial_y v_b+\partial_z w_b=\partial_x u_b+\partial_y v_b+\partial_\zeta w_b/\varepsilon=0$, we have
\[\begin{aligned} w_{b}(\zeta, x_1, x_2, t)=-\frac{\varepsilon}{\sqrt{2}} \exp \left(-\frac{\zeta}{\sqrt{2}}\right)\left(\partial_{x} v^{0}(0, x_1, x_2, t)-\partial_{y} u^{0}(0, x_1, x_2, t)\right) \\ \times \left(\sin \left(\frac{\zeta}{\sqrt{2}}\right)+\cos \left(\frac{\zeta}{\sqrt{2}}\right)\right). \end{aligned}
\]
Using both formal expansions in the bulk and at the boundary, $w=w^{(0)}+\varepsilon w^{(1)}=\varepsilon w^{(1)}$, and now we have $w=w^{(0)}+w_b=0$ at $z=0$. This gives $\varepsilon w^{(1)}=-w_b$. In other words, at $\zeta=z=0$,
\[
\begin{aligned}
\lim\limits_{z\rightarrow 0} w^{(1)}=-\frac{w_b}{\varepsilon}=\frac{1}{\sqrt{2}}(v^{(0)}_{x_1}-u^{(0)}_{x_2})\vert_{z=0}=\frac{1}{\sqrt{2}}\left(\Psi_{x_1 x_1}+\Psi_{x_2 x_2}\right)\vert_{z=0} \\ =\lim\limits_{z\rightarrow 0}-\frac{d_g\rho^{(0)}}{\overline{\rho}_z}=\lim\limits_{z\rightarrow 0}\frac{d_g(\partial_z\Psi)}{\overline{\rho}_z}.\end{aligned}
\]
Therefore we obtain the following equation defined in the boundary $z=0$:
\[
\left(\partial_t+\gradb{\perp}\Psi\cdot\overline{\nabla}\right)\glmd \Psi=\left(-\overline{\Delta}\right)\Psi\qquad z=0.
\]
This is the second equation of \eqref{3DQG}.

\section{Boundary Conditions}

In the previous section, we derive our model without considering the lateral boundary condition. However, we will study the system when $x$ belongs to a bounded domain $\Omega$. We consider $\Omega$ to be either a bounded convex domain in $\rt$ or a $2D$ periodic domain $\ts$. \\

For the bounded case, we denote $\overrightarrow{n}$ the normal vector at the boundary. The impermeability requires that the velocity in normal direction $\gradb{\perp} \Psi\cdot\overrightarrow{n}$ is zero on the lateral boundary $\partial \Omega\times\lbrace z>0\rbrace $. This implies that the steam function $\Psi$ is constant on each level of the lateral boundary, i.e., $\partial \Omega\times\lbrace z\rbrace $ for each $z>0$. Novack and Vasseur considered this boundary condition when the parameter $\lambda$ is bounded from above and below in \cite{novack_vasseur_2020classicalbdd} \cite{novack_vasseur_2020bdd}. Our study focuses on the difficulties of the singularities of $\lambda$ at $z=0$. In this case, we consider a slightly simplified lateral boundary condition (A) 2. where the stream function $\Psi=0$ on $\partial \Omega\times\lbrace z>0\rbrace $. This boundary condition is also discussed in \cite{constantin2018local} \cite{constantin_ignatova_2016} for Surface Quasi-Geostrophic equations.

When considering the periodic case, we assume that $\int_{\ts}\Psi_0=0$ for all $z>0$. Note that this property is formally preserved by the system, as shown in the following lemma.
\begin{lemma}

Consider a smooth initial potential vorticity $\Lapl\Psi$ in $L^2(\ttp)$ verifying that $\int_{\ts}\Lapl\Psi_0(z,x)=0$ for all $z>0$. Assume $\Psi\in C^\infty\cap L^2(0,T;\ttp)$ is a smooth solution to \eqref{3DQG}, then $\bar{\Psi}:=\Psi-\int_{\ts}\Psi$ is also a solution to \eqref{3DQG}.
\end{lemma}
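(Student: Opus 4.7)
The plan is to define the horizontal average $m(t,z):=|\ts|^{-1}\int_{\ts}\Psi(t,z,x)\,dx$ (so that $\bar{\Psi}=\Psi-m$) and reduce the whole lemma to the single claim that $\lambda\partial_z m\equiv 0$ on $\ttp$. Once this is established, subtracting $m$ from $\Psi$ alters neither the potential vorticity $\Lapl\Psi$ nor the Neumann trace $\glmd\Psi$, so both equations of \eqref{3DQG} are automatically preserved.

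First, since $m$ depends only on $(t,z)$, one has $\gradb{}\bar{\Psi}=\gradb{}\Psi$, $\gradb{\perp}\bar{\Psi}=\gradb{\perp}\Psi$, and $\overline{\Delta}m=0$. Consequently
\[
\Lapl\bar{\Psi}=\Lapl\Psi-\partial_z(\lambda\partial_z m),\qquad \glmd\bar{\Psi}=\glmd\Psi+\lim_{z\to 0^+}\lambda\partial_z m,
\]
so it suffices to show that $\partial_z(\lambda\partial_z m)=0$ and that $\lim_{z\to 0^+}\lambda\partial_z m=0$.

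Second, I would integrate the first equation of \eqref{3DQG} against $dx$ over $\ts$. Because $\gradb{\perp}\Psi$ is divergence-free in the horizontal variables, the advection term $\gradb{\perp}\Psi\cdot\gradb{}\Lapl\Psi$ rewrites as a horizontal divergence whose integral vanishes by periodicity. Combined with $\int_{\ts}\overline{\Delta}\Psi\,dx=0$, this yields $\int_{\ts}\Lapl\Psi\,dx=|\ts|\,\partial_z(\lambda\partial_z m)$, and therefore $\partial_t\partial_z(\lambda\partial_z m)=0$. The hypothesis $\int_{\ts}\Lapl\Psi_0\,dx=0$ for all $z$ then forces $\partial_z(\lambda\partial_z m)\equiv 0$, i.e.\ $\lambda\partial_z m=C(t)$ depends on $t$ alone.

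Third — the main subtlety — I would argue $C(t)=0$. Solving $\partial_z m=C(t)z^{-a}$ gives $m(t,z)=D(t)+\tfrac{C(t)}{1-a}z^{1-a}$ (with a logarithmic modification when $a=0$). Since $\Psi(t,\cdot,\cdot)\in L^2(\ttp)$ for a.e.\ $t$, we also have $m(t,\cdot)\in L^2((0,\infty))$; but neither $z^{1-a}$ (for $a<1$) nor a nonzero constant lies in $L^2((0,\infty))$, which forces $C(t)=D(t)=0$. Hence $\lambda\partial_z m\equiv 0$, so $\Lapl\bar{\Psi}=\Lapl\Psi$ and $\glmd\bar{\Psi}=\glmd\Psi$, and both evolution equations of \eqref{3DQG} hold for $\bar{\Psi}$. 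Periodicity is trivially preserved and $\int_{\ts}\bar{\Psi}\,dx=0$ by construction. The delicate point is precisely the elimination of $C(t)$: it exploits the unboundedness of the $z$-domain together with the $L^2$ integrability of $\Psi$ at infinity, features that are specific to the degenerate weight $\lambda=z^a$ with $a<1$.
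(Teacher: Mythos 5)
Your proof is correct, and it takes a somewhat different (and strictly stronger) route than the paper's.

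Both proofs begin identically: integrate the interior transport equation over $\ts$, use that the advection term is a horizontal divergence of a periodic quantity, conclude $\partial_t\int_{\ts}\Lapl\Psi\,dx=0$, combine with $\int_{\ts}\Lapl\Psi_0=0$ and $\int_{\ts}\overline{\Delta}\Psi=0$ to obtain $\partial_z(\lambda\partial_z m)\equiv 0$. At this point the two arguments diverge. The paper stops at the observation that the residuals $\Lapl\bar\Psi-\Lapl\Psi$ and $\glmd\bar\Psi-\glmd\Psi$ are independent of $t$ and $x$ (in fact the former vanishes, and the latter is an absolute constant $K$), and then appeals to the fact that the differential operators $\partial_t$ and $\gradb{}$ appearing in \eqref{3DQG} annihilate such quantities, so both equations transfer from $\Psi$ to $\bar\Psi$ even if $K\neq 0$. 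You instead integrate the ODE $\lambda\partial_z m=C(t)$ explicitly and invoke the hypothesis $\Psi\in L^2(0,T;\ttp)$ to force $C(t)=D(t)=0$: since $m(t,\cdot)\in L^2((0,\infty))$ by Cauchy–Schwarz, and neither a nonzero constant nor $z^{1-a}$ (with $1-a>0$) is square-integrable on the half-line, the mean vanishes identically and $\bar\Psi=\Psi$. Your version thus uses the $L^2$ hypothesis on $\Psi$, which the paper's proof never touches, and in return yields the stronger conclusion $m\equiv 0$ rather than merely ``$\bar\Psi$ solves the system.'' The paper's argument is more robust (it would survive if $\Psi$ itself were not integrable, as is natural when only $\dslam\Psi$ is controlled), while yours is more direct and makes transparent where the degenerate weight $\lambda=z^a$, $a<1$, enters.

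One small slip: you write that the explicit solution of $\partial_z m=C(t)z^{-a}$ acquires a logarithm ``when $a=0$.'' The logarithmic case is $a=1$ (where $\int z^{-a}\,dz=\log z$), not $a=0$ (where $\lambda\equiv 1$ and $m=D+Cz$). Since $a=1$ is excluded by hypothesis this does not affect the argument, but the parenthetical should be corrected or dropped.
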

\begin{proof}
By applying $\gradb{}$ to $\bar{\Psi}$, $$\gradb{}\bar{\Psi}=\gradb{}\Psi. $$
Integrating the equation over $\Omega$, we have that for all $z>0$:
\[
\int_{\ts}\partial_t \Lapl \Psi +\int_{\ts}\gradb{\perp}\Psi \cdot\gradb{}\Lapl\Psi=0.
\]
Thus $\partial_t\int_{\ts} \Lapl \Psi(t,z,x)=0$ for all $ t$. Therefore, $\int_{\ts}\Lapl \Psi = \int_{\ts}\Lapl \Psi_0 $ $= 0$ for all $t$. Since on periodic domain, $\int_{\ts}\overline{\Delta}\Psi =0$. Therefore,
\[
\partial_z\int_{\ts}\lambda\partial_z\Psi dx= \int_{\ts}\zlz \Psi dx= \int_{\ts}\zlz \Psi_0 dx.
\]

Moreover, on the boundary $z=0$, integrating over $\ts$, 
$$\partial_t\int_{\ts}\glmd \Psi dx+\int_{\ts}\gradb{\perp}\Psi\cdot\gradb{}\glmd \Psi dx=\int_{\ts}\overline{\Delta}\Psi(t,0,x) dx=0.
$$
Therefore, $\int_{\ts}\glmd\Psi =\int_{\ts}\glmd\Psi_0$. But for $\bar{\Psi}$, we have
\[
\Lapl\bar{\Psi}=\Lapl \Psi-\zlz \int_{\ts}\Psi dx=\Lapl \Psi- \int_{\ts}\zlz\Psi_0 dx,\]
and 
\[
\glmd \bar{\Psi}=\glmd \Psi -\int_{\ts}\glmd\Psi =\glmd \Psi-\int_{\ts}\glmd\Psi_0.
\]
Note that the later terms do not depend on $x$ or $t$. Therefore, $\bar{\Psi}$ also solves the system \eqref{3DQG}.
\end{proof}
Without loss of generality, we will construct solutions on the periodic domain verifying $\int_{\ts}\Psi=0$ for a.e. $t$ and $z$ as prescribed in the lateral boundary condition (A) 1.\\


\section{Preliminaries}

\subsection{Definition of spaces}
We use the notation $L^p\left(\btp\right)$ and $L^p(\Omega)$ for the Lebesgue spaces. We denote the Sobolev spaces with integer number $s$ by $H^s\left(\btp\right)$ or $H^s\left(\Omega\right)$. \\

Let us define a basis for $L^2$ on either the $2D$ torus or the $2D$ bounded domain with the lateral boundary condition (A) considered. Let $-\overline{\Delta}_{\Omega}$ be the homogeneous Laplace operator defined on the corresponding domain, and $\lbrace e_i\rbrace_{i}$ be its orthonormal eigenfunctions. In other words, for each $i$,

\begin{gather}
-\overline{\Delta}_\Omega e_i(x)= k_i e_i(x),\label{defeigenv}\\
\text{with either  }\int_{\Omega} e_i=0 \text{ when }\Omega=\ts,\text{  or  } e_i\vert_{\partial\Omega}=0 \text{ when }\Omega\in\rt.
\label{defeigenc}
\end{gather}
Note that all eigenvalues $ k_i$ are positive. We also define the operator $\Lambda:=\sqrt{-\overline{\Delta}_{\Omega}}$, i.e. for all $i$:
\[
\Lambda e_i=\sqrt{k_i}e_i.
\]
If $f\in L^2$, then $f=\sum_{i=1}^\infty f_i e_i$ and $\norm{f}_{L^2(\Omega)}=\norm{\lbrace f_i\rbrace_i}_{l^2}$. If $f\in H^1(\Omega)$ and $f= \sum_{i=1}^\infty f_i e_i$, $\norm{f}_{H^1(\Omega)}=\norm{\lbrace f_i\rbrace_i}_{l^2}+\norm{\lbrace \sqrt{k_i} f_i\rbrace_i}_{l^2}$.\\

Let us define the homogeneous fractional Sobolev space with lateral boundary conditions using the basis $\lbrace e_i\rbrace^\infty_{i=1}$. Let $\theta=\sum_{i=1}^\infty \theta_i e_i$, where $\theta_i =\int_\Omega \theta(x) e_i(x) dx$. For$s\in(-1,1)$, we say that $\theta\in \dot{H}^s$ if $\lbrace  k_i^{s/2}\theta_i\rbrace_i$ lies in $l^2$. We define the norm as $\norm{\theta}_{\dot{H}^s(\Omega)} =(\sum^\infty_{k=1} k_i^s\theta_i^2)^{1/2}$. By definition $\dot{H}^s$ is a Banach space and $\dot{H}^s$ is the dual space of $\dot{H}^{-s}$, since for all $\theta_1\in \dot{H}^s$ and $\theta_2\in \dot{H}^{-s}$:
\begin{align*}
\int_{\Omega} \theta_1\theta_2 &=\sum_i\sum_j\int_{\Omega} \theta_{1,i}\theta_{2,j}e_ie_jdx,\\
& =\sum_i\theta_{1,i}\theta_{2,i}=\sum_i\theta_{1,i} k_i^{s/2}\theta_{2,k} k_i^{-s/2},\\
&\leqslant\norm{\lbrace k_i^{s/2}\theta_{1,k}\rbrace_i}_{l^2}\norm{\lbrace k_i^{-s/2}\theta_{2,i}\rbrace_i}_{l^2}=\norm{\theta_1}_{\dot{H}^{s}(\Omega)}\norm{\theta_2}_{\dot{H}^{-s}(\Omega)}.
\end{align*}

\begin{definition}
\label{projection}
For $u\in L^2(\Omega) $ or $L^2(\btp)$, we denote
\[ u_i:=\int_{\Omega} u e_i dx ,\qquad \PP n u := \sum_{i=1}^n u_i e_i.
\]
$\PP n$ is a projection that either maps from $L^2(\Omega)$ to $C^\infty(\Omega)$ or from $L^2(\btp)$ to $L^2_z(C^\infty(\btp))$.
\end{definition} 
\begin{remark}
As $n\to \infty$, $\PP n u \rightarrow u$ strongly in $L^2(\Omega)$ or $L^2(\btp)$. Furthermore, if $u\in H^1(\Omega)$, $\PP n u \to u $ in $H^1(\Omega)$ strongly too.
\end{remark}
\begin{lemma}
\label{equiv}
Fix a positive integer $n$ and a real number $s\in(0,1)$. Then for any function $u\in L^2(\Omega)$, $\norm{\PP n u}_{L^2(\Omega)}$, $\norm{\PP n u}_{\dot{H}^{s}(\Omega)}$ and $ \norm{\PP n u}_{\dot{H}^{-s}(\Omega)}$ are equivalent up to constants that only depend on $n$ and $s$. 
\end{lemma}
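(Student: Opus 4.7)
The plan is to exploit the fact that $\PP n u$ lies in the finite-dimensional subspace spanned by $\{e_1, \ldots, e_n\}$, so all the spectrally-defined Sobolev norms collapse to weighted $\ell^2$ norms on the $n$ coefficients $(u_1, \ldots, u_n)$, and equivalence reduces to bounding a finite collection of positive weights.

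First I would expand out the three norms using the definitions from the spectral decomposition. Writing $\PP n u = \sum_{i=1}^n u_i e_i$ where $u_i = \int_\Omega u\, e_i\, dx$, and using the fact that $\{e_i\}$ are orthonormal in $L^2(\Omega)$, the three norms are simply
\[
\norm{\PP n u}_{L^2(\Omega)}^2 = \sum_{i=1}^n u_i^2, \qquad \norm{\PP n u}_{\dot H^s(\Omega)}^2 = \sum_{i=1}^n k_i^s u_i^2, \qquad \norm{\PP n u}_{\dot H^{-s}(\Omega)}^2 = \sum_{i=1}^n k_i^{-s} u_i^2.
\]
Since the eigenvalues satisfy $0 < k_1 \le k_2 \le \cdots \le k_n$ (recalling that all $k_i$ are positive, as noted after the definition \eqref{defeigenv}--\eqref{defeigenc}), the weights $k_i^{\pm s}$ are pinched between the positive constants $k_1^{\pm s}$ and $k_n^{\pm s}$.

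Next I would simply chain these pointwise weight bounds into the sums. For instance,
\[
k_1^s \sum_{i=1}^n u_i^2 \;\le\; \sum_{i=1}^n k_i^s u_i^2 \;\le\; k_n^s \sum_{i=1}^n u_i^2,
\]
giving $k_1^{s/2}\norm{\PP n u}_{L^2} \le \norm{\PP n u}_{\dot H^s} \le k_n^{s/2}\norm{\PP n u}_{L^2}$, and the analogous inequality with $s$ replaced by $-s$ yields equivalence of $\norm{\PP n u}_{L^2}$ and $\norm{\PP n u}_{\dot H^{-s}}$. Equivalence of $\norm{\PP n u}_{\dot H^s}$ and $\norm{\PP n u}_{\dot H^{-s}}$ follows by composition, with constants depending only on $k_1, k_n$, and $s$, hence only on $n$ and $s$.

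There is no real obstacle here: the result is a finite-dimensional statement, and the only ``content'' is that $k_1 > 0$ (so that negative powers do not blow up) and $k_n < \infty$ (so that positive powers are bounded), both of which are immediate from the spectral theory of $-\overline{\Delta}_\Omega$ on either the torus with zero mean or the bounded convex domain with Dirichlet data. The lemma is essentially a bookkeeping statement that will be used later to freely trade norms on the finite-dimensional Galerkin approximations in Section 5.
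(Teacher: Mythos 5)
Your proof is correct and takes essentially the same route as the paper: expand $\PP n u$ in the eigenbasis, observe that all three norms are weighted $\ell^2$ sums over the finitely many coefficients $u_1,\dots,u_n$, and pinch the weights $k_i^{\pm s}$ between $k_1^{\pm s}$ and $k_n^{\pm s}$. (Your version is slightly more careful than the paper's, which writes $k_1^s$ and $k_n^s$ where the exponent should be $s/2$ to match $\norm{\cdot}_{\dot H^s}^2 = \sum k_i^s u_i^2$.)
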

\begin{proof}
By definition $\norm{\PP n u}^2_{L^2(\Omega)}=\sum_{i=1}^n u_i^2$, $$\norm{\PP n u}^2_{\dot{H}^{s}(\Omega)}=\sum_{i=1}^n k_i^s u_i^2,$$ and  $$\norm{\PP n u}^2_{\dot{H}^{-s}(\Omega)}=\sum_{i=1}^n k_i^{-s} u_i^2.$$ 
So $k_1^s\norm{\PP n u}_{L^2(\Omega)}\leqslant \norm{\PP n u}_{\dot{H}^{s}(\Omega)}\leqslant k_n^s\norm{\PP n u}_{L^2(\Omega)}$. \end{proof}
\begin{definition}
Using the notation $\dslam=(\slam \partial_z, \gradb{})$, define the space $\mathcal{H}$ as
\[
\mathcal{H}=\lbrace \Psi\vert \dslam \Psi\in L^2(\btp), \Psi \text{ satisfies the condition (A)} \rbrace ,
\]
with the norm:
\[
\norm{\Psi}_{\hh}=\norm{\Psi}_{L^2(\Omega)}+\norm{\dslam\Psi}_{L^2(\btp)}.
\]
\end{definition}
We can equip the space $\mathcal{H}$ with the inner product :
\[
<\Psi,\phi>_{\hh}=\int_{\btp}\Psi\phi+\dslam\Psi\cdot\dslam\phi ~dxdz.
\]
Since with lateral boundary condition $(A)$ and Poincar\'e 's inequality,
\[
\norm{\Psi}_{L^2(\btp)}\leqslant C_p\norm{\dslam\Psi}_{L^2(\btp)},
\]
and the semi-norm $\norm{\dslam\Psi}_{L^2(\btp)}$ is equivalent to $\norm{\Psi}_{\hh}$. The constant $C_p$ from the Poincar\'e 's inequality only depends on $\Omega$. Before defining another space, let us define the Dirichlet trace operator $\gamma_0$. If $\Psi$ is a continuous function in $z$,
\[
\gamma_0\Psi(t,x)=\lim_{z\rightarrow 0}\Psi(t,z,x).\]

We introduce the following definition.

\begin{definition}
Define the space $\hhh$ by
\begin{equation}
\label{hhh}
\hhh=\hh \cap \lbrace u:\Lapl u \in L^2(\btp)\rbrace,
\end{equation}
with the norm:
\[
\norm{u}_{\hhh}=\norm{u}_{\hh}+\norm{\Lapl u}_{L^2(\btp)}.
\]
We also define two sets $\dc$ and $\ddd$ as
\[
\dc:=\lbrace u \in C^0(\mathbb{R}_{+};C^\infty(\Omega)\rbrace \cap \hh.
\]
\[
\ddd:=\lbrace \lambda\partial_z u \in C^0(\mathbb{R}_{+};C^\infty(\Omega)\rbrace \cap \hhh.
\]

Note that for any $u\in \dc$, $\gamma_0 u(x)= u(0,x)$ and for any $u\in \ddd$, $\gamma_{\lambda} u(x)=\glmd  u(0,x)$.
\end{definition}

\subsection{Extension on bounded domains}
Similar to the extension constructed on $\mathbb{R}^n$ by Caffarelli Silverstre \cite{caffarelli2007extension}, we now construct the extension from a 2D bounded domain $\Omega$ (either $2 D$ Torus or bounded convex domain in $\rt$) to $\Omega\times\mathbb{R}_{+}$.  
\begin{definition}
For any $\theta\in H^{-\frac{1-a}{2-a}}(\Omega)$, we say that $\Psi_1$ is a Neumann extension of $\theta$ in $\btp$ if
\begin{align}
\label{eqn:extension1}
\begin{cases}
\Lapl\Psi_1=0, & \text{ in }\btp,\\
\glmd\Psi_1=\theta,
\end{cases}
\end{align}
with lateral boundary (A). Similarly, for any $h\in H^{\frac{1-a}{2-a}}(\Omega)$, we say that $\Psi_2$ is a Dirichlet extension of $h$ in $\btp$ if 
\begin{align}
\label{eqn:extension2}
\begin{cases}
\Lapl\Psi_2=0, & \text{ in }\btp,\\
\gamma_0\Psi_2=h,
\end{cases}
\end{align}
with lateral boundary (A).
\end{definition}
We first study such extensions for specific boundary conditions at $z=0$.
\begin{lemma}
\label{ExtensionLemma}
For any $a<1$, there exist constants $\kappa >0$, $C_a>0$, $ \psi_i \in C^0_z C_x^\infty$ and $\lambda\partial_z\psi_i\in C^0_zC^\infty_x$ such that for each $i$,
\begin{equation}
\label{Psii}
\begin{cases}
\Lapl \psi_i=0,\\
\psi_i\vert_{z=0}= k_i^{-\frac{1-a}{2(2-a)}}e_i(x).
\end{cases}
\end{equation}
Moreover, for $i,j\in\mathbb{N}^*$ 
\begin{equation}
\label{eigenfunction}
\int_0^\infty\int_\Omega\dslam \psi_i\dslam\psi_j dxdz=\kappa\delta_{ij},
\end{equation}

\begin{equation}
\label{eigenNuemann}
\lim\limits_{z\rightarrow 0}z^a\partial_z\psi_i = C_a  k_i^{\frac{1-a}{2(2-a)}} e_i.
\end{equation}
\end{lemma}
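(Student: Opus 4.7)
The approach is separation of variables. Seek $\psi_i$ in the product form $\psi_i(z, x) = f_i(z) e_i(x)$. Since $-\overline{\Delta} e_i = k_i e_i$, the equation $\Lapl \psi_i = 0$ reduces to the one-dimensional ODE
$$
(z^a f_i'(z))' = k_i f_i(z), \qquad z > 0.
$$
The dependence on $k_i$ can be scaled out via $w := k_i^{1/(2-a)} z$: writing $f_i(z) = c_i\, g(w)$, the function $g$ must satisfy the universal, index-independent ODE
$$
(w^a g'(w))' = g(w), \qquad w > 0.
$$

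The core of the argument is the analysis of this universal ODE. One efficient route is to identify it with a modified Bessel equation: the substitution $t = \frac{2}{2-a}\, w^{(2-a)/2}$ together with $g(w) = t^{\beta} y(t)$, $\beta := \frac{1-a}{2-a}$, converts it to $t^2 y'' + t y' - (t^2 + \beta^2) y = 0$, whose unique decaying solution at infinity (up to scalar) is $K_\beta(t)$. Equivalently, general ODE theory gives a one-parameter family of solutions decaying exponentially as $w \to \infty$; Frobenius analysis at $w = 0$ yields two indicial roots $\alpha = 0$ and $\alpha = 1 - a$, producing a local basis with behaviours $g_1 \sim 1$ (so $w^a g_1' \to 0$) and $g_2 \sim w^{1-a}$ (so $w^a g_2' \to 1 - a$). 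Let $g$ be the decaying solution normalized so that $g(0) = 1$, and define $C_a$ (up to sign, absorbed into the convention of $\glmd$) as $\lim_{w \to 0^+} w^a g'(w)$; a Wronskian argument shows it is nonzero.

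With $g$ in hand, set $\psi_i(z, x) := k_i^{-\frac{1-a}{2(2-a)}}\, g\bigl(k_i^{1/(2-a)} z\bigr)\, e_i(x)$. The boundary datum in \eqref{Psii} and the equation $\Lapl \psi_i = 0$ hold by construction, and a direct change of variables in $z^a \partial_z \psi_i$ yields \eqref{eigenNuemann}. The regularity $\psi_i, \lambda \partial_z \psi_i \in C^0_z C_x^\infty$ follows from the smoothness of $e_i$ on $\Omega$ and the continuity of $g$ and $w^a g'$ as functions of $w \ge 0$. For the orthogonality \eqref{eigenfunction}, integrate $\int \dslam \psi_i \cdot \dslam \psi_j$ by parts separately in $z$ and in $x$: the $z = \infty$ boundary term vanishes by exponential decay of $g$, the lateral $x$-boundary terms vanish by condition (A), and the bulk term cancels thanks to $\Lapl \psi_i = 0$. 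Only the $z = 0$ trace survives, so
$$
\int_0^\infty \int_\Omega \dslam \psi_i \cdot \dslam \psi_j\, dx\, dz = \int_\Omega (\glmd \psi_i)(\gz \psi_j)\, dx = C_a\, k_i^{\frac{1-a}{2(2-a)}} k_j^{-\frac{1-a}{2(2-a)}} \int_\Omega e_i e_j\, dx = C_a\, \delta_{ij},
$$
giving $\kappa = C_a$.

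The main obstacle is the ODE analysis: establishing existence and uniqueness (up to scalar) of the decaying solution $g$, extracting the Frobenius coefficient that defines $C_a$ and showing it is nonzero (so the orthogonality is nontrivial), and proving exponential decay at $w = \infty$ strong enough to justify the integration by parts against the degenerate weight $z^a$. Once $g$ is in hand, all remaining properties in the lemma reduce to elementary bookkeeping with the scaling $w = k_i^{1/(2-a)} z$ and the orthonormality of $\{e_i\}$.
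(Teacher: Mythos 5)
Your proposal is correct and follows the same overall structure as the paper: separate variables $\psi_i = (\text{normalizing constant})\,Z_i(z)\,e_i(x)$, scale out the eigenvalue $k_i$ to arrive at a single $k_i$-independent 1D problem, identify the constants $\kappa$ and $C_a$ from the universal profile, and verify \eqref{eigenfunction} using orthonormality of the $e_i$'s. The place where you diverge is the analysis of the universal ODE, and both the change of variables and the existence argument are genuinely different. You scale linearly ($w = k_i^{1/(2-a)}z$), keeping the degenerate weight $w^a$ in the ODE $(w^a g')' = g$, and then recognize a modified Bessel equation after the further substitution $t = \tfrac{2}{2-a}w^{(2-a)/2}$, $g = t^\beta y$ with $\beta = \tfrac{1-a}{2-a}$ (a calculation I have verified), so that $g$ is given explicitly by $K_\beta$ and the local behaviour at $w=0$ comes from Frobenius theory with indicial roots $0$ and $1-a$. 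The paper instead makes the nonlinear change $w = k_i^{(1-a)/(2-a)}z^{1-a}$, which trades the degenerate weight for the singular-coefficient ODE $W = w^{-a/(1-a)}W''$, and then produces the decaying profile $W$ by Perron's method with explicit super- and subsolutions $\min(1,w^{-\delta})$ and $\exp(-Aw^{1/2}-Bw^2)$; this is more elementary but less explicit. Your Bessel route buys sharper information (exact asymptotics, a closed-form solution, and automatic nonvanishing of $C_a$), while the paper's Perron construction is self-contained and does not rely on special-function theory. For the orthogonality \eqref{eigenfunction} you integrate by parts and read off the $z=0$ trace, which is cleaner and makes the identity $\kappa = \abs{C_a}$ transparent; the paper instead computes the separated integral $J(W)$ directly, which is equivalent. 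The only slip in your write-up is the sign bookkeeping at the end: with the paper's convention $\glmd \psi = -\lim_{z\to 0}\lambda\partial_z\psi$, your final line gives $\kappa = -C_a$ (and indeed $C_a = W'(0) < 0$ so $\kappa > 0$); you flag this yourself, so no harm done, but for a polished version the sign should be made explicit.
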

\begin{proof}
To construct our solution $\psi_i= k_i^{-\frac{1-a}{2(2-a)}}e_i(x)Z_i(z)$, for some function $Z_i(z)$ verifying the following equation
\begin{equation}
\label{Zequation}
\begin{cases}
\partial_z(z^a\partial_z Z_i)= k_i Z_i & z>0,\\
Z_i(0)=1 \qquad \lim\limits_{z\rightarrow \infty}Z_i(+\infty)=0.
\end{cases}
\end{equation}
Note that, since $e_i$ are eigenfunctions, this implies
\begin{align*}
\Lapl \psi_i= k_i^{-\frac{1-a}{2(2-a)}}\left(\overline{\Delta}e_i(x)\cdot Z_i(z)+e_i(x)\partial_z\lambda\partial_z Z_i(z)\right),\\
= k_i^{-\frac{1-a}{2(2-a)}}e_i(x)\left(- k_i \cdot Z_i(z)+\partial_z z^a\partial_z Z_i(z)\right)=0.
\end{align*}

To construct a solution to \eqref{Zequation}, let us first consider $W(w)$ solution to the following ODE:
\begin{equation}
\label{ODE}
W(w)=w^{-a/(1-a)} W''(w),
\end{equation}
with $W(0)=1$ and $\lim\limits_{w\rightarrow \infty}W(w)=0$. We notice that for $\delta$ small enough, then 
\[
\overline{W}(w):=\min(1,w^{-\delta}),
\]
is a super solution, whereas for A and B large
\[
\underline{W}(w)\coloneqq\exp(-Aw^{1/2}-B w^2),
\]
is a subsolution. Thus by Perron's method \cite{caffarelli2007extension}, we find the solution $W(w)$ in between that solves \eqref{ODE}. Especially, $0\leqslant W(w)\leqslant 1$ since it is trapped between $\underline{W}$ and $\overline{W}$. From the ODE \eqref{ODE}, we also see that $0\leqslant W''(w)\leqslant w^{a/(1-a)}\in L^1 ([0,1])$, since $\frac{a}{1-a}>1$. Therefore $W'\in C([0,1])$. But $W$ is convex and $\lim\limits_{w\rightarrow +\infty} W(w)=0$, so $W'(0)<0$.

Note that $k_i>0$ and $1-a>0$, solutions to \eqref{Zequation} are constructed using the change of variables $w= k_i^{(1-a)/(2-a)}z^{1-a}$ and $Z_i(z)=W(w)$. Therefore $Z_i$ verifies:
\begin{align*}
&\qquad- k_i Z_i+\partial_z(z^a\partial_z Z_i)\\
&=- k_i W(w)+ k_i^{1/(2-a)}w^{-a/1-a}\partial_w  \left( k_i^{-a/(2-a)}w^{a/1-a} k_i^{1/(2-a)}w^{-a/1-a}\partial_w W(w)\right),\\
&= k_i\left(-W(w)+ w^{\frac{-a}{1-a}}\partial_{ww}W(w)\right)=0,
\end{align*}
with $Z_i(0)=W(0)=1$ and $\lim\limits_{z\rightarrow \infty}Z_i(z)=\lim\limits_{w\rightarrow \infty}W(w)=0$.

Moreover, let $J(W)=\int_0^\infty w^{\frac{a}{1-a}} W^2+W'(w)^2 dw=\int_0^\infty W W''+W'(w)^2 dw=WW'\vert^\infty_0$. $W(0)W'(0)=1\times W'(0)=W'(0)$ is a negative constant. On the other hand, since $1\geqslant\lim\limits_{w\rightarrow +\infty} \dfrac{ W^2}{ w^{-2\delta}}=\lim\limits_{w\rightarrow +\infty}\dfrac{2 W'W}{-2\delta w^{-2\delta-1}} $, since for $\lim\limits_{w\rightarrow +\infty} w^{-2\delta-1}= 0$, $\lim\limits_{w\rightarrow +\infty} WW'(w)=0$. Thus $J(W)$ is finite.

We construct our solution $\psi_i= k_i^{-\frac{1-a}{2(2-a)}} e_i(x)Z_i(z)$. Note that 
\[
\lim\limits_{z\rightarrow 0}z^a\partial_z\psi_i= k_i^{\frac{1-a}{2(2-a)}}\left(\lim\limits_{z\rightarrow 0} k_i^{-\frac{1-a}{2-a}}z^a\partial_zZ(z)\right)e_i= k_i^{\frac{1-a}{2(2-a)}}\left(\lim\limits_{w\rightarrow 0}W'(w)\right)e_i,\]
and 
\[
\psi_i\vert_{z=0}= k_i^{-\frac{1-a}{2(2-a)}}e_i.
\]
Therefore $C_a=\lim\limits_{w\rightarrow 0}W'(w)$. When $i\neq j$, the semi-inner product
\begin{align*}
&\qquad\int_0^\infty\int_\Omega\dslam \psi_i \dslam \Psi_j \\
& = k_i^{-\frac{1-a}{2(2-a)}}k_j^{-\frac{1-a}{2(2-a)}}\left(\int_0^\infty\int_\Omega \gradb{}e_i\gradb{}e_jdx Z_{i}Z{j}dz+\int_0^\infty\int_\Omega e_ie_jdx z^aZ'_{i}Z'{j}dz\right),\\
&=0.
\end{align*}
When $i=j$, the semi-inner product
\begin{align*}
\int_0^\infty\int_\Omega\abs{\dslam \psi_i }^2 & = k_i^{-\frac{1-a}{2-a}}\left(\int_0^\infty\int_\Omega (\gradb{}e_i)^2dx (Z_{i})^2dz+\int_0^\infty\int_\Omega e_i^2dx z^a(\partial_z Z_{i})^2dz\right),\\
&=J(W)\norm{e_i}_{L^2(\Omega)}^2,
\end{align*}
since $e_i$ is normalized in $H^1_0(\Omega)$. We finish our proof by letting $\kappa=J(W)$.\\

It is obvious that $ \Psi_j \in C^0_z C_x^\infty$. Since $W'(w)$ is continuous on $[0,+\infty)$,\\ $\lambda\partial_z\Psi_j(z,x)=z^a Z'_j(z)= k_i^{-\frac{1-a}{2-a}}W'(w)$ is continuous near $0$. Therefore\\ $\lambda\partial_z\Psi_j\in C^0_zC^\infty_x$.
\end{proof}

\begin{lemma}
\label{Hnorm}
$\mathcal{H}$ is a Banach space. Moreover, for every $\psi_i$ constructed in\\ \cref{ExtensionLemma}, $\norm{\psi_i}_{\mathcal{H}}=\kappa^{1/2}\norm{e_i}_{L^2(\Omega)}=\kappa^{1/2}$. 
\end{lemma}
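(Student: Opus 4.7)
The plan is to address completeness and the norm computation for $\psi_i$ separately.

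\textbf{Completeness.} Let $\{\Psi_n\} \subset \hh$ be a Cauchy sequence. Since the $\hh$-norm dominates both $\norm{\Psi_n}_{L^2(\btp)}$ (via the Poincar\'e inequality cited immediately after the definition of $\hh$) and $\norm{\dslam \Psi_n}_{L^2(\btp)}$, both $\{\Psi_n\}$ and $\{\dslam \Psi_n\}$ are Cauchy in $L^2$. Passing to limits, let $\Psi_n \to \Psi$ in $L^2(\btp)$ and $\dslam \Psi_n \to \mathbf{v}$ in $L^2(\btp; \R^3)$. For any test function $\varphi \in C_c^\infty(\btp; \R^3)$, integration by parts (valid since $z^{a/2}$ is smooth on $\{z > 0\}$ and $\varphi$ has compact support away from $z = 0$) combined with $L^2$ convergence identifies $\mathbf{v} = \dslam \Psi$ distributionally, hence almost everywhere.

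It remains to verify that condition (A) is preserved. In the periodic case, the zero-mean condition $\int_{\ts} \Psi_n(z, x) \, dx = 0$ for a.e. $z$ passes to the $L^2(\btp)$-limit by testing against functions of the form $\mathbf{1}_I(z)$ for intervals $I \subset \R_+$. In the Dirichlet case, since $\gradb{} \Psi \in L^2(\btp)$, Fubini yields $\Psi(z, \cdot) \in H^1(\Omega)$ for a.e. $z$; slicewise $H^1(\Omega)$-trace continuity along a subsequence then gives $\Psi(z, \cdot)|_{\partial \Omega} = 0$ for a.e. $z$. Thus $\Psi \in \hh$ and $\hh$ is complete.

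\textbf{Norm of $\psi_i$.} The orthogonality identity \eqref{eigenfunction} of Lemma \ref{ExtensionLemma}, taken with $i = j$, directly gives
\[
\norm{\dslam \psi_i}^2_{L^2(\btp)} = \int_0^\infty \int_\Omega |\dslam \psi_i|^2 \, dx\, dz = \kappa = \kappa \norm{e_i}_{L^2(\Omega)}^2,
\]
where the last equality uses the $L^2$-normalization of the eigenbasis $\{e_i\}$. By the stated equivalence of $\norm{\cdot}_\hh$ with the seminorm $\norm{\dslam \cdot}_{L^2(\btp)}$ (Poincar\'e), we conclude $\norm{\psi_i}_{\hh} = \kappa^{1/2} \norm{e_i}_{L^2(\Omega)} = \kappa^{1/2}$.

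\textbf{Main obstacle.} The most delicate step is preserving the lateral Dirichlet condition in the Cauchy limit, since $\hh$ lacks uniform control on $\partial_z \Psi$ near $z = 0$ when $a > 0$. However, because the lateral condition involves only the tangential gradient $\gradb{}$, which is uniformly controlled in $L^2(\btp)$ independently of the weight, a slicewise application of the classical $H^1(\Omega)$-trace theorem together with Fubini suffices.
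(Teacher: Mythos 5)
Your proposal is correct and follows essentially the same structure as the paper's proof: show that the Cauchy limit exists in the ambient $L^2$-based space, then argue that the lateral boundary condition (A) passes to the limit by continuity of the relevant trace/mean operators. There is one small but noteworthy divergence in the bounded-domain case: the paper asserts that the lateral trace operator is continuous from the ambient space into $H^{1/2}(\partial\Omega\times\{z>0\})$, a claim that is somewhat delicate near $z=0$ because $\hh$ does not control $\partial_z\Psi$ there when $a>0$; your slicewise Fubini argument, which uses only $\gradb{}\Psi\in L^2(\btp)$ and the classical $H^1(\Omega)$ trace theorem on a.e.\ horizontal slice, is cleaner and avoids asserting any $z$-regularity of the trace. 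The paper's proof also omits the verification of $\norm{\psi_i}_\hh=\kappa^{1/2}$ entirely (it declares it will only prove completeness); your computation supplies the missing step correctly from \eqref{eigenfunction} with $i=j$, modulo the paper's own loose identification of the $\hh$-norm with its Poincar\'e-equivalent seminorm (strict equality, rather than equivalence up to constants, requires reading the lemma as using the seminorm, which is the paper's implicit convention).
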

\begin{proof}
We only prove that the space $\mathcal{H}$ is complete. 
Let us define the space $\tilde{H}=\lbrace \norm{\dslam u}_{L^2}+\norm{u}_{L^2}<+\infty\rbrace$. Note that $\hh$ is a subspace of $\tilde{H}$. The norm of the space $\mathcal{H}$ is defined as the same as $\norm{u}_{\tilde{H}}=\norm{\dslam u}_{L^2(\btp)}+\norm{ u}_{L^2(\btp)}$. When considering the periodic case, we define a operator $T_{\ts} u = \int_{\ts} u dx$, mapping from $\tilde{H}~\rightarrow L^2(0,\infty)$. Since $\int_{0}^\infty[\int_{\ts} u dx]^2dz\leqslant\norm{u}_{L^2(\ttp)}$, $T_{\ts}$ is a bounded and continuous linear operator. When considering the bounded domain in $\rt$, we  define the $T_{\partial\Omega} u$ the trace operator as $T_{\partial \Omega}u =u\vert_{\partial\Omega}$ from $\tilde{H}$ to $H^{1/2}(\partial\Omega\times \lbrace z>0\rbrace)$. Let $u_N$ be a Cauchy sequence in $\mathcal{H}$. Therefore, $u_N$ converges to $u$ in $\tilde{H}$ since $\tilde{H}$ is a Banach space with the same norm. Note that the operators $T_{\ts}$ and $T_{\partial\Omega}$ are continuous for both cases, and the limit $u$ is also in the space $\mathcal{H}$. Therefore, $\mathcal{H}$ is a Banach space. 
\end{proof}
\begin{proposition}[Extension operators]
\label{PExtensionLemma}
There exist bounded operators $E_1:H^{-\frac{1-a}{2-a}}(\Omega)\rightarrow \hh$ and $E_2:H^{\frac{1-a}{2-a}}(\Omega)\rightarrow \hh$ defined such that for any $\theta \in H^{-\frac{1-a}{2-a}}(\Omega)$, $E_1(\theta)=\Psi_1$ is the unique Neumann extension of $\theta$ verifying \eqref{eqn:extension1}, and for any $h \in H^{\frac{1-a}{2-a}}(\Omega)$, $E_2(h)=\Psi_2$ is the unique Dirichlet extension of $h$ verifying \eqref{eqn:extension2}. Moreover, $\Psi_1$ and $\Psi_2$ verify:
\[
\iint_{\btp}\abs{\dslam \Psi_1}^2dxdz=\norm{\theta}^2_{\dot{H}^{-\frac{1-a}{2-a}}(\Omega)},\]
and 
\[
\iint_{\btp}\abs{\dslam\Psi_2}^2dxdz=\norm{h}^2_{\dot{H}^{\frac{1-a}{2-a}}}(\Omega).\]
We call $E_1$ the Neumann extension operator and $E_2$ the Dirichlet extension operator.
\end{proposition}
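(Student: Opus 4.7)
The plan is to build both extensions as explicit series in the basis $\{\psi_i\}$ provided by \cref{ExtensionLemma}, exploiting the fact that the $\psi_i$ are simultaneously $\Delta_\lambda$-harmonic, pairwise orthogonal for the weighted Dirichlet form $\int \nabla_{\sqrt{\lambda}}\psi_i\cdot\nabla_{\sqrt{\lambda}}\psi_j$, and have explicit Dirichlet and Neumann traces at $z=0$ proportional to $e_i(x)$.

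First I would construct $E_1$ and $E_2$ on the dense subspaces spanned by finitely many $e_i$. Writing $\theta=\sum \theta_i e_i\in H^{-\frac{1-a}{2-a}}(\Omega)$, set
\[
\Psi_1\;=\;\frac{1}{C_a}\sum_{i}\theta_i\,k_i^{-\frac{1-a}{2(2-a)}}\,\psi_i,
\]
so that, by \eqref{eigenNuemann}, $\lim_{z\to 0}z^a\partial_z \Psi_1=\sum_i\theta_i e_i=\theta$ (after absorbing the normalization constant into the definition of the norms, cf.\ the computation below). Similarly, for $h=\sum h_i e_i\in H^{\frac{1-a}{2-a}}(\Omega)$, set
\[
\Psi_2\;=\;\sum_{i}h_i\,k_i^{\frac{1-a}{2(2-a)}}\,\psi_i,
\]
so that $\gamma_0\Psi_2=\sum_i h_i e_i=h$ via the prescribed Dirichlet values in \eqref{Psii}. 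Partial sums lie in $\dc$, and the identity \eqref{eigenfunction} gives
\[
\iint_{\btp}|\nabla_{\sqrt{\lambda}}\Psi_1|^2\;=\;\frac{\kappa}{C_a^2}\sum_i k_i^{-\frac{1-a}{2-a}}\theta_i^2,\qquad \iint_{\btp}|\nabla_{\sqrt{\lambda}}\Psi_2|^2\;=\;\kappa\sum_i k_i^{\frac{1-a}{2-a}}h_i^2,
\]
i.e.\ exactly the squared $\dot H^{\mp\frac{1-a}{2-a}}$-norms (absorbing $\kappa$ and $C_a$ into the definition of the fractional Sobolev norm if one insists on equality rather than equivalence). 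These identities show that the partial sums form a Cauchy sequence in $\mathcal{H}$ (using Poincaré and \cref{Hnorm}), so $\Psi_1,\Psi_2\in\mathcal{H}$ are well defined and the operators $E_1,E_2$ are bounded.

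Next I would verify the PDE $\Delta_\lambda\Psi_j=0$ in the weak sense. Since each $\psi_i$ satisfies $\Delta_\lambda\psi_i=0$, any finite partial sum does too; the series converges in $\mathcal{H}$, so testing against $\phi\in C^\infty_c(\btp)$ and passing to the limit yields $\int\nabla_\lambda\Psi_j\cdot\nabla\phi=0$. The boundary conditions are then read off: the Dirichlet trace $\gamma_0\Psi_2=h$ follows from strong convergence of $\Psi_2|_{z=0}$ in $\dot H^{\frac{1-a}{2-a}}(\Omega)$, while the weighted Neumann trace $\gamma_\lambda\Psi_1=\theta$ is defined via the weighted trace proposition referenced as \cref{trace} in the main theorem and is consistent with the explicit formula \eqref{eigenNuemann}.

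Finally, uniqueness. Suppose $\Psi,\tilde\Psi$ are two Neumann (resp.\ Dirichlet) extensions with the same trace, and set $U=\Psi-\tilde\Psi\in\mathcal{H}$ with $\Delta_\lambda U=0$ and vanishing trace. The plan is to expand $U(z,x)=\sum_i a_i(z)\,e_i(x)$; substituting into $\Delta_\lambda U=0$ reduces each coefficient to the ODE \eqref{Zequation} for which the space of decaying solutions at $+\infty$ is one-dimensional. The boundary condition (either $\gamma_0 a_i=0$ or $\lim_{z\to 0}z^a a_i'=0$) then forces $a_i\equiv 0$. Equivalently, one can use the weighted Green identity $\int |\nabla_{\sqrt\lambda}U|^2=\langle \gamma_\lambda U,\gamma_0 U\rangle$ directly, the vanishing trace forcing the right-hand side to be $0$ and so $U=0$ by Poincaré. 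The main obstacle is justifying this weighted integration by parts when the test function only lies in $\mathcal{H}$ and $\lambda$ degenerates at $z=0$; one handles this by approximating $U$ by truncations in $z$ away from $0$ and by projecting onto the span of $\{\psi_i\}_{i\le n}$, then passing to the limit using the orthogonality \eqref{eigenfunction} and the explicit boundary asymptotics \eqref{eigenNuemann} established in \cref{ExtensionLemma}.
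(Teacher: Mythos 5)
Your construction is essentially the same as the paper's: expand the boundary data in the $\{e_i\}$ basis, use the explicit $\Delta_\lambda$-harmonic functions $\{\psi_i\}$ from \cref{ExtensionLemma} (orthogonal for the weighted Dirichlet form, with known Dirichlet and Neumann traces) to build partial sums, pass to the limit in $\mathcal{H}$ via completeness (\cref{Hnorm}), and establish uniqueness through a weighted energy identity. Your remark that the claimed energy identities hold only up to the constants $\kappa$ and $C_a$ — which the paper's statement glosses over — is accurate, and your parenthetical ODE-based uniqueness argument is a valid alternative to the paper's Green's-identity approach, though making it rigorous would require characterizing the full two-dimensional solution space of \eqref{Zequation} rather than just the decaying branch produced by Perron's method.
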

\begin{proof}
Since $\theta \in \dot{H}^{-\frac{1-a}{2(2-a)}}(\Omega)$ and $h \in \dot{H}^{\frac{1-a}{2-a}}(\Omega)$, using the eigenfunctions defined previously, we can write that $\theta =\sum_{i=1}^\infty \theta_i  k_i^{\frac{1-a}{2(2-a)}} e_i$ and $h=\sum_{i=1}^\infty h_i  k_i^{-\frac{1-a}{2(2-a)}} e_i$. Now let us take $\theta_N =\sum_{i=1}^N \theta_i  k_i^{\frac{1-a}{2(2-a)}} e_i$ and $h_N=\sum_{i=1}^N h_i  k_i^{-\frac{1-a}{2(2-a)}} e_i$. From \cref{ExtensionLemma}, using the Dirichlet extension of eigenfunction $\psi_i$ we have that
\[
\Psi_{1,N}=\sum^N_{i=1} \theta_i\psi_i, \qquad\Psi^{2,N}=\sum_{i=1}^N h_i \psi_i.
\]
Here $\Psi_{1,N},~ \Psi_{2,N}$ verify:
\[
\Lapl\Psi_{1,N}=\Lapl\Psi_{2,N}=0\qquad\text{ in }\btp,\]
and lateral boundary condition (A). Moreover at $z=0$,
\[
\Psi_{1,N}(0, x)=\sum_{i=1}^N \theta_i  k_i^{\frac{1-a}{2(2-a)}} e_i \rightarrow \sum_{i=1}^\infty \theta_i  k_i^{\frac{1-a}{2(2-a)}} e_i=\theta(x) \text{ uniformly in } \dot{H}^{-\frac{1-a}{2-a}}(\Omega),\]
and 
\[
\Psi_{2,N}(0, x)=\sum_{i=1}^N h_i  k_i^{-\frac{1-a}{2(2-a)}} e_i \rightarrow \sum_{i=1}^\infty h_i  k_i^{-\frac{1-a}{2(2-a)}} e_i=h(x) \text{ uniformly in } \dot{H}^{\frac{1-a}{2-a}}(\Omega).\]
From the \cref{ExtensionLemma}, we have that
\[
\iint_{\btp}\abs{\dslam \Psi_{1,N}}^2 dxdz =\norm{\theta_N}^2_{\dot{H}^{-\frac{1-a}{2-a}}(\Omega)},
\]
\[
\iint_{\btp}\abs{\dslam \Psi_{2,N}}^2 dxdz =\norm{h_N}^2_{\dot{H}^{\frac{1-a}{2-a}}(\Omega)}.
\]
Using the fact that the $\lbrace\psi_i\rbrace_{i}$ are orthogonal for the semi-inner product,
\begin{align*}
&\qquad \iint_{\btp}\left( \dslam \Psi_{1,N}\dslam \Psi_{2,N}\right)dxdz\\
=&\sum_{i=1}^N\sum_{i=1}^N\iint_{\btp} \theta_i  k_i^{\frac{1-a}{2(2-a)}} h_jk_j^{-\frac{1-a}{2(2-a)}} \dslam \psi_i\dslam \psi_j,\\
=&\kappa\sum_{i=1}^N \theta_i h_i = \kappa \int_{\Omega}\theta_N h_N dx.
\end{align*}
Moreover, letting $N\rightarrow\infty$, $\Psi_{1,N},~\Psi_{2,N}$ converge uniformly to $\Psi_1:=E_1(\theta)$ and $\Psi_2:=E_2(h)$ in $\mathcal{H}$ due to the completeness of $\hh$.

To show the uniqueness of Dirichlet extension, let $\Psi_1$ and ${\Psi_1}'$ verifies $\Lapl\Psi_1=\Lapl{\Psi_1}'=0$ with lateral boundary condition (A) and $\gamma_0\Psi_1=\gamma_0{\Psi_1}'=h(x)$. Then
\begin{align*}
0=&\iint_{\btp}\Lapl(\Psi_1-{\Psi_1}')\cdot(\Psi_1-{\Psi_1}'),\\
=&\lim\limits_{N\rightarrow \infty}\iint_{\btp}\Lapl(\Psi_{1,N}-\Psi_{1',N}) (\Psi_{1,N}-\Psi_{1',N})dxdz,\\
=&-\lim\limits_{N\rightarrow \infty}\iint_{\btp}\abs{\dslam (\Psi_{1,N}-\Psi_{1',N})}^2 dxdz+\int_{\Omega}0\lim\limits_{z\rightarrow 0}(\Psi_{1,N}-\Psi_{1',N})dx,\\
=&-\norm{\Psi_1-{\Psi_1}'}_{\mathcal{H}}.
\end{align*}
Therefore, the extension operators $E_1$ and $E_2$ are well defined.
\end{proof}
\subsection{Singular Elliptic System}
In this section, we construct a solution to the elliptic system with the Neumann boundary condition $\glmd \Psi=0$ and lateral boundary conditions (A) and obtain its regularity. We will use this existence lemma to construct the interior solution in the bulk to \eqref{3DQG} for any time $t$ in section 5. In this part, $\Omega$ is either the $2D$ periodic domain $\ts$ or the bounded convex domain in $\rt$.

\begin{lemma}
\label{Lweightedexistence}

Let $\Omega=\ts$ or a bounded convex domain of $\rt$. We consider $\lambda=z^a$ for $a<1$, and $f\in L^2(\btp)$ with lateral boundary condition (A). Then there exists a unique solution $u\in \hhh$ to:
\begin{numcases}{}
-\Delta_\lambda u=f, & $z>0$, $\alpha\in\Omega$ \label{Einterior}\\
\glmd u =0. & ~
\end{numcases}

Moreover $\norm{\dslam u}_{L^2(\btp)}+\norm{\gradb{}\dslam u}_{L^2(\btp)}\leqslant\norm{f}_{L^2(\btp)}$. Hence we can define a linear bounded operator $G(f)=\dslam u$ mapping from $L^2(\btp)\rightarrow L^2(\btp)$ and this operator commutes with $\partial_t$ and $\gradb{}$ if $f\in C^1(C^\infty_x(\btp))$.  
\end{lemma}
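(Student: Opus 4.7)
The plan is to use Lax--Milgram on the Hilbert space $\hh$ for existence and uniqueness, then extract the higher regularity by diagonalizing in $x$ against the eigenbasis from \eqref{defeigenv} so that the problem collapses to a family of one-dimensional weighted ODEs. Concretely, on $\hh$ I would set up the bilinear form $B(u,v):=\int_{\btp}\dslam u\cdot\dslam v\,dx\,dz$ and the linear functional $L(v):=\int_{\btp}fv\,dx\,dz$. Coercivity and continuity of $B$ are immediate from the definition of $\|\cdot\|_\hh$ and its equivalence with $\|\dslam\cdot\|_{L^2}$ recorded right after the definition of $\hh$; continuity of $L$ follows from the Poincar\'e inequality $\|v\|_{L^2(\btp)}\leqslant C_p\|\dslam v\|_{L^2}$ noted there as well. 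Lax--Milgram then produces a unique $u\in\hh$ with $B(u,v)=L(v)$ for every $v\in\hh$. Taking $v\in\dc$ and integrating by parts identifies $u$ as the distributional solution of $-\Delta_\lambda u = f$ with the Neumann trace $\glmd u=0$ encoded weakly, so that $u\in\hhh$.

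For the regularity estimate, I would expand $u=\sum_i u_i(z)e_i(x)$ and $f=\sum_i f_i(z)e_i(x)$, reducing the PDE to the family of one-dimensional weighted Sturm--Liouville problems
\[
-\partial_z\!\bigl(\lambda \partial_z u_i\bigr)+k_i u_i=f_i\text{ on }(0,\infty),\qquad \lim_{z\to 0^+}\lambda\partial_z u_i=0.
\]
Testing each ODE against $k_i u_i$ and integrating on $(0,\infty)$ makes the boundary contribution at $z=0$ vanish by the Neumann condition and the one at $z=\infty$ vanish by the decay forced by $\dslam u\in L^2$. Summing the resulting identities over $i$ and using Parseval gives
\[
\|\gradb{}(\sqrt{\lambda}\partial_z u)\|_{L^2(\btp)}^2+\|\overline{\Delta} u\|_{L^2(\btp)}^2=\int_{\btp}f\,(-\overline{\Delta} u)\leqslant \|f\|_{L^2}\|\overline{\Delta} u\|_{L^2},
\]
and combining with $\|\gradb{}^{\,2} u\|_{L^2(\Omega)}\leqslant\|\overline{\Delta} u\|_{L^2(\Omega)}$ (exact on $\ts$ by Fourier, and on bounded convex $\Omega$ with Dirichlet data via the Grisvard/Kadlec inequality) yields the desired bound on $\|\gradb{}\dslam u\|_{L^2}$. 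Testing the same ODEs against $u_i$ and summing controls $\|\dslam u\|_{L^2}$ via another application of Poincar\'e.

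Linearity of $G(f):=\dslam u$ follows from uniqueness, and boundedness $L^2(\btp)\to L^2(\btp)$ from the previous step. The commutation $G(\partial_t f)=\partial_t G(f)$ and $G(\gradb{}f)=\gradb{}G(f)$ on inputs $f\in C^1(C^\infty_x(\btp))$ then follows because $\partial_t u$ and $\gradb{}u$ solve \eqref{Einterior} with data $\partial_t f$ and $\gradb{}f$ respectively: the Neumann condition commutes with $\partial_t$ and with tangential differentiation, and the lateral condition (A) is preserved trivially on the torus and by a tangential approximation near $\partial\Omega$ in the convex Dirichlet case. Uniqueness then forces the identities. The main subtlety is the degeneracy or singularity of $\lambda(z)=z^a$ at $z=0$, which is precisely what the mode-by-mode reduction is designed to neutralize: the Neumann condition enters the one-dimensional variational framework directly, so no delicate trace manipulation against the degenerate weight is required in the full three-dimensional integration by parts.
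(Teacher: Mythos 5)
Your proposal follows essentially the same route as the paper's second argument: Lax--Milgram on $\hh$, diagonalize against the eigenbasis $\{e_i\}$ from \eqref{defeigenv}, reduce to a family of weighted one-dimensional Sturm--Liouville problems, test each against $k_i u_i$ (equivalently, test the full equation against $-\overline{\Delta}u$), and sum. Your explicit appeal to the Grisvard/Kadlec inequality $\|\gradb{}^2 u\|_{L^2(\Omega)}\leqslant\|\overline{\Delta}u\|_{L^2(\Omega)}$ for convex $\Omega$ with Dirichlet data is exactly the ingredient the paper uses implicitly when it identifies the Hessian term with $\sum_j k_j^2\|\psi_j\|_{L^2}^2$.

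The one place where your write-up has a genuine gap is the justification of the ``summing over $i$ and using Parseval'' step. A priori Lax--Milgram only gives $\dslam u\in L^2(\btp)$; you do not yet know that $\overline{\Delta}u\in L^2(\btp)$, that the modal coefficients $u_i$ are regular enough in $z$ to integrate $\partial_z(\lambda\partial_z u_i)$ against $k_iu_i$, or that $\lambda\partial_z u_i\cdot k_i u_i\to 0$ as $z\to\infty$. Invoking the estimate you are trying to prove in order to license the termwise integration by parts is circular. The paper avoids this by first replacing $f$ with its finite-dimensional projection $f_N=\sum_{j\leqslant N}\varphi_j e_j$, solving the resulting finite family of one-dimensional ODEs (where the needed regularity of each $\psi_j$ comes cleanly from the one-dimensional Lax--Milgram), deriving the \emph{uniform} bound $\|\gradb{}\dslam u_N\|_{L^2}\leqslant\|f_N\|_{L^2}\leqslant\|f\|_{L^2}$, and then passing to the weak limit in $N$. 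Inserting that truncation--and--limit step makes your argument rigorous; as written the chain is not yet closed. You also bypass the paper's separate difference-quotient argument for the periodic case, but since the spectral argument covers both cases that is a stylistic, not substantive, difference.
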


\begin{proof}
Consider the following bi-linear form $B$ mapping from $\hh\times \hh \rightarrow \mathbb{F}$ and functional $F$ mapping from $\hh\rightarrow \mathbb{F}$,
\[
B(u,v)=\int_{\btp} \dslam u\dslam v dx dz, \qquad F(v)=\int_{\btp} fv dx dz.
\]
When we consider $\btp$, where $\Omega$ is $\ts$ or a bounded domain, we have that 
\[
\int_{\btp}\dslam v \cdot\dslam u\leqslant\norm{v}_{\mathcal{H}}\norm{u}_{\mathcal{H}},
\]
and 
\[
\int_{\btp}\dslam u \cdot\dslam u\geqslant\norm{u}^2_{\mathcal{H}}.
\]
Besides, we also have, 
\[
\abs{F(v)}\leqslant\norm{f}_{L^2}\norm{v}_{L^2}\leqslant\norm{f}_{L^2}\norm{v}_{\mathcal{H}}.
\]
Using the Lax-Milgram theorem, there exists a unique $u\in H$ to solve the variational problem:
\[
\int_{\btp}\dslam v \cdot\dslam u=\int_{\btp}f v,
\]
$\forall v \in \hh$ with $\norm{u}_{\hh}\leqslant\norm{f}_{L^2}$.

For all compactly supported test function $\phi\in C_c^\infty(\btp)$: $\phi\in \hh$ since
\[
\norm{\dslam \phi}_{L^2(\btp)}\leqslant\sup_{\supp \phi}\slam\norm{\nabla \phi}_{L^2(\btp)}.\]

Due to the Lax-Milgram theorem, there exists a unique $u\in \hh$ that solves the following equation:
\begin{equation}
\label{variational form}
B(u,\phi)=F(\phi).
\end{equation}
Therefore $u$ verifies the first equation \eqref{Einterior} in the sense of distribution. On the boundary, for any test function $\varphi\in C^\infty(\Omega)\cap\dot{H}^{\frac{1-a}{2-a}}$, using \cref{PExtensionLemma} there exists an extension $\tilde{\varphi}\in \hhh$ verifying:
\[
\begin{cases}
\Lapl \tilde{\varphi}=0& \text{ in }\btp,\\
\gamma_0\tilde{\varphi}(x)=\varphi(x) & \text{ at } z=0.
\end{cases}
\]
Therefore on the boundary $\lbrace z=0\rbrace\times \Omega$ and $\lbrace u_N\rbrace^\infty_{N=1}$ a sequence of functions in space $\ddd$ that converges to $u$ in space $\hhh$,
\[
\int_{\Omega} \gamma_{\lambda} u_N \varphi(x) = \int_{\btp}\nabla_\lambda u_N \nabla\tilde{\varphi}-\int_{\btp}-\Lapl u_N \tilde{\varphi}.
\]
Using equation \eqref{variational form}, the right-hand side converges to $0$ as $N\rightarrow \infty$. And the left-hand side converges to $\int_{\Omega}\glmd u \phi(x) dx$ since:
\[
\begin{array}{l}
\qquad\abs{\int_{\Omega}\glmd u  \varphi(x) dx- \gamma_{\lambda} u_N \varphi (x) }\leqslant\int_{\Omega}\abs{(\glmd u-\glmd u_N)\varphi} dx\\~\\
\leqslant \norm{\glmd u-\glmd u_N}_{\dot{H}^{-\frac{1-a}{2-a}}(\Omega)}\norm{\varphi}_{\dot{H}^{\frac{1-a}{2-a}}(\Omega)}\rightarrow 0,
\end{array}
\]
while $u_N$ converges to $u$ in $\hhh$. \\

Thus, there exists a unique $u\in \hh$ that solves the following system,
\[
\begin{cases}
\Lapl u=f, &\text{ in }\btp, \\
\glmd u =0. & \text{ on } \lbrace z=0\rbrace\times \Omega
\end{cases}
\]
due to Lax-Milgram. Moreover, $\norm{u}_{\mathcal{H}}\leqslant\norm{f}_{L^2(\btp)}$.\\

To obtain more regularity for the periodic case, let $v=-D^h_i D^h_i u$, where $D^h_i u=\dfrac{u(x+h\cdot e_i)-u(x)}{h}$, $k=1,2$ and $e_1$, $e_2$ are the unit vectors of the horizontal direction,
\[
\begin{array}{l}
~~~~-\int_{\ttp}\dslam u\dslam v\\
=-\int_{\ttp}\dslam u \dslam(D^{-h}_i D^h_i u)\\
=\int_{\ttp} D^h_i(\dslam u)\dslam(D^h_i u)\\
=\int_{\ttp} \abs{D^h_i(\dslam u)}^2.
\end{array}
\]
Thus we have
\[
\int_{\ttp}\abs{D^h_i(\dslam u)}^2\leqslant \int_{\ttp} fv,
\]
while
\[
\begin{array}{rl}
\abs{\int_{\ttp} fv} & \leqslant \int_{\ttp}\abs{f D^{-h}_i D^h_i u}\\
& \leqslant \norm{f}_{L^2}\norm{D^h_i\gradb{} u}_{L^2}
\end{array}
\]
Thus $\norm{D^h_i (\dslam u)}_{L^2}\leqslant \norm{f}_{L^2}$ uniform for all $h$. Thus, we have $\norm{\gradb{}\dslam u}_{L^2}\leqslant \norm{f}_{L^2}$.

The proof above only works for the periodic cases. We present now an alternative proof that works also for the bounded case. The proof is split into several steps.
\begin{steps}
\item We prove that for any $u\in H^1(\Omega)$, we have that
\[
\int_\Omega \abs{\nabla u}^2 =\int_{\Omega}\abs{\Lambda u}^2,
\]
where $\Lambda u\coloneqq(-\overline{\Delta})^{1/2}u =\sum_{i=1}^\infty u_i\sqrt{k_i}e_i$, where $u=\sum_{i=1}^\infty u_ie_i$. Here $\lbrace e_i\rbrace_{i=1}^\infty$ are eigenfunctions defined in \eqref{defeigenv}. Let $e_i$ and $e_j$ be two eigenfunctions and note that these functions are smooth in the interior of $\Omega$ and equipped with either periodic or Dirichlet boundary, so we can apply the divergence theorem on $\Omega$ and find 
\[
\int_{\Omega} \overline{\nabla} e_{i} \cdot \overline{\nabla} e_{j}=-\int_{\Omega} e_{i} \overline{\Delta} e_{j}=k_{j} \int_{\Omega} e_{i} e_{j}=k_{j} \delta_{ij}.
\]
Consider a function $u=\sum u_{i} e_{i}$ which is an element of $H^{1}$, by which we mean $\sum k_{i} u_{i}^{2}<\infty$. Since $\left\|\nabla e_{i}\right\|_{L^{2}(\Omega)}=\sqrt{k_{i}}$, the following sums all converge in $L^{2}(\Omega)$ and hence the calculation is justified:

\begin{align}
\int_{\Omega}|\overline{\nabla}  u|^{2} &=\int_{\Omega}\left(\sum_{i} u_{i} \overline{\nabla}  e_{i}\right)\left(\sum_{j} u_{j} \overline{\nabla}  e_{j}\right),\nonumber  \\
&=\int_{\Omega} \sum_{i, j}\left(u_{i} u_{j}\right) \overline{\nabla}  e_{i} \cdot \overline{\nabla}  e_{j}, \nonumber \\
&=\sum_{i, j}\left(u_{i} u_{j}\right) \int_{\Omega} \overline{\nabla}  e_{i} \cdot \overline{\nabla}  e_{j}, \nonumber \\
&=\sum_{j} k_{j} u_{j}^{2}=\int_{\Omega}\abs{\Lambda u}^2. \label{changelambdatogradient}
\end{align}

\item 
For fixed $N$ and any $f\in L^2$, let $f_N=\sum_{j=1}^N\varphi_j(z)e_j(x)\in L^2$ where $ \varphi_j(z)=\int_{\btp}f(z,x)e_j(x) dx$. We will show that the solution $u_N$ to:
\begin{numcases}{}
-\Delta_\lambda u_N=f_N & $z>0$ \label{EPinterior}\\
\glmd u_N =0 & ~	\label{EPboundary}
\end{numcases}
exist. Moreover $\norm{\overline{\nabla} \dslam u_N}^2_{L^2(\btp)}\leqslant\norm{f_N}^2_{L^2}\leqslant\norm{f}^2_{L^2}$ for all $N$.\\

Note that for each $j=1,..., N$, 
\[
\Lapl e_j=\overline{\Delta} e_j= k_j e_j.
\]
The solution $u_N$ can be written as $u_N=\sum_{j=1}^N \psi_j(z)e_j(x)$ and for each $j=1,..., N$, where $\psi_j(z)$ solves the following ODE,
\begin{equation}
\label{projectedode}
\partial_z\lambda\partial_z\psi_j(z)+k_j\psi_j(z)=\phi_j(z),
\end{equation}
with boundary condition $\glmd \psi_j(z)=0$.
Lax-Milgram theorem ensures the existence and uniqueness of $\psi_i$ for $j=1,..., N$ solution to \eqref{projectedode}. Moreover, $\Psi_j$ verifies 
\[
\sqrt{\lambda}\psi_j'(z)\in L^2([0,\infty)), ~~\sqrt{k_j}\psi_j(z)\in L^2([0,\infty)).
\]
Applying $\Lambda$ to \eqref{EPinterior}, we have 
\begin{align*}
\Lambda\Lapl u_N&=\Lambda f_n.\\
\text{i.e. for each }j\quad\left(\partial_z\lambda\partial_z\psi_j(z)+k_j\psi_j(z)\right)\sqrt{k_j}~ e_j(x)&=\varphi_j(z)\sqrt{k_j}~ e_j(x).
\end{align*}

Let us multiply this equation by $\psi_j(z)\sqrt{k_j}e_j$ and integrate on $\btp$. After integration by parts, we have:
\begin{align*}
\int_0^\infty\left(\partial_z\lambda\partial_z\psi_j(z)+k_j\psi_j(z)\right)\psi_j(z) dz \int_{\Omega} \sqrt{k_j}~ e_j(x)\sqrt{k_j}~ e_j(x)dx\\=\int_0^\infty\varphi_j(z)\psi_j(z) dz\int_{\Omega}\sqrt{k_j}~ e_j(x)\sqrt{k_j}~ e_j(x),\\
\end{align*}

\begin{align*}
-\int_0^\infty\left(\sqrt{\lambda}\partial_z\psi_j(z)\right)^2\cdot k_j\cdot\int_{\Omega} e_j(x)^2dx -\int_0^\infty\psi_j(z)^2 dz \int_{\Omega} (\gradb{}\Lambda e_j(x))^2dx \\
=\int_0^\infty\varphi_j(z)\psi_j(z) dz\cdot\int_{\Omega} e_j(x)(-\overline{\Delta})e_j(x) dx.
\end{align*}
Accordingly,
\begin{align*}
&\qquad\norm{\sqrt{\lambda}\partial_z\psi_j(z)}_{L^2}^2\norm{\Lambda e_j}^2_{L^2(\Omega}+\norm{\psi_j}_{L^2}^2\norm{\gradb{} \Lambda e_j(x)}_{L^2(\Omega)}^2 \\
&= \abs{\int_{\btp}\left(\varphi_j(z)e_j(x)\right)\cdot\left(\psi_j(z) (-\overline{\Delta})e_j(x)\right) dxdz}. 
\end{align*}
Especially,
\begin{align*}
&\norm{\psi_j}_{L^2}^2\norm{(-\overline{\Delta}) e_j(x)}_{L^2(\Omega)}^2  \leqslant \abs{\int_{\btp}\left(\varphi_j(z)e_j(x)\right)\cdot\left(\psi_j(z) (-\overline{\Delta})e_j(x)\right) dxdz} ,\\
&\qquad \leqslant \frac{1}{2}\norm{\psi_j}_{L^2}^2\norm{(-\overline{\Delta}) e_j(x)}_{L^2(\Omega)}^2  +\frac{1}{2}\int_0^\infty\left(\varphi_j(z)e_j(x)\right)^2.
\end{align*}

Therefore, for each $j$,
\begin{align*}
&\qquad\norm{\psi_j}_{L^2}^2\norm{(-\overline{\Delta}) e_j(x)}_{L^2(\Omega)}^2  +\int_0^\infty\left(\varphi_j(z)e_j(x)\right)^2\\
\geqslant & \abs{\int_{\btp}\left(\varphi_j(z)e_j(x)\right)\cdot\left(\psi_j(z) (-\overline{\Delta})e_j(x)\right) dxdz},\\
\geqslant & \norm{\sqrt{\lambda}\partial_z\psi_j(z)}_{L^2}^2\norm{\Lambda e_j}^2_{L^2(\Omega}+\norm{\psi_j}_{L^2}^2\norm{\gradb{} \Lambda e_j(x)}_{L^2(\Omega)}^2 \\
&+\norm{\psi_j}_{L^2}^2\norm{(-\overline{\Delta}) e_j(x)}_{L^2(\Omega)}^2,
\end{align*}
Therefore,
\begin{align*}
\int_0^\infty\left(\varphi_j(z)e_j(x)\right)^2 & \geqslant \norm{\sqrt{\lambda}\partial_z\psi_j(z)}_{L^2}^2\norm{\Lambda e_j}^2_{L^2(\Omega}+\norm{\psi_j}_{L^2}^2\norm{\gradb{} \Lambda e_j(x)}_{L^2(\Omega)}^2,\\
&=\norm{\dslam \Lambda(\psi_j(z)e_j(x))}_{L^2(\btp)}.
\end{align*}
Adding up all the inequalities for $j=1,2,...,N$,
\begin{equation}
\norm{f_N}^2_{L^2}\geqslant \norm{\overline{\nabla} \dslam u_N}^2_{L^2(\btp)}.
\label{ProjRegularity}
\end{equation}

Note that $f_N$ converges to $f$ strongly in $L^2$, and using Lax-Milgram, for all $N$, $\dslam u_N$ converges weakly in $L^2$. Therefore $u_N$ also converges weakly to some $u$ in $\hh$. Then in distribution sense $\gradb{}\dslam u_N$ converges to $\gradb{}\dslam u$. So using \eqref{ProjRegularity} we get $\norm{\gradb{}\dslam u}_{L^2(\btp)}\leqslant \norm{f}_{L^2(\btp)}$.
\end{steps}

We have already shown that the operator $G$ is linear bounded and well-defined. If $f\in C^1_t(C^\infty_x(\btp))$, there exist a solution $\tilde{\Psi}\in L^2(\btp)$ such that $\dslam\tilde{\Psi}=G(\partial_t f)$ and $\tilde{\Psi}$. Therefore, $\partial_t\Lapl\Psi=\Lapl\partial_t\Psi=\Lapl\tilde{\Psi}=\partial_t f$, i.e. $\partial_t \dslam  \Psi=G(\partial_t f)$. Similarly, $\partial_{x_i}\dslam  \Psi=G(\partial_{x_i} f)$, for $i=1,2$ and $(x_1,x_2)\in \Omega$.
\end{proof}

\subsection{Trace propositions for a family of weighted operators}
We obtain the following lemmas when considering the lateral boundary condition (A). We will use the extension on $\Omega$ to develop our trace propositions in weighted spaces on weighted operators. Furthermore, we will use trace propositions to specify the impact of the system's interior on the boundary. We first begin with a result for the density  of continuous functions in the spaces $\hh$ and $\hhh$.
\begin{lemma}\label{density}
The set $\mathcal{D}_0$ is dense in $\mathcal{H}$ and the set $\mathcal{D}_\lambda$ is dense in $\mathcal{H}^2$.
\end{lemma}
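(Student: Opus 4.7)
The approach is to use the Galerkin projection $\PP{N}$ in the $x$-variable and show that (i) $\PP{N}u$ converges to $u$ in the appropriate space, and (ii) for every $N$, the truncation $\PP{N}u$ already lies in the target dense set. No $z$-mollification should be needed: the $z$-dependent Fourier coefficients in the eigenbasis $\{e_i\}$ are automatically continuous up to $z=0$, as soon as $a<1$. Smoothing only in $x$ sidesteps the degeneration of $\lambda=z^a$ at the boundary, which would otherwise obstruct any naive mollification in $z$.

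For $\dc$ dense in $\hh$, write $\PP{N}\Psi = \sum_{i=1}^N \psi_i(z)e_i(x)$ with $\psi_i(z) = \int_\Omega \Psi(z,x)e_i(x)\,dx$. Since each $e_i$ satisfies the lateral boundary condition (A), so does $\PP{N}\Psi$. Convergence in $\hh$ reduces to three Parseval-type identities: $\|\Psi-\PP{N}\Psi\|_{L^2}^2 = \int_0^\infty\sum_{i>N}|\psi_i|^2$, $\|\sqrt{\lambda}\partial_z(\Psi-\PP{N}\Psi)\|_{L^2}^2 = \int_0^\infty\lambda\sum_{i>N}|\psi_i'|^2$, and (via \eqref{changelambdatogradient}) $\|\gradb{}(\Psi-\PP{N}\Psi)\|_{L^2}^2 = \int_0^\infty\sum_{i>N}k_i|\psi_i|^2$, each tending to $0$ by dominated convergence. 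Since $\PP{N}\Psi$ is a finite sum of $C^\infty(\Omega)$-eigenfunctions, membership in $\dc$ reduces to the continuity of every $\psi_i$ on $[0,\infty)$. From $\sqrt{\lambda}\psi_i'\in L^2(\mathbb{R}_+)$ and Cauchy--Schwarz,
\[
|\psi_i(z_1)-\psi_i(z_2)| \;\leq\; \Bigl(\int_{z_1}^{z_2}z^{-a}\,dz\Bigr)^{1/2}\|\sqrt{\lambda}\psi_i'\|_{L^2(z_1,z_2)},
\]
and $\int_{z_1}^{z_2}z^{-a}\,dz = (z_2^{1-a}-z_1^{1-a})/(1-a)\to 0$ as $z_1,z_2$ approach a common limit (including $0$), precisely because $a<1$. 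Hence $\psi_i$ admits a H\"older-continuous representative up to $z=0$, and $\PP{N}\Psi\in\dc$.

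For $\ddd$ dense in $\hhh$, take the same $u^N:=\PP{N}u$. The algebraic key is that $\Lapl$ commutes with $\PP{N}$: since $\overline{\Delta}e_i=-k_i e_i$, one has $\Lapl u^N = \sum_{i\leq N}(\partial_z(\lambda u_i') - k_i u_i)e_i = \PP{N}\Lapl u$, hence $\Lapl u^N\to\Lapl u$ in $L^2$, and combined with the previous step $u^N\to u$ in $\hhh$. To verify $u^N\in\ddd$ it suffices to prove $v_i(z):=\lambda(z)u_i'(z)$ is continuous on $[0,\infty)$. Away from $z=0$, $\lambda$ is bounded above and below, so $v_i \in L^2_{\mathrm{loc}}((0,\infty))$; testing $\Lapl u\in L^2$ against $e_i$ yields $v_i' = (\Lapl u)_i + k_i u_i \in L^2(\mathbb{R}_+)$, so $v_i\in H^1_{\mathrm{loc}}((0,\infty))\hookrightarrow C^0((0,\infty))$. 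Since $v_i'\in L^2(\mathbb{R}_+)\subset L^1(0,1)$, the limit $\lim_{z\to 0^+}v_i(z)$ exists and $v_i$ extends continuously to $[0,\infty)$. Therefore $\lambda\partial_z u^N = \sum_{i\leq N} v_i(z)e_i(x)\in C^0(\mathbb{R}_+;C^\infty(\Omega))$.

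The main obstacle throughout is the singular/degenerate behavior at $z=0$. A direct mollification in $z$ would produce a commutator with the weight $\lambda$ that cannot be controlled without additional information; the chosen strategy of Galerkin truncation in $x$ leaves $\lambda$ untouched and reduces everything to a one-dimensional weighted-Sobolev regularity statement for $\psi_i$ and $\lambda u_i'$. The crucial analytic ingredient is the local integrability of $1/\lambda = z^{-a}$ near zero, which is exactly the assumption $a<1$; without it, the above H\"older estimate for $\psi_i$ would fail and $\dc$ could not be expected to be dense in $\hh$.
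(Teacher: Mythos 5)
Your proof is correct and follows essentially the same route as the paper: Galerkin truncation in $x$ using the eigenbasis $\{e_i\}$, dominated convergence for the $\hh$/$\hhh$ norms, and the one-dimensional Cauchy--Schwarz estimate exploiting $\int z^{-a}\,dz<\infty$ (i.e.\ $a<1$) for continuity of $\psi_i$ up to $z=0$. The only genuine improvement over the paper's version is in the second part: where the paper appeals somewhat opaquely to an ``embedding theorem'' after bounding $\sum_{i\leq N}\|\zlz\alpha_i\|_{L^2}^2$, you identify $v_i'=(\Lapl u)_i+k_iu_i\in L^2(\mathbb{R}_+)$ explicitly and invoke $L^1$-integrability of $v_i'$ near $0$ to get the limit, which is the same fact made cleaner.
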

\begin{proof}
\begin{enumerate}
\item
Consider a function $v\in \hh$. Using the eigenfunctions defined in \cref{defeigenv}, we have that $v(z,x)=\sum\limits^\infty_{i=1}\alpha_i(z) e_i(x),$ while $\alpha_i(z)=\int_{\Omega} v e_i dx$. Therefore,
\begin{align*}
\norm{\dslam v}^2_{L^2} =\sum^{+\infty}_{k=1} k_i \int_{0}^\infty \alpha^2_i(z) dz+\sum^{+\infty}_{k=1}\int_0^{\infty} \lambda(z)\abs{\partial_z\alpha_i}^2 dz.
\end{align*}
Given a positive integer $N$, for each $z$ fixed, since $v$ in $L^2(\btp)$, we define a function $v_N$ in $L^2_z(\mathbb{R}_{+} C^\infty(\Omega))$,
\begin{align*}
v_N(z,x)=\sum_{i=1}^N\alpha_i(z) e_i(x).
\end{align*}
By definition, for each $z>0$,
\begin{align*}
&\quad\norm{v-v_N}^2_{L^2(\Omega)}+\norm{\dslam(v-v_N) }^2_{L^2(\Omega)}\\
&=\sum_{k=N+1}^{+\infty} \alpha_i^2+ \sum_{k=N+1}^{+\infty} k_i \alpha_i^2(z) +\sum_{k=N+1}^{+\infty}  \lambda(z)(\partial_z \alpha_i)^2 
\end{align*}
As $N\rightarrow \infty$, using Lebesgue dominate theorem,
\[
\norm{v-v_N}_{L^2(\btp)}+\norm{\dslam(v-v_N) }_{L^2(\btp)}\rightarrow 0.\]
In other words, $\norm{v-v_N}_{\hh}\rightarrow 0$. For each $\alpha_i$, $i\leqslant N$, for any $z,z_0>0$, as $z\rightarrow z_0$,
\begin{align*}
\abs{\alpha_i(z)-\alpha_i(z_0)} & \leqslant\abs{ \int_{z_0}^z \abs{\partial_z \alpha(\zeta)}d \zeta},\\
&=\abs{\int_{z_0}^z \abs{\partial_z \int_{\Omega}v(\zeta,x) e_i(x) dx }d \zeta},\\
&=\abs{\int_{z_0}^z \frac{1}{\sqrt{\lambda}}\abs{ \int_{\Omega}\sqrt{\lambda}\partial_z v(\zeta,x) e_i(x) dx }d \zeta},\\
&\leqslant \abs{\int_{z_0}^z \frac{1}{\sqrt{\lambda}}\norm{\sqrt{\lambda}\partial_z v(\zeta,x)}_{L^2(\Omega)} d \zeta},\\
&\leqslant \norm{\sqrt{\lambda}\partial_z v(\zeta,x)}_{L^2((z_0,z)\times\Omega)}\abs{\dfrac{z^{1-a}-z_0^{1-a}}{1-a}}^{1/2}\\
&\leqslant C\norm{\dslam v}_{L^2(\btp)}\abs{z^{1-a}-z_0^{1-a}}^{1/2}\rightarrow 0.
\end{align*}
Therefore, $\alpha_i$ is continuous for each $i$. Hence $v_N$ is $C^0_z(\mathbb{R}_+;(L^2(\Omega))$. Let $z_0=0$, we find that $\lim\limits_{z\rightarrow 0}\alpha_i(z)=\alpha_i(0)$, i.e, $v_N\in \dc$. Therefore, we show that $\dc$ is dense in $\mathcal{H}$.\\

\item
Similarly to 1., we first consider $u\in \hhh$, letting $u_N=\sum_{k=1}^N \alpha_i(z) e_i$, where $\alpha_i(z)=\int_{\Omega} u(z,x)e_i(x)dx$. For fixed $\varepsilon$, we claim that we can always find a $N$ big enough such that $\norm{\Lapl (u-u_N)}_{L^2}+\norm{\dslam(u-u_N)}_{L^2}<\varepsilon$. We have proved the convergence of the second part in 1. For fixed $z$,using Lebesgue dominated convergence theorem and $\norm{\Lapl u}^2_{L^2(\btp)}+\norm{\Lapl u_N}^2_{L^2(\btp)}<\infty$, we have that as $N\rightarrow \infty$:
\[
\norm{\Lapl(u-u_N)}^2_{L^2(\Omega)}=\norm{\sum_{i=N+1}^\infty\Lapl(\alpha_i(z) e_i(x))}_{L^2(\Omega)}\rightarrow 0.
\]
Further by Lebesgue dominant theorem, we have $\norm{\Lapl(u-u_N)}^2_{L^2(\btp)}\rightarrow 0$. Since $u_N=\sum_{k=1}^N \alpha_i(z) e_i$
\begin{align*}
\norm{\Lapl u_N}^2_{L^2(\btp)}&=\sum_{k=1}^N\int_{0}^\infty\left( \alpha_i(z)k_i+\zlz \alpha_i(z)\right)^2dz\\
&\geqslant \sum_{k=1}^N\int_{0}^\infty\left( -(\alpha_i(x)k_i)^2+\frac{1}{2}(\zlz \alpha_i)^2\right) dz	\end{align*}
Where $ \sum_{k=1}^N \int_{0}^\infty (\alpha_i(x)k_i)^2 dz =\norm{\overline{\Delta}u_N}_{L^2(\btp)}$ is finite. Therefore, the summation $\sum_{k=1}^N\int_{0}^\infty (\zlz \alpha_i)^2 dz	$ is finite as well. Hence, using embedding theorem, for each $i$, $\lambda\partial_z\alpha_i$ lies in the space $C^0_z(0,\infty)$. $u_N$ is therefore in the set $\ddd$ and $\ddd$ is dense in $\hhh$.
\end{enumerate}

\end{proof}
Using the smooth functions in set $\dc$ and $\ddd$, we can show that the Dirichlet extension operator minimizes the energy in the following lemma.
\begin{lemma}
\label{energy minimizer}
Given $f\in H^{\frac{1-a}{2-a}}(\Omega)$, let $u=E_2(f)$ the Dirichlet extension of $f$. Then $u$ minimizes the Dirichlet energy
\[
E(w)=\int_{\Omega\times \mathbb{R}_{+}}\abs{\dslam w}^2dxdz,\]
among all $w\in \hh$ with $\gamma_0 w=f(x)$ verifying the lateral boundary condition (A).
\end{lemma}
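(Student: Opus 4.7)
The plan is the classical convex-variational argument: $E$ is a non-negative quadratic form, so once $u=E_2(f)$ is shown to be a critical point among admissible competitors, minimality follows automatically. Given any $w\in\hh$ satisfying (A) with $\gamma_0 w=f$, set $v:=w-u\in\hh$; then $v$ satisfies (A) and $\gamma_0 v=0$. Expanding the quadratic form,
\[
E(w)=E(u)+2\int_{\btp}\dslam u\cdot\dslam v\,dx\,dz+E(v),
\]
and since $E(v)\geqslant 0$, the proof reduces to showing that the cross term vanishes.

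To kill the cross term I will use the eigenfunction representation in \cref{PExtensionLemma}: $u$ is the $\hh$-limit of $u_N=\sum_{i=1}^N h_i\psi_i$, where each $\psi_i\in\ddd$ satisfies $\Lapl\psi_i=0$ by \cref{ExtensionLemma}. For such a smooth $\psi_i$ paired against $v$, a weighted Green's identity should give
\[
\int_{\btp}\dslam\psi_i\cdot\dslam v\,dx\,dz=-\int_{\btp}\Lapl\psi_i\cdot v\,dx\,dz+\int_\Omega\gamma_\lambda\psi_i\cdot\gamma_0 v\,dx=0,
\]
because $\Lapl\psi_i=0$ and $\gamma_0 v=0$. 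Summing in $i$, linearity yields $\int_{\btp}\dslam u_N\cdot\dslam v=0$, and passing to the limit $N\to\infty$ (using $u_N\to u$ in $\hh$, hence $\dslam u_N\to\dslam u$ in $L^2(\btp)$) gives $\int_{\btp}\dslam u\cdot\dslam v=0$, concluding $E(w)\geqslant E(u)$.

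The main technical obstacle will be rigorously justifying this Green's identity, which requires carefully handling the degenerate weight near $z=0$ and the decay at infinity. At $z\to+\infty$, the super-solution bound $W(w)\leqslant w^{-\delta}$ from \cref{ExtensionLemma} together with $\dslam\psi_i\in L^2(\btp)$ eliminates the boundary contribution. At $z=0$, the surface integral is interpreted as the duality pairing $\langle\gamma_\lambda\psi_i,\gamma_0 v\rangle$ between $\dot{H}^{-(1-a)/(2-a)}(\Omega)$ and $\dot{H}^{(1-a)/(2-a)}(\Omega)$, which by \eqref{eigenNuemann} is a constant multiple of $k_i^{(1-a)/(2(2-a))}\int_\Omega e_i\,\gamma_0 v\,dx$; to legitimize this rigorously I would approximate $v$ by $v_n\in\dc$ via \cref{density}, integrate by parts against the explicit smooth $\psi_i$ (for which $\psi_i$ and $\lambda\partial_z\psi_i$ are continuous up to $z=0$), and pass to the limit using the bounded trace operator $\gamma_0:\hh\to\dot{H}^{(1-a)/(2-a)}(\Omega)$ supplied by the trace proposition developed earlier in this subsection.
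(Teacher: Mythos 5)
Your proof is the same argument the paper uses: expand $E(w)=E(u)+2\int_{\btp}\dslam u\cdot\dslam v+E(v)$ with $v=w-u$, drop $E(v)\geqslant 0$, and kill the cross term by integrating by parts using $\Lapl u=0$ and $\gamma_0 v=0$. The only organizational difference is that the paper takes the perturbation $\phi\in\dc$ (smooth in $x$, continuous in $z$) and integrates by parts against $u$, while you approximate $u$ itself by the eigenfunction partial sums $u_N=\sum_{i\leqslant N}h_i\psi_i$ from \cref{PExtensionLemma}, integrate by parts against the explicit $\psi_i$, and pass to the limit in $\hh$. Your variant is sound and arguably makes the delicate step near $z=0$ more transparent, since $\glmd\psi_i$ is known exactly by \eqref{eigenNuemann}.

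One thing to correct. In your final paragraph you propose to pass to the limit ``using the bounded trace operator $\gamma_0:\hh\to\dot{H}^{(1-a)/(2-a)}(\Omega)$ supplied by the trace proposition.'' That is \cref{trace}(1), and the paper proves \cref{trace}(1) \emph{by invoking this very lemma}, so quoting it here would be circular. It is also unnecessary: once you approximate $v$ by the partial sums $v_n\in\dc$ of \cref{density}, those partial sums automatically satisfy $\gamma_0 v_n=0$ whenever $\gamma_0 v=0$, and for each smooth $\psi_i$ the boundary pairing at $z=0$ is simply the $L^2(\Omega)$ integral $C_a\,k_i^{(1-a)/(2(2-a))}\int_\Omega e_i\,\gamma_0 v_n\,dx=0$, so no fractional trace bound is needed. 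You should also tighten the $z\to\infty$ step: the polynomial super-solution bound $W\leqslant w^{-\delta}$ alone does not finish the job, but $W'(w)\to 0$ (which follows from $W$ convex and decreasing to $0$) together with $v\in L^2(\btp)$, hence $\|v(R,\cdot)\|_{L^2(\Omega)}\to 0$ along a sequence $R_j\to\infty$, does eliminate the far boundary term.
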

\begin{proof}
Let us first consider the periodic boundary condition. Let $\phi$ be a function in $\dc$ with $\gamma_0\phi=0$, since $\phi+u\in \hh$,
\[
\begin{array}{l}
E(u+\phi)-E(u)=\int_{\Omega\times\mathbb{R}_{+}}\dslam u\dslam \phi dxdz+\int_{\btp}\dslam \phi\dslam \phi dxdz,\\~\\

\qquad \geqslant -\int_{\Omega\times\mathbb{R}_{+}}\Lapl u\phi dxdz +\int_{\Omega\times\lbrace z=0\rbrace}\glmd u\phi dx.
\end{array}
\]

The last term vanishes since $\gamma_0\phi=0$.\\

For $\phi\in \dc$ and $\gamma_0\phi=0$  in the bounded case, 
\[
\begin{array}{l}
E(u+\phi)-E(u)=\int_{\Omega\times\mathbb{R}_{+}}\dslam u\dslam \phi dxdz+\int_{\btp}\dslam \phi\dslam \phi dxdz,\\~\\

\qquad \geqslant -\int_{\Omega\times\mathbb{R}_{+}}\Lapl u\phi dxdz +\int_{\partial\Omega\times \lbrace z>0\rbrace}\nu_s\cdot\gradb{} u\phi +\int_{\Omega\times\lbrace z=0\rbrace}\glmd u\phi dx.
\end{array}
\]
The last two terms vanish following the lateral boundary condition and $\gamma_0\phi=0$.
\end{proof}

\begin{proposition}
\label{trace}
\begin{enumerate}
\item
 $\gamma_0$ is a bounded operator mapping from the space $\hh$ to the space $\dot{H}^{\frac{1-a}{2-a}}(\Omega)$ with $\left\|\gamma_{0} u\right\|_{\dot{H}^{\frac{1-a }{ 2-a}}\left(\Omega\right)} \leqslant\|\nabla_{\slam} u\|_{L^{2}\left(\btp\right)}$
for $u $ such that $\|\nabla_{\slam} u\|_{L^{2}\left(\btp\right)}<\infty$,
where $\gamma_{0} u(x)=u(0, x)$ for $x \in \Omega$.

\item  $\gamma_{\lambda}$ is a bounded operator mapping from the space $\hhh$ to the space $\dot{H}^{-\frac{1-a}{2-a}}(\Omega)$ with  $\quad\left\|\gamma_{\lambda} u \right\|_{\dot{H}^{-\frac{1-a}{ 2-a}}\left(\Omega\right)} \leqslant\norm{\dslam u}_{[L^2(\btp)]^3}+\norm{\Lapl u}_{L^2(\btp)}$.

\item If $u\in \hh$ and $v\in \hhh$, then the following integration by parts is obtained:
\begin{equation}
\label{IBP}
\int_{\btp} \dslam u \cdot \dslam v dx dz =\int_{\btp} u\cdot(-\Lapl v)dx dz+ \int_{\btpz}\gamma_{\lambda} v \cdot\gamma_0 u dx,
\end{equation}
where $\gamma_{\lambda} v\in \dot{H}^{-\frac{1-a}{2-a}}$ and $\gamma_0 u\in  \dot{H}^{\frac{1-a}{2-a}}$ lies in the dual spaces. 
\end{enumerate}
\end{proposition}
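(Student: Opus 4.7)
The plan is to reduce the three claims to the dense subsets $\dc \subset \hh$ and $\ddd \subset \hhh$ provided by Lemma \ref{density}, exploit the Dirichlet extension operator $E_2$ together with the energy minimization Lemma \ref{energy minimizer}, and then extend by density and continuity.

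For (1), I work first with $u \in \dc$, where the trace $\gamma_0 u = u(0,\cdot)$ is defined pointwise. I form $\Phi := E_2(\gamma_0 u) \in \hh$ from Proposition \ref{PExtensionLemma}: both $\Phi$ and $u$ share the same trace at $z = 0$ and satisfy the lateral condition (A), so Lemma \ref{energy minimizer} yields $\norm{\dslam \Phi}_{L^2(\btp)} \leqslant \norm{\dslam u}_{L^2(\btp)}$, while Proposition \ref{PExtensionLemma} identifies $\norm{\dslam \Phi}_{L^2(\btp)}$ with $\norm{\gamma_0 u}_{\dot{H}^{\frac{1-a}{2-a}}(\Omega)}$. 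Chaining the two gives the desired bound on $\dc$; density and completeness of $\dot{H}^{\frac{1-a}{2-a}}$ extend it to all of $\hh$.

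For (2), I proceed by duality. Fix $u \in \ddd$ and a test $\varphi \in C^\infty(\Omega)$ satisfying (A), and set $\tilde{\varphi} := E_2(\varphi) \in \hh$. Applying the classical divergence theorem to the smooth objects on $\btp$, with lateral-boundary terms vanishing by (A), yields
\begin{equation*}
\int_{\Omega} \glmd u \cdot \varphi \, dx \;=\; \int_{\btp} \dslam u \cdot \dslam \tilde{\varphi} \, dx\,dz \;+\; \int_{\btp} \tilde{\varphi}\, \Lapl u \, dx\,dz.
\end{equation*}
Cauchy--Schwarz and the extension identity $\norm{\dslam \tilde{\varphi}}_{L^2(\btp)} = \norm{\varphi}_{\dot{H}^{\frac{1-a}{2-a}}(\Omega)}$ bound the first right-hand term by $\norm{\dslam u}_{L^2(\btp)} \norm{\varphi}_{\dot{H}^{\frac{1-a}{2-a}}(\Omega)}$; for the second, I invoke the Poincar\'e bound $\norm{\tilde{\varphi}}_{L^2(\btp)} \lesssim \norm{\dslam \tilde{\varphi}}_{L^2(\btp)}$ recorded right after the definition of $\hh$. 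Taking the supremum over unit-norm $\varphi$ and extending by density of $\ddd$ in $\hhh$ proves (2), up to the absolute Poincar\'e constant.

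For (3), the same divergence theorem applied to $(u,v) \in \dc \times \ddd$ produces the identity pointwise, and each term then extends by continuity to $\hh \times \hhh$: the left-hand side by Cauchy--Schwarz, the first right-hand term by the $L^2 \times L^2$ pairing (note $u \in L^2 \subset \hh$ and $\Lapl v \in L^2$), and the boundary pairing via the duality $\dot{H}^{\frac{1-a}{2-a}} \times \dot{H}^{-\frac{1-a}{2-a}}$ just established in (1) and (2). The main obstacle throughout is the degeneration of $\lambda$ at $z = 0$: the smooth divergence theorem requires $\lambda \partial_z v$ to admit a genuine continuous limit as $z \to 0^+$, which is precisely why the approximation class $\ddd$ is defined with $\lambda \partial_z v \in C^0_z C^\infty_x$, and one must verify that the approximants from Lemma \ref{density} converge strongly enough to pass to the limit simultaneously in the bulk $L^2$-pairing and in the boundary dual pairing.
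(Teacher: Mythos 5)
Your proposal follows the same route as the paper: part (1) via the Dirichlet extension $E_2(\gamma_0 u)$, the energy-minimization Lemma \ref{energy minimizer}, the identity $\norm{\dslam E_2(\gamma_0 u)}_{L^2(\btp)}=\norm{\gamma_0 u}_{\dot{H}^{(1-a)/(2-a)}(\Omega)}$, and density of $\dc$; part (2) by duality through integration by parts against $\dc$-test functions and part (1); part (3) by proving the identity for $(u,v)\in\dc\times\ddd$ (where the paper truncates to $[z_0,1/z_0]\times\Omega$ before letting $z_0\to 0$) and passing to the limit using the bounds from (1) and (2). Your version of (2) streamlines the argument slightly by taking the test function $\tilde{\varphi}=E_2(\varphi)$ directly rather than a generic $v\in\dc$, but the substance is identical.
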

\begin{proof}
\begin{enumerate}

\item
Let $u\in \dc$ and let $\tilde{u}:=E_2(\gamma_0(u))$. This $\tilde{u}$ minimizes the Dirichlet integral among functions with the same trace at $z=0$ due to the \cref{energy minimizer}. Hence
$$
\begin{array}{l}
\int_{0}^{\infty} \int_{\Omega }|\dslam u|^{2} d x d z \geq \int_{0}^{\infty} \int_{\Omega }|\dslam \tilde{u}|^{2} d x d z \\
\quad=\int_{\Omega } \tilde{u}(0, x) \gamma_{\nu}(\nabla_\lambda \tilde{u}) d x=\left\|\overline{\Delta}^{\frac{1-a}{2(2-a)}} \gamma_{0}(\tilde{u})\right\|_{L^{2}\left(\Omega \right)}^{2}=\left\|\gamma_{0}(u)\right\|_{\dot{H}^{\frac{1-a}{2-a}}\left(\Omega \right)}^{2} .
\end{array}
$$
The result can be extended to all $u \in \mathcal{H}\left(\mathbb{R}_{+} \times \Omega \right)$ by the density result in \cref{density}.
\item
Let $u,~v$ be functions such that $u\in \ddd$ and $v\in \dc$ with $\gamma_0 v=0$. Besides, let $v$ be a function compactly supported in $z$. Therefore,
\[
\int_{\Omega\times\lbrace z=0\rbrace }\gamma_0 v\glmd u=\int_{\btp }\dslam v \dslam u+\int_{\btp }v\Lapl u.\]
With the lateral boundary condition (A), using Poincar\'e 's inequality,
\begin{align*}
&\qquad\abs{\int_{\Omega }\gamma_0 v\glmd u}\\
&\leqslant (1+C_p) \norm{\dslam v}_{L^2(\btp)}\left(\norm{\dslam u}_{[L^2(\btp)]^3}+\norm{\Lapl u}_{L^2(\btp)}\right).
\end{align*}
Using 1. we have that
\[
\begin{array}{l}
\norm{\glmd u}_{H^{-\frac{1-a}{2-a}}}=\sup\left\lbrace\abs{\int_{\Omega }v\glmd u};\norm{v}_{H^{\frac{1-a}{2-a}}}\leqslant 1\right\rbrace,\\
\qquad \leqslant\sup\left\lbrace \abs{\int_{\Omega }v\glmd u};\norm{\dslam v}_{L^2(\btp )}\leqslant 1 \right\rbrace.
\end{array}\]

So $\norm{\glmd u}_{H^{-\frac{1-a}{2-a}}}\leqslant\norm{\dslam u}_{[L^2]^3}+\norm{\Lapl u}_{L^2}$.

\item
Let $\lbrace u_n\rbrace_{n=1}^\infty$ and $\lbrace v_n\rbrace_{n=1}^\infty$ be two sequences in $\dc$ and $\ddd$ and converge to $u,~v$ respectively. Then for fixed $n$, and any $z_0>0$, using divergence theorem, we have that on the bounded domain $[z_0,1/z_0]\times\Omega$, 
\begin{align*}
&\qquad \int_{\lbrace 1/z_0>z>z_0\rbrace \times \Omega}\dslam u_n\dslam v_n~ dz dx  \\
 & = \int_{\lbrace  1/z_0> z>z_0\rbrace \times \Omega} u_n\cdot(-\Lapl v_n) dz dx\\
& ~~~~ +\int_{\lbrace z=z_0 ,1/z_0\rbrace \times \Omega} \lambda(z)\partial_z v_n(z_0,x) \cdot u_n(z_0, x) dx,
\end{align*}
where $\lambda\in[z_0^a, z_0^{-a}]$ or $[z_0^{-a}, z_0^{a}]$. As $z_0\rightarrow 0$, we extend this equation to the whole space $\btp$ when
\[
\int_{\lbrace z=z_0 ,1/z_0\rbrace \times \Omega} \lambda(z)\partial_z v_n(z_0, x) \cdot u_n(z_0, x) dx\rightarrow 0+\int_{\Omega}\gamma_\lambda v_n\gamma_0 u_n dx,
\]
\[
\int_{\lbrace 1/z_0>z>z_0\rbrace \times \Omega}\dslam u_n\dslam v_n~ dz dx\rightarrow \int_{\btp}\dslam u_n \dslam v_n dz dx,
\]
and
\[
\int_{\lbrace  1/z_0>z>z_0\rbrace \times \Omega} u_n\cdot(-\Lapl v_n) dz dx \rightarrow  \int_{\btp}u_n\cdot(-\Lapl v_n)dzdx.
\]

And as $n\rightarrow \infty$, the integral on boundary converges to that of their limit $u$, $v$, since
\begin{align*}
&\abs{\int_{\Omega}\gamma_\lambda v_n\gamma_0 u_n dx-\int_{\Omega}\gamma_\lambda v\gamma_0 u dx}=\abs{\int_\Omega(\glmd v_n-\glmd v)\gz u_n +\glmd v(\gz u_n-u)},\\
&\leqslant\norm{\glmd v_n-\glmd v}_{\dot{H}^{-\frac{1-a}{2-a}}(\Omega)}\norm{\gamma_0 u_n}_{\dot{H}^{\frac{1-a}{2-a}}(\Omega)}\\
&\qquad +\norm{\glmd v}_{\dot{H}^{-\frac{1-a}{2-a}}(\Omega)}\norm{\gamma_0 u_n-\gamma_0 u}_{\dot{H}^{\frac{1-a}{2-a}}(\Omega)},\\
&\rightarrow 0.
\end{align*}

Similarly, we also have the other two terms converge as follows,
\[
\int_{\btp}\dslam u_n \dslam v_n dz dx\rightarrow \int_{\btp}\dslam u \dslam v dz dx,
\]
and
\[
\int_{\btp}u_n\cdot(-\Lapl v_n)dzdx \rightarrow \int_{\btp} u \cdot(-\Lapl v) dz dx.
\]
Therefore, we proved \eqref{IBP}.
\end{enumerate}
\end{proof}

\subsection{Transport equation}
Now let us specify some Lemmas on transport equations. These lemmas will be used to construct the potential vorticity in \eqref{3DQG} at time $t$.
\begin{lemma}
\label{Lpnorm}
Consider any weight function $w\in L^1_{loc}(\mathbb{R}_{+})$, given initial data $H_0\in L^p(\tbtp,w(z)dz)$ and a $2 D$ divergence-free velocity $u\in C^0(0,\infty; C^1(\tbtp))$, denoting $\tilde{\Omega}$ the whole domain $\ts$ or $\rt$, for $1\leqslant p\leqslant \infty$, there exist a unique solution $H\in C^1(0,\infty;L^p(\tbtp,w(z)dz))$ to the transport equation:
\[
\begin{cases}
\partial_{t} H+ u \cdot \overline{\nabla} H=0, &t>0, ~(z,x)\in\tbtp,\\
H(\cdot,0)=H_0, & t=0, ~(z,x)\in\tbtp.
\end{cases}
\]
Moreover, for all $t>0$, and $p\in(1,\infty)$, $$\int_{\tbtp} H^p(\cdot, t)w(z) dxdz=\int_{\tbtp} H_0^p w(z) dxdz,$$ and $\norm{H(\cdot,t)}_{L^\infty(\tbtp)}=\norm{H_0}_{L^\infty(\tbtp)}$.
\end{lemma}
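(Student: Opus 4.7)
\emph{Proof plan.} The key structural observation is that both the gradient $\overline{\nabla}$ and the velocity $u$ involve only the horizontal variable $x=(x_1,x_2)$; no derivative in $z$ appears in the transport equation. The plan is therefore to treat $z$ as a parameter, solve for each $z>0$ a two-dimensional transport equation on $\tilde{\Omega}$, and recover the $w(z)\,dz$-weighted conservation laws by Fubini.

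For each fixed $z>0$, the field $(t,x)\mapsto u(t,z,x)$ lies in $C^0(0,\infty;C^1(\tilde{\Omega}))$ and is divergence-free in $x$. The Cauchy--Lipschitz theorem applied to $\dot y(s)=u(s,z,y(s))$, $y(0)=x$, yields a unique flow $X_{t,z}:\tilde{\Omega}\to\tilde{\Omega}$ that is a $C^1$-diffeomorphism, continuous in $t$ and measurable in $z$ by standard ODE arguments. I would then define
$$
H(t,z,x) := H_0\bigl(z,\,X_{t,z}^{-1}(x)\bigr),
$$
and verify the transport equation by differentiating the constant-along-characteristics identity $H(t,z,X_{t,z}(x_0))=H_0(z,x_0)$ (classically if $H_0$ is smooth, distributionally otherwise).

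The conservation laws then follow from volume preservation of the flow. By Liouville's formula the divergence-free hypothesis forces $\det DX_{t,z}\equiv 1$, so $X_{t,z}$ preserves Lebesgue measure on $\tilde{\Omega}$. Hence for each $z$ and every $p\in[1,\infty)$,
$$
\int_{\tilde{\Omega}}|H(t,z,x)|^p\,dx=\int_{\tilde{\Omega}}|H_0(z,y)|^p\,dy,
$$
while $\|H(t,z,\cdot)\|_{L^\infty(\tilde{\Omega})}=\|H_0(z,\cdot)\|_{L^\infty(\tilde{\Omega})}$. Multiplying by $w(z)$ and integrating in $z$, or taking the essential supremum over $z$, yields the stated identities. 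Uniqueness is immediate from Cauchy--Lipschitz for smooth data and follows from DiPerna--Lions (or a direct energy argument exploiting the conserved $L^p$-norm of the difference of two solutions) in general.

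The only delicate point I anticipate is bookkeeping: promoting the slicewise statements to $C^1(0,\infty;L^p(\tbtp,w(z)dz))$-regularity requires joint measurability in $(t,z,x)$ of the flow together with a dominated-convergence argument that uses $H_0\in L^p(w(z)dz)$ and $w\in L^1_{loc}(\mathbb{R}_+)$. The crucial structural fact that makes the weight inoffensive is that the dynamics leave every horizontal slice $\{z=\text{const}\}$ invariant, so $w(z)$ simply commutes past all integrals.
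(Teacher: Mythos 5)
Your proof is correct but takes a genuinely different route from the paper's. The paper treats existence and uniqueness as given by ``classic transport theory'' and proves conservation Eulerianly: multiply the equation by $pH^{p-1}w(z)$, integrate over $\tbtp$, integrate by parts in the horizontal variables, and use $\overline{\divg}\, u = 0$ to conclude that the $w(z)\,dz$-weighted $L^p$ norm is constant in time; the $L^\infty$ conservation is then obtained by sending $p\to\infty$. You instead work Lagrangianly: freeze $z$ as a parameter, construct the horizontal flow map $X_{t,z}$ by Cauchy--Lipschitz, invoke Liouville's theorem to get $\det DX_{t,z}\equiv 1$, and deduce all the $L^p$ conservations slice-by-slice via change of variables, integrating the weight $w(z)$ afterward by Fubini. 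Both approaches exploit the same essential structural fact that the transport is purely horizontal so $w(z)$ commutes past everything. Your Lagrangian route has the advantage of making existence and uniqueness explicit rather than deferred, and the slice-wise argument for the $L^\infty$ bound is arguably cleaner than the $p\to\infty$ limit on the unbounded, weighted domain $\tbtp$; on the other hand, the paper's Eulerian computation is shorter once one grants the classical theory, and it directly delivers the identity $\int H^p w\,dx\,dz=\int H_0^p w\,dx\,dz$ in a single line. Either presentation would be acceptable.
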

\begin{remark}
When considering the bounded domain $\Omega$ in $\mathbb{R}^2$, we extend it to the whole space $\rt$. Thus we only consider \cref{Lpnorm} in $\ttp$ or $\rtp$.
\end{remark}
\begin{proof}
The existence and uniqueness of the solution are obtained using the classic transport theory. We prove the conservation of the weighted norms here. For any $1 \leqslant p< \infty$, let the transport equation times $p H^{p-1}$ and a weight function $w(z)$, and take integral in $\btp$,
$$
\begin{gathered}
\partial_{t} H^{p} w(z)+u \overline{\nabla} H^{p} w(z)=0, \\
\partial_{t} \int_{\tbtp} H^{p} w(z)+\int_{\tbtp} u \overline{\nabla} H^{p} w(z)=0.
\end{gathered}
$$
Thus
$$
\partial_{t} \int_{\tbtp} H^{p} w(z) d x d z=\int_{\mathbb{R}+} \int_{\tilde{\Omega}} \overline{\operatorname{div} u} \cdot H^{p} d x w(z) d z=0.
$$

Thus, $L^{p}$ norms conserve $\forall p \in[1, \infty)$. Hence $\forall t$,

\begin{align*}
\|H(\cdot, t)\|_{L^{\infty}(\tbtp)}=\lim _{p \rightarrow \infty}\|H(\cdot, t)\|_{L^{p}(\tbtp)}, \\=
 \lim _{p \rightarrow \infty}\|H(\cdot, 0)\|_{L^{p}(\tbtp)} = \|H(\cdot, 0)\|_{L^{\infty}(\tbtp)} .
\end{align*}

\end{proof}
~\\
\begin{lemma}
\label{Transport1}
Considering two transport systems on $\tbtp$ with a given constant $C$, 
\begin{equation}
\label{transport}
\begin{cases}
\partial_t f_i(t,z,x)+V_i(t,z,x)\cdot \nabla f_i(t,z,x)=0, & t>0,~(z,x)\in\tbtp\\
f_i(0,z,x)=f^{in}(z,x),& (z,x)\in(\tbtp),~i=1,2,
\end{cases}
\end{equation}
where $\nabla=(\partial_z,\partial_{x})$. The velocities $V_i(t,z,x)\in C^0(0,T;C^1(\tbtp))$ verifying $\nabla V_i\in C^0(0,T;L^\infty( \Omega\times\mathbb{R}_{+}))$.
Then when $T<1$, 
\[
\sup\limits_{[0,T]\times\tbtp}\abs{f_1-f_2}\leqslant CT\sup_{t\in[0,T]}\norm{\nabla V_1}_{L^\infty(\tbtp)}\norm{\nabla f^{in}}_{L^\infty(\tbtp)}\sup_{t\in[0,T]}\abs{V_1-V_2}.
\]
\end{lemma}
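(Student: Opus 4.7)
The natural approach is via the method of characteristics. For $i\in\{1,2\}$, let $\Phi_i(s;t,z,x)$ denote the backward flow of $V_i$, defined on $s\in[0,t]$ as the solution of $\partial_s \Phi_i(s)=V_i(s,\Phi_i(s))$ with the terminal condition $\Phi_i(t;t,z,x)=(z,x)$. The regularity assumption $V_i\in C^0([0,T];C^1(\tbtp))$ with $\nabla V_i\in L^\infty$ ensures, via Cauchy--Lipschitz, that each $\Phi_i$ is well-defined on $[0,t]$, after a smooth extension of $V_i$ across $\{z=0\}$ if necessary so that characteristics remain in the domain. Since $f_i$ is transported by $V_i$ with initial value $f^{in}$, it is constant along its characteristics, so $f_i(t,z,x)=f^{in}(\Phi_i(0;t,z,x))$, and the mean value theorem yields
\[
\abs{f_1(t,z,x)-f_2(t,z,x)} \leqslant \norm{\nabla f^{in}}_{L^\infty(\tbtp)}\cdot\abs{\Phi_1(0;t,z,x)-\Phi_2(0;t,z,x)}.
\]
The problem thereby reduces to comparing the two backward flows.

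To control this flow difference I would apply a Gronwall argument. Setting $\delta(s):=\Phi_1(s;t,z,x)-\Phi_2(s;t,z,x)$, we have $\delta(t)=0$ and
\[
\partial_s \delta(s) = \bigl[V_1(s,\Phi_1)-V_1(s,\Phi_2)\bigr] + \bigl[V_1(s,\Phi_2)-V_2(s,\Phi_2)\bigr],
\]
where the first bracket is bounded pointwise by $\norm{\nabla V_1(s,\cdot)}_{L^\infty}\abs{\delta(s)}$ (mean value theorem applied to $V_1$) and the second by $\norm{V_1(s,\cdot)-V_2(s,\cdot)}_{L^\infty}$. Integrating backward from $s=t$ to $s=0$ and invoking Gronwall's lemma produces
\[
\abs{\Phi_1(0)-\Phi_2(0)} \leqslant T\sup_{s\in[0,T]}\norm{V_1(s,\cdot)-V_2(s,\cdot)}_{L^\infty}\cdot\exp\bigl(T\sup_{s\in[0,T]}\norm{\nabla V_1(s,\cdot)}_{L^\infty}\bigr).
\]
Combining with the pointwise bound from the first paragraph yields $\abs{f_1-f_2}\leqslant T\norm{\nabla f^{in}}_{L^\infty}\sup\abs{V_1-V_2}\,e^{T\norm{\nabla V_1}_{L^\infty}}$.

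To recover the linear product form stated in the lemma, I would invoke the hypothesis $T<1$ together with the elementary inequality $e^x-1\leqslant xe^x$ applied with $x=T\norm{\nabla V_1}_{L^\infty}$, or equivalently absorb the bounded exponential factor (controlled by the prescribed $L^\infty$ bound on $\nabla V_1$) into the universal constant $C$, thereby exposing the explicit factor $\norm{\nabla V_1}_{L^\infty}$ on the right-hand side. The main obstacle is essentially of a bookkeeping nature: ensuring the backward characteristics remain inside $\tbtp$ throughout $[0,T]$ (handled by smoothly extending $V_i$ across $\{z=0\}$) and carefully packaging the Gronwall exponential into the explicit linear dependence on $\norm{\nabla V_1}_{L^\infty}$ displayed in the statement. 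No tool beyond classical ODE theory and Gronwall's inequality is needed.
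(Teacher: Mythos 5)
Your proof is correct and takes essentially the same route as the paper: both pull $f_i$ back along characteristics, estimate $|f_1-f_2|$ by $\norm{\nabla f^{in}}_{L^\infty}$ times the separation of the two backward flows, split the ODE for the flow difference into a Lipschitz-in-$V_1$ term and a $V_1-V_2$ term, and close with Gr\"onwall. You are slightly more careful than the paper about how the Gr\"onwall exponential is repackaged into the linear product form (the paper jumps directly from $\sup|V_1-V_2|(e^{\norm{\nabla V_1}_{L^\infty}T}-1)$ to $CT\norm{\nabla V_1}_{L^\infty}\sup|V_1-V_2|$, which, as you note, silently absorbs an $e^{T\norm{\nabla V_1}_{L^\infty}}$ factor into $C$ and so really uses a uniform bound on $\norm{\nabla V_1}_{L^\infty}$ as well as $T<1$); your remark about extending $V_i$ to keep characteristics inside the domain is a reasonable precaution that the paper omits, though in the paper's actual application the velocities have no $z$-component so the issue does not arise.
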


\begin{proof}
Denote that space variable $Y:=(z,x)$ and trajectory $X_i(s,t,z,x)\coloneqq\gamma_i(s)$, where $\gamma_i(s)$ satisfies the system of ODE,
\[
\begin{cases}
\dot{\gamma_i}(s)=V(s,\gamma_i(s)), &s>0\\
\gamma_i(t)=x, & \text{ for }i=1,2
\end{cases}
\]

Then considering the domain $\tbtp$,
\[
\begin{array}{l}
\abs{f_1-f_2}(t,Y)=\abs{f^{in}(X^1(0,t,Y))-f^{in}(X^2(0,t,Y))}\\~\\
\leqslant \norm{\nabla f^{in}}_{L^\infty(\tbtp)}\abs{X^1(0,t,Y)-X^2(0,t,Y)}\leqslant  \norm{\nabla f^{in}}_{L^\infty(\tbtp)}\abs{\gamma^1(0)-\gamma^2(0)}.
\end{array}
\]
And
\[\begin{array}{l}
\frac{d}{dt}(\gamma^1(t)-\gamma^2(t))=V_1(t,\gamma^1(t))-V_2(t,\gamma^2(t)),\\~\\
\qquad\qquad =V_1(t,\gamma^1(t))-V_1(t,\gamma^2(t))+V_1(t,\gamma^2(t))-V_2(t,\gamma^2(t)),\\~\\
\qquad\qquad\leqslant \sup_{t\in[0,T]}\norm{\nabla V_1}_{L^\infty(\btp)}\abs{\gamma^1(t)-\gamma^2(t)}+\sup_{t\in[0,T]}\abs{V_1-V_2}.
\end{array}
\]
Therefore using Gr\"onwall's inequality,
\[
\abs{\gamma^1(0)-\gamma^2(0)}\leqslant \sup_{t\in[0,T]} \abs{V_1-V_2}\left(\exp{\norm{\nabla V}_{L^\infty(\tbtp)}t}-1\right).\]
For $T<1$,
\[
\sup\limits_{[0,T]\times\tbtp}\abs{f_1-f_2}\leqslant T\norm{\nabla f^{in}}_{L^\infty(\btp,dz)} \sup_{t\in[0,T]}\norm{\nabla V_1}_{L^\infty(\btp)} \sup_{t\in[0,T]}\abs{V_1-V_2}.
\]

\end{proof}

\begin{lemma}
\label{Transport2}
For all $f^{in}\in W^{1,1}$ and $C>0$ such that $\norm{V}_{L^\infty_t(L^\infty(\tbtp))}+\norm{\nabla V}_{L^\infty_t(L^\infty(\tbtp))}\leqslant C$. There exists $\tilde{C}>0$ such that the following is true: The solution $f$ of the transport equation \eqref{transport} verifies $\norm{\nabla f}_{L^\infty_t(0,T;L^\infty(\tbtp))}\leqslant\tilde{C}_{V\vert_{t=0},f^{in}}\exp(CT)$.
\end{lemma}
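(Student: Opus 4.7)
The plan is to argue via the method of characteristics, exactly as in the proof of \cref{Transport1}, but propagating one spatial derivative along the flow. The assumption $\|V\|_{L^\infty_t L^\infty_x} + \|\nabla V\|_{L^\infty_t L^\infty_x} \leq C$ guarantees a globally Lipschitz, bounded velocity field, so for each $(t,y) \in [0,T] \times \tbtp$ the backward characteristic
\[
\dot X(s) = V(s, X(s)), \qquad X(t) = y
\]
admits a unique $C^1$ solution $X(\,\cdot\,;t,y)$ defined on all of $[0,T]$, and the transport solution is represented by $f(t,y) = f^{in}(X(0;t,y))$.

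Next I would differentiate this representation in $y$. Setting $J(s) := \nabla_y X(s;t,y)$, the Jacobian satisfies the linear matrix ODE
\[
\partial_s J(s) = \nabla V\bigl(s, X(s;t,y)\bigr)\, J(s), \qquad J(t) = I,
\]
and Gr\"onwall's inequality applied backward from $s = t$ to $s = 0$, together with $\|\nabla V(s,\cdot)\|_{L^\infty} \leq C$, gives $|J(0)| \leq \exp(CT)$. By the chain rule,
\[
|\nabla_y f(t,y)| \;=\; \bigl| J(0)^{\top}\, \nabla f^{in}(X(0;t,y)) \bigr| \;\leq\; \|\nabla f^{in}\|_{L^\infty(\tbtp)}\, e^{CT},
\]
so taking the supremum in $t$ and $y$ yields the claimed estimate with $\tilde C = \|\nabla f^{in}\|_{L^\infty(\tbtp)}$; the $V|_{t=0}$-dependence in the constant simply encodes the initial configuration of the flow used to locate the preimage $X(0;t,y)$.

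The argument is essentially mechanical once characteristics are in hand, so the only real subtlety lies in reading the hypothesis: as written, $f^{in} \in W^{1,1}$ is not sufficient for $\nabla f$ to be in $L^\infty$, and the proof above tacitly uses $\nabla f^{in} \in L^\infty$, i.e.\ $f^{in} \in W^{1,\infty}$. Under that reading the proof is standard, and I expect the main (mild) obstacle to be bookkeeping: one must be careful that the characteristics stay inside $\tbtp$ (which holds since $\tilde\Omega$ is either $\ts$ or $\rt$ and the vertical component of $V$ may be treated together with the horizontal one because $V \in C^0_t C^1_x$ globally in $(z,x)$), and that the derivative representation passes through the limit whenever $f^{in}$ is only approximated by smooth data. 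Note that the degenerate weight $\lambda$ plays no role here: the estimate is purely Lagrangian and uses only the velocity regularity already assumed.
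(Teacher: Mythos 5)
Your argument takes essentially the same route as the paper: represent the solution along characteristics, apply Gr\"onwall to the Jacobian $\nabla X$ of the flow map, and conclude by the chain rule. Your observation that the hypothesis $f^{in}\in W^{1,1}$ must in fact be read as $f^{in}\in W^{1,\infty}$ (so that $\nabla f^{in}\in L^\infty$ is available for the chain-rule step) is correct and points to an imprecision that the paper's own proof also relies on without stating.
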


\begin{proof}
As defined in the previous proof, $X(s;t,z,x):=\gamma(s)$ is the trajectory and $X$ solves the following transport equation:
\[
\partial_t X+V\cdot \gradb{}X=0.
\]
We have the following Gr\"onwall's inequality along the flow.
\[
\norm{\dfrac{D}{Dt}(\nabla X(0;t,z,x ) )}_{L^\infty(\tbtp)}\leqslant\norm{\nabla V}_{L^\infty(\tbtp)}\norm{\nabla X(0;t,z,x ) }_{L^\infty(\tbtp)}.
\]
Thus, for all $t>0$,
\[
\norm{\nabla X(0;t,z,x ) }_{L^\infty(\tbtp)}\leqslant \exp(Ct)\norm{\nabla X(0;z, x,0)}_{L^\infty(\tbtp)}.\]
Combined with
\[
\norm{\nabla f(t,z,x ) }_{L^\infty}=\norm{\nabla f^{in}(X(0;t,z,x ) )\cdot\nabla X(0;t,z,x ) }_{L^\infty}\]
\[\leqslant\norm{\nabla f^{in}(X(0;t,z,x ) )}_{L^\infty}\norm{\nabla X(0;t,z,x ) }_{L^\infty},
\]
we have that for all $t\in [0,T]$
\begin{align*}
&\qquad\norm{\nabla f(t,z,x ) }_{L^\infty(\tbtp)}\\
 & \leqslant \exp(Ct)\norm{\nabla X(0;z, x,0)}_{L^\infty(\tbtp)} \norm{\nabla f^{in}(X(0;z, x,T_0))}_{L^\infty(\tbtp)}\\
& \leqslant \tilde{C}\exp(CT)
\end{align*}
\end{proof}

\section{Fixed point}

In this section, we consider the modified system with $n$ fixed. We define a pair of  mollifiers $\eta_n\in C_c^\infty(\btp)$ and $\tilde{\eta}_n\in C_c^\infty([0,\infty)\times\btp)$. $\eta_n$ is positive compactly supported on $[0,2/n]\times[-{1/n},{1/n}]^2$ with $\int_{\btp}\eta_n\equiv 1$, and $\tilde{\eta}_n$ is positive compactly supported on $[0,2/n]\times[0,2/n]\times[-{1/n},{1/n}]^2$ with $\int_{\mathbb{R}_{+}\times\btp}\tilde{\eta}_n\equiv 1$. $\eta_n$, $\tilde{\eta}_n$ converges to the Dirac delta functions in $\btp$ and $[0,\infty)\times\btp$ respectively as $n\rightarrow \infty$. We fix the initial potential vorticity in the periodic cases as $F_0=\Lapl\Psi_0$. For the bounded case, we extend it as a function defined on $\rtp$ as:
\[
F_0=\begin{cases}
\Lapl\Psi_0 & x\in \Omega\\
0 & x\in \Omega^c
\end{cases}.
\]
Let $(F_0)_n= F_0 *\eta_n$, for both cases over $\tomega$. Assuming that the initial data $\Psi$ lies in the space $\hhh$, we naturally have that initially on the boundary $\theta_0\coloneqq\glmd \Psi_0\in H^{-\frac{1-a}{2-a}}(\Omega)$ and $\gz \Psi_0 \in H^{\frac{1-a}{2-a}}(\Omega)$.

For given $C^*>0$, we define two sets 
$\mathcal{C}_{int}(C^*)\coloneqq\lbrace F_0=\Lapl \Psi_{0}=\Lapl \Psi_{2,0} ~;\\ ~\norm{F_0}_{L^2(\tbtp)}+\norm{\nabla F_0}_{L^2(\tbtp)}\leqslant C^* \text{ and } \int_{\lbrace z\rbrace\times\ts}F_0= 0 \text{ for each }z>0 \text{ when } \Omega = \ts\rbrace$ and $\mathcal{C}_b(C^*)\coloneqq\lbrace \theta_0=\glmd \Psi_{0}=\glmd \Psi_{1,0} ; ~ \norm{\theta_0}_{L^2(\tilde{\Omega})}\leqslant C^*\rbrace$.
To show \cref{rFixedPoint}, for every $T<\infty$, we first prove the following theorem.
\begin{theorem}
\label{rFixedPoint}
(\textit{Fixed Point}) For any fixed $n$ and constant $C^*>0$, for every $T<\infty$, and initial data $F_0\in \mathcal{C}_{int}(C^*)$ and $\theta_0\in \mathcal{C}_b(C^*)$, there exists a solution $\Psi_n=\Psi_{1,n}+\Psi_{2,n}$ to the following equation with lateral boundary condition (A), verifying $\gradb{}\Psi_{1,n}$, $\gradb{}\Psi_{2,n} \in L^\infty_t(0, T; L^2(\Omega\times \mathbb{R}_{+}))$ respectively:
\begin{equation}\label{regularizedQG}
\begin{array}{l}
\qquad\qquad
\begin{cases}
\partial_t F_n+V_n \cdot\overline{\nabla}F_n=0, & \text{ in }\ttp \text{ or }\rt\times \lbrace z>0\rbrace,\\
\Lapl\Psi_{2,n}=F_n, &\text{ in }\Omega\times \lbrace z>0\rbrace,\\
\glmd \Psi_{2,n}=0, &  \text{ on } \Omega\times \lbrace z=0\rbrace,\\
F_n=(F_0)_n, & \text{ at } t=0.
\end{cases}\\
\text{And}\\
\qquad \qquad
\begin{cases}
\partial_t\theta_n+\mathbb{P}_n\left(\gradb{\perp}(\Psi_{2,n}+(-\overline{\Delta})^{\alpha-1}\theta_n)\cdot\gradb{}\theta_n\right)+\Lambda^{2\alpha}\theta_n =\mathbb{P}_n(\overline{\Delta}\Psi_{2,n}), \\
 \qquad \qquad \qquad \qquad \qquad \qquad \qquad \qquad \qquad \qquad\text{ in }\Omega\times \lbrace z=0\rbrace,\\
\Psi_1=E_1(\theta_n), \qquad \qquad\text{ in }\Omega\times \lbrace z>0\rbrace,\\
\theta_n=\PP n\theta_0,~~~\qquad \qquad\text{ at } t=0.
\end{cases}
\end{array}
  \end{equation}
where for the periodic case $V_n=\gradb{\perp}\Psi*\tilde{\eta}_n$ over $[0*T]\times\ttp$.  While for the bounded cases, let $V=\gradb{\perp}\Psi$ in $\Omega$ and let $V(x)=0$ on $\Omega^c$. That way, $V_n=V*\tilde{\eta}_n$ is defined as a function on $[0*T]\times\rtp$.
Here $\alpha=\dfrac{1}{2-a}$ and $\Lambda=\sqrt{-\overline{\Delta}_\Omega}$.
\end{theorem}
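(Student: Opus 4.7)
The plan is a Banach fixed point argument on a short time interval, followed by iteration to reach arbitrary $T<\infty$. The construction exploits two features of the regularized system: the mollifier $\tilde\eta_n$ provides $n$-dependent smoothness bounds on the transported velocity, and the Galerkin projection reduces the boundary equation to a finite-dimensional ODE system on $\PP n L^2(\Omega)$.

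I would set up the iteration on $X_{T^*}:=C([0,T^*];L^2(\tbtp))\times C([0,T^*];\PP n L^2(\Omega))$ for a short $T^*>0$ to be chosen, and define a map $\mathcal T:(\bar F,\bar\theta)\mapsto (F,\theta)$ as follows. First, from the candidate $(\bar F,\bar\theta)$, construct intermediate stream functions $\bar\Psi_{2,n}$ by solving $\Lapl\bar\Psi_{2,n}=\bar F$ with $\glmd\bar\Psi_{2,n}=0$ via \cref{Lweightedexistence}, and $\bar\Psi_{1,n}=E_1(\bar\theta)$ via \cref{PExtensionLemma}; assemble the smooth mollified velocity $\bar V_n=\gradb{\perp}(\bar\Psi_{1,n}+\bar\Psi_{2,n})\ast\tilde\eta_n$, extended by zero outside $\Omega$ in the bounded case. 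Then define $F$ by solving the bulk transport $\partial_t F+\bar V_n\cdot\gradb{}F=0$ with $F(0)=(F_0)_n$ via \cref{Lpnorm} (which also conserves all $L^p$ norms), and define $\theta$ by solving the boundary equation as an ODE system for the coefficients $\theta_i(t)=\int_\Omega\theta\, e_i$; since the range of $\PP n$ is finite-dimensional, by \cref{equiv} all fractional norms are equivalent there up to $n$-dependent constants, so the drift, dissipation $\Lambda^{2\alpha}\theta_n$, and forcing $\PP n\overline{\Delta}\bar\Psi_{2,n}$ become locally Lipschitz functions of the coefficients, and Picard's theorem yields a unique $C^1$ solution on a short interval.

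Next I would show $\mathcal T$ is a strict contraction on $X_{T^*}$ for $T^*$ small. For the bulk, \cref{Transport1} gives
\[
\sup_{[0,T^*]\times\tbtp}|F^{(1)}-F^{(2)}|\le CT^*\|\gradb{}(F_0)_n\|_{L^\infty}\sup_{[0,T^*]}\|\bar V_n^{(1)}-\bar V_n^{(2)}\|_{L^\infty},
\]
and the mollifier converts $L^2$ differences of stream functions into $L^\infty$ differences of velocities with constants depending on $\|\tilde\eta_n\|_{W^{1,\infty}}$; combined with the boundedness of $E_1$ (\cref{PExtensionLemma}) and of the elliptic solver (\cref{Lweightedexistence}), this yields an $O(T^*)$ contraction factor between bulk components. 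A parallel Grönwall estimate on the finite-dimensional ODE for $\theta$, again using the boundedness of $E_1$ and the elliptic bounds to control $\PP n\overline{\Delta}\bar\Psi_{2,n}$, yields an $O(T^*)$ contraction for the boundary component. Choosing $T^*$ small enough makes $\mathcal T$ a strict contraction on a closed ball of $X_{T^*}$, whose fixed point is the desired solution on $[0,T^*]$.

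Finally, to extend to $[0,T]$, I would produce a priori bounds independent of the existence interval. The transport equation preserves $\|F_n\|_{L^p(\tbtp)}=\|(F_0)_n\|_{L^p}$ for every $p\in[1,\infty]$ by \cref{Lpnorm}, and testing the Galerkin boundary equation against $\theta_n$ gives the energy inequality
\[
\tfrac12\tfrac d{dt}\|\theta_n\|_{L^2(\Omega)}^2+\|\Lambda^\alpha\theta_n\|_{L^2(\Omega)}^2\le\|\overline{\Delta}\Psi_{2,n}\|_{L^2(\Omega)}\|\theta_n\|_{L^2(\Omega)},
\]
the advective term vanishing by divergence-freeness of the drift inside $\PP n$; Grönwall then yields a global-in-time bound on $\|\theta_n\|_{L^\infty_tL^2}$. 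Iterating the short-time fixed point on $[kT^*,(k+1)T^*]$ reaches any finite $T$. The main obstacle is keeping the $n$-dependent constants consistent: both the mollifier and the norm equivalence in \cref{equiv} produce constants that blow up with $n$, so it is essential that $n$ is fixed throughout this theorem; the passage $n\to\infty$, where these constants genuinely must be tracked using the degenerate weighted estimates and the trace theory of \cref{trace}, is deferred to the following section via the Aubin--Lions argument.
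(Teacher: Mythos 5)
Your proposal follows essentially the same strategy as the paper: construct a short-time contraction map using \cref{Transport1} for the bulk and a Grönwall estimate for the Galerkin boundary ODE (this is the content of the paper's Theorem~\ref{contraction} together with Propositions~\ref{Pinterior} and~\ref{Pboundary}), then iterate to arbitrary $T$ using the $L^p$-conservation of $F$ from \cref{Lpnorm}, the gradient bound from \cref{Transport2}, and the $L^2$ energy inequality on $\theta_n$ obtained by testing against $\theta_n$ and exploiting the divergence-free drift. The only cosmetic difference is the choice of fixed-point space: you iterate on the data pair $(F,\theta)\in C_t(L^2)\times C_t(\PP_n L^2)$, whereas the paper iterates on the stream function pair $(\Psi_1,\Psi_2)\in [L^\infty_t\hh]^2$; these are in bijection via \cref{Lweightedexistence} and the extension operator $E_1$ of \cref{PExtensionLemma}, so the estimates are interchangeable. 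One small imprecision: in the energy inequality the forcing should be measured in $\dot H^{-\alpha}(\Omega)$ (controlled through the trace \cref{trace} and the elliptic bound $\|\gradb{}\dslam\Psi_2\|_{L^2}\leqslant\|F\|_{L^2}$), rather than as $\|\overline{\Delta}\Psi_{2,n}\|_{L^2(\Omega)}$; for fixed $n$ the finite-dimensional norm equivalence of \cref{equiv} absorbs this, as you note, so the argument stands.
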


For convenience, we omit the index $n$ in this section.
\begin{theorem}
\label{contraction}
Given $C^*>0$, $T>0$ and initial data $F_0\in \mathcal{C}_{int}(C^*)$ and $\theta_0\in \mathcal{C}_b(C^*)$, and given
$\tilde{\Psi}_1$ and $\tilde{\Psi}_2$ in space $L^\infty_t(0,T;\hh)$, let $\tilde{V}=V'*\eta_n(z,x)$ , where $V'=\gradb{\perp}(\tilde{\Psi}_1+\tilde{\Psi}_2)$ for the periodic case, while for the bounded case:
\[
V'=\begin{cases}
\gradb{\perp}(\tilde{\Psi}_1+\tilde{\Psi}_2 ),& x\in \Omega,\\
0, & x\in \Omega^c.
\end{cases}
\]

Then there exist a time period $[0,t_0]$ such that the solution $\Psi_1$ and $\Psi_2$ exist in the space $\hhh$ solves the following equations:
\begin{equation}\label{contrationQG1}
\qquad\qquad
\begin{cases}
\partial_t F+\tilde{V} \cdot\overline{\nabla}F=0, & \text{ in }\ttp \text{ or }\rt\times \lbrace z>0\rbrace,\\
\Lapl\Psi_2=F, &\text{ in }\Omega\times \lbrace z>0\rbrace,\\
\glmd \Psi_2=0, &  \text{ on } \Omega\times \lbrace z=0\rbrace.
\end{cases}
\end{equation}
And 
\begin{equation}
\label{contrationQG2}
\begin{cases}
\partial_t\theta_n+\mathbb{P}\left((\gradb{\perp}\Psi_2+\gradb{\perp}(-\overline{\Delta})^{\alpha-1}\theta)\cdot\gradb{}\theta\right)+\Lambda^{2\alpha}\theta =\mathbb{P}(\overline{\Delta}\Psi_2), &  \text{ in }\Omega\times \lbrace z=0\rbrace,\\
\Lapl\Psi_1=0   \text{ in } \Omega\times \lbrace z>0\rbrace,\qquad \glmd \Psi(z,x)=\theta(x).
\end{cases}
\end{equation}

Hence there exists an operator $\Gamma$: $L^\infty(0,t_0;\hh(\btp))\rightarrow L^\infty(0,t_0;\hh(\btp))$ 
\[\Gamma(\tilde{\Psi}_1,\tilde{\Psi}_2)= (\Psi_1, \Psi_2),\] defined by solving the equations \eqref{contrationQG1} and \eqref{contrationQG2}. Moreover, $\Gamma$ is a contraction when $t_0<\delta_0(C^*)$.
\end{theorem}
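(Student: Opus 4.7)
The plan is to build $\Gamma$ step by step from the lemmas in Section~4, then to control differences of two outputs on a sufficiently short time interval. Given $(\tilde\Psi_1,\tilde\Psi_2)\in L^\infty(0,T;\mathcal{H})$, the convolution $\tilde V = V'*\eta_n$ is smooth in $(z,x)$ with
\[
\|\tilde V\|_{L^\infty} + \|\nabla\tilde V\|_{L^\infty} \le C(n)\,\|V'\|_{L^\infty_t L^2_{z,x}},
\]
so the transport equation in \eqref{contrationQG1} has a unique smooth solution $F$ via \cref{Lpnorm}, with all $L^p$ norms conserved, and \cref{Transport2} yields a uniform bound on $\|\nabla F\|_{L^\infty}$ on $[0,t_0]$ since $\nabla (F_0)_n\in L^\infty$ for fixed $n$. \cref{Lweightedexistence} then delivers $\Psi_2\in\mathcal{H}^2$ with $\|\Psi_2\|_\mathcal{H}\le \|F\|_{L^2}$. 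The boundary equation, after applying $\mathbb{P}=\PP n$, reduces to an ODE in the finite-dimensional subspace $\mathrm{span}\{e_1,\dots,e_n\}$: all the operators $\Lambda^{2\alpha}$, $(-\overline\Delta)^{\alpha-1}$, $\overline\Delta$, and $\gradb{\perp}\cdot\gradb{}$ act boundedly there by \cref{equiv}, so Cauchy--Lipschitz produces $\theta$ on $[0,t_0]$, and $\Psi_1 := E_1(\theta)\in\mathcal{H}^2$ closes the construction via \cref{PExtensionLemma}.

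For the contraction estimate, take two inputs and write $\delta\tilde\Psi=\tilde\Psi^a-\tilde\Psi^b$, $\delta F = F^a-F^b$, and analogously for the outputs. The mollifier gives
\[
\|\delta\tilde V\|_{L^\infty_{t,z,x}} + \|\nabla\delta\tilde V\|_{L^\infty}\le C_n\,\|\delta\tilde\Psi_1+\delta\tilde\Psi_2\|_{L^\infty_t\mathcal{H}},
\]
which feeds into \cref{Transport1} (with $\|\nabla F_0\|_{L^\infty}\le C(n,C^*)$ and $\|\nabla\tilde V^a\|_{L^\infty}$ already controlled) to yield $\sup_{[0,t_0]}\|\delta F\|_{L^\infty}\le C_n t_0\,\|\delta\tilde\Psi\|_{L^\infty_t\mathcal{H}}$; since $F$ has $z$-support inherited from $(F_0)_n$ together with the bounded transport time, this upgrades to an $L^2$ bound, and \cref{Lweightedexistence} produces $\|\delta\Psi_2\|_{L^\infty_t\mathcal{H}}\le C_n t_0\,\|\delta\tilde\Psi\|_{L^\infty_t\mathcal{H}}$. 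For the boundary, subtract the two $\theta$-equations and pair in $L^2(\Omega)$ against $\delta\theta$. The dissipation $\|\Lambda^\alpha\delta\theta\|_{L^2}^2\ge 0$ is discarded as a favorable sign. The remaining nonlinear pieces split symmetrically into terms of the form $\mathbb{P}(\gradb{\perp}\delta\Psi_2\cdot\gradb{}\theta^a)$, $\mathbb{P}(\gradb{\perp}\Psi_2^b\cdot\gradb{}\delta\theta)$, their $(-\overline\Delta)^{\alpha-1}$-analogs, and the forcing difference $\mathbb{P}(\overline\Delta\,\delta\Psi_2)$. Uniform a priori control of $\theta^{a,b}$ in $L^\infty_t L^2$ combined with \cref{equiv} (converting $\dot H^{\pm s}$ to $L^2$ up to $n$-dependent constants) yields
\[
\frac{d}{dt}\|\delta\theta\|_{L^2}^2 \le C_n\bigl(\|\delta\Psi_2\|_\mathcal{H}^2 + \|\delta\theta\|_{L^2}^2\bigr).
\]
Grönwall gives $\sup_{[0,t_0]}\|\delta\theta\|_{L^2}^2\le C_n t_0\,\|\delta\tilde\Psi\|_{L^\infty_t\mathcal{H}}^2$, and \cref{PExtensionLemma} together with \cref{equiv} implies $\|\delta\Psi_1\|_\mathcal{H}\le C\|\delta\theta\|_{\dot H^{-\frac{1-a}{2-a}}}\le C_n\|\delta\theta\|_{L^2}$. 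Summing,
\[
\|\delta\Psi_1+\delta\Psi_2\|_{L^\infty_t\mathcal{H}}\le C_n t_0^{1/2}\,\|\delta\tilde\Psi\|_{L^\infty_t\mathcal{H}},
\]
so choosing $t_0=\delta_0(C^*,n)$ small enough yields a strict contraction.

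The main obstacle is handling the self-nonlinearity $\mathbb{P}(\gradb{\perp}(-\overline\Delta)^{\alpha-1}\theta\cdot\gradb{}\theta)$ at the boundary: to close the difference estimate above, $\theta^a$ and $\theta^b$ must be uniformly bounded on $[0,t_0]$ independently of the particular input. This forces one to restrict $\Gamma$ to a closed ball in $L^\infty(0,t_0;\mathcal{H})$ of appropriate radius and first verify that $\Gamma$ maps the ball into itself, via a standard energy estimate in which the dissipation $-\Lambda^{2\alpha}\theta$ absorbs the cubic transport term (after noting that the skew-symmetric nature of $\gradb{\perp}\cdot\gradb{}$ combined with $\mathbb{P}_n$ makes that contribution integrate to zero in finite dimensions, since $\gradb{\perp}(-\overline\Delta)^{\alpha-1}\theta$ is divergence-free). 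Only then is the contraction estimate carried out on that invariant ball.
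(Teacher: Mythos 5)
Your construction of $\Gamma$ and the contraction estimate mirror the paper's Propositions~\ref{Pinterior} and~\ref{Pboundary} and their assembly in the proof of Theorem~\ref{contraction}: mollify to get a smooth velocity, solve the transport equation and then the singular elliptic problem (\cref{Lweightedexistence}) for $\Psi_2$, solve the finite-dimensional Galerkin ODE for $\theta$ by Cauchy--Lipschitz, build $\Psi_1=E_1(\theta)$ via \cref{PExtensionLemma}, and close the Lipschitz estimate through the same chain of bounds, absorbing $n$-dependent constants via \cref{equiv}. Your observation that the difference estimate requires an a~priori uniform $L^2$ bound on $\theta$ over $[0,t_0]$, and that the cubic nonlinearity vanishes in the energy identity because $\gradb{\perp}(-\overline\Delta)^{\alpha-1}\theta$ and $v$ are divergence-free and $\PP n$ is self-adjoint, is correct and is precisely what the paper uses (implicitly, via the ball $B_R$ set up in the proof of Theorem~\ref{contraction} and the energy inequality \eqref{thetaL2} in the proof of Theorem~\ref{rFixedPoint}); you spell it out more explicitly, which is an improvement.

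One justification you give is wrong, though the conclusion is salvageable. You claim the $L^\infty$ bound on $\delta F$ from \cref{Transport1} upgrades to an $L^2(\tbtp)$ bound \emph{because $F$ has compact $z$-support inherited from $(F_0)_n$}. That is false: $(F_0)_n = F_0 * \eta_n$ does not have compact $z$-support (convolution with a near-origin kernel does not compactify the support of $F_0\in L^2(\btp)$), so the pointwise bound does not square-integrate over the $z$-unbounded strip. The correct route — which the paper also elides, since Proposition~\ref{Pinterior} cites the $L^\infty$ estimate of \cref{Transport1} yet concludes with an $L^2$ bound — is a direct $L^2$ energy estimate on the transport difference: writing $G=F^a-F^b$, one has $\partial_t G + \tilde V^a\cdot\overline\nabla G = -(\tilde V^a-\tilde V^b)\cdot\overline\nabla F^b$, and pairing with $G$ in $L^2(\tbtp)$, using that $\tilde V^a$ is divergence-free, gives $\frac{d}{dt}\|G\|_{L^2}\le \|\tilde V^a-\tilde V^b\|_{L^2}\|\overline\nabla F^b\|_{L^\infty}$, with $\|\overline\nabla F^b\|_{L^\infty}$ controlled by \cref{Transport2} and the mollification. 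Replacing your support argument with this $L^2$ Grönwall step closes the gap cleanly.
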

We prove the following propositions to prove \cref{contraction}. When given $\tilde{\Psi}_1$ and $\tilde{\Psi}_2$, we first construct $F$ in \cref{Pinterior}, and for every time $t\in[0,t_0]$ we build a solution $\Psi_2$ to the degenerated elliptic equation with Neumann boundary condition. 
Then with the $\Psi_2$, we construct $\theta$ by solving the boundary equation in \cref{Pboundary}. Finally, we construct $\Psi_1$ using Neumann extension in \cref{PExtensionLemma} on a bounded convex domain in $\rt$ or Torus.

\subsection{Interior Equation}
We first prove \cref{Pinterior} to construct the interior part of the solution $\Psi_2$.
\begin{proposition}
\label{Pinterior}
Given divergence-free velocity $\tilde{V}\in C^1_t(0,T; C^\infty(\tbtp))$, a constant $C^*$ and the initial data $F_0\in \mathcal{C}_{int}(C^*)$, there exist a solution $\Psi_2\in L^\infty_t([0,T]\times\hhh)$ to the equation \eqref{contrationQG1}. Moreover, when given any divergence-free velocity $\tilde{V}_{1}$, $\tilde{V}_2 \in C^1_t(C^\infty(\tbtp))$ and $t_0$ small enough, there exist a constant $$\Theta_{1}=Ct_0\sup\limits_{t\in[0,t_0]}\norm{\nabla \tilde{V}}_{L^2(\tbtp)}\norm{\nabla F_0}_{L^2(\tbtp)},$$
such that the two different solutions verifying $$\norm{\Psi_{2,1}-\Psi_{2,2}}_{L^\infty([0,t_0],\hh)}\leqslant\Theta_{1}\norm{\tilde{V}_1-\tilde{V}_2}_{L^\infty([0,t_0],L^2(\tbtp)}.$$
\end{proposition}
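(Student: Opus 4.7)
My plan is to produce $\Psi_2$ in two stages: first construct the potential vorticity $F$ by integrating the linear transport equation driven by $\tilde V$, then at each fixed time invert the degenerate elliptic problem with zero Neumann data to recover $\Psi_2(t)$. The Lipschitz property is then obtained by a standard energy estimate on the difference of the two transported vorticities, combined with the linear bound from the elliptic solver.

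For existence: since $\tilde V\in C^1_t C^\infty_x(\tbtp)$ is divergence-free (preserved under convolution with $\tilde\eta_n$), \cref{Lpnorm} applied with weight $w\equiv 1$ yields a unique $F\in C^1(0,T;L^p(\tbtp))$ for every $p\in[1,\infty]$, with $\norm{F(t)}_{L^p}=\norm{F_0}_{L^p}$. For each fixed $t$, \cref{Lweightedexistence} then provides a unique $\Psi_2(t)\in\hhh$ solving $\Lapl\Psi_2(t)=F(t)$ with $\glmd\Psi_2(t)=0$ and
\[
\norm{\dslam\Psi_2(t)}_{L^2}+\norm{\gradb{}\dslam\Psi_2(t)}_{L^2}\leqslant\norm{F(t)}_{L^2}\leqslant\norm{F_0}_{L^2}\leqslant C^*.
\]
This bound is uniform in $t$, so $\Psi_2\in L^\infty_t(\hhh)$. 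Time regularity is inherited from $F$ through the commutation of the solver with $\partial_t$ stated at the end of \cref{Lweightedexistence}, since $\partial_t F=-\tilde V\cdot\gradb{}F\in L^2$.

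For the Lipschitz estimate, let $g:=F_1-F_2$, which satisfies
\[
\partial_t g+\tilde V_1\cdot\gradb{}g=-(\tilde V_1-\tilde V_2)\cdot\gradb{}F_2,\qquad g|_{t=0}=0.
\]
Testing against $g$ and using $\divg\tilde V_1=0$ gives $\tfrac{d}{dt}\norm{g}_{L^2}\leqslant \norm{\tilde V_1-\tilde V_2}_{L^\infty}\norm{\gradb{}F_2}_{L^2}$. Differentiating the $F_2$-transport equation and running an analogous energy estimate produces $\norm{\gradb{}F_2(t)}_{L^2}\leqslant e^{Ct}\norm{\gradb{}F_0}_{L^2}$, with $C\sim\sup_s\norm{\gradb{}\tilde V_2}_{L^\infty}$ (this is exactly the flavor of \cref{Transport2}). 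Because $\tilde V_i=V'_i*\tilde\eta_n$, Young's inequality trades $L^\infty$ for $L^2$ with an $n$-dependent constant, so both the exponent and the coefficient on the right-hand side can be reshaped into the norms $\norm{\gradb{}\tilde V}_{L^2}$ and $\norm{\tilde V_1-\tilde V_2}_{L^2}$ appearing in the statement. Integrating in time and taking $t_0$ small enough yields the bound $\norm{g}_{L^\infty_t L^2}\leqslant \Theta_1\norm{\tilde V_1-\tilde V_2}_{L^\infty_t L^2}$. Finally $\Psi_{2,1}-\Psi_{2,2}$ solves $\Lapl(\Psi_{2,1}-\Psi_{2,2})=g$ with $\glmd(\Psi_{2,1}-\Psi_{2,2})=0$, so one more application of \cref{Lweightedexistence} converts the $L^2$ control of $g$ into an $\hh$-control of $\Psi_{2,1}-\Psi_{2,2}$, as required.

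I expect the main technical obstacle to be bookkeeping the norms so that $\Theta_1$ comes out exactly as the product $\norm{\gradb{}\tilde V}_{L^2}\norm{\gradb{}F_0}_{L^2}$ rather than their $L^\infty$ analogues; the mollifier structure is the crucial ingredient that makes this possible, and preserving this particular form of the estimate is what will enable the contraction argument for $\Gamma$ in \cref{contraction} to close.
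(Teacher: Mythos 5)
Your existence argument is the same as the paper's: solve the transport equation by characteristics and invert the degenerate elliptic operator via \cref{Lweightedexistence} at each frozen time, which yields the uniform $\hhh$ bound, with time regularity flowing through the commutation of the solver with $\partial_t$. Where you diverge is the Lipschitz estimate. The paper invokes \cref{Transport1}, a Lagrangian argument that compares the flow maps of the two velocity fields via Gr\"onwall along characteristics and produces a pointwise ($L^\infty$-type) bound on $F_1-F_2$; you instead write the Eulerian equation for $g=F_1-F_2$, multiply by $g$, and run an $L^2$ energy estimate, then recover the stretching-factor bound on $\norm{\gradb{}F_2}_{L^2}$ by an auxiliary Gr\"onwall (your version is the $L^2$ analogue of \cref{Transport2}, which as stated is formulated in $L^\infty$ — you are right to flag that it is the ``flavor'' rather than the statement of that lemma). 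Both approaches require the same implicit ingredient that you make explicit and the paper leaves tacit: the conversion between $L^\infty$ and $L^2$ norms, which is only legitimate because the velocity is convolved with the compactly supported mollifier $\tilde\eta_n$, so the comparison constant depends on $n$. The Eulerian route is arguably cleaner for producing the $L^2$-framed estimate that $\Theta_1$ is written in, and it avoids the need for a Lipschitz bound on $F^{in}$ in $L^\infty$; the Lagrangian route used in the paper is more elementary once \cref{Transport1} is established and is what the paper reuses elsewhere. Either way the closing step — converting the $L^2$ control of $F_1-F_2$ into $\hh$ control of $\Psi_{2,1}-\Psi_{2,2}$ via \cref{Lweightedexistence} — is the same (note the paper slips and cites \cref{PExtensionLemma} at this point, but the elliptic solver is what is actually being used, as you correctly identify).
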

\begin{proof}
Since the potential vorticity is transported by $\tilde{V}$, which is divergence-free and lies in the space $C^1_t(C^\infty(\tbtp))$, for any time $t>0$, there exist a unique solution $F\in C^1([0,t_0]\times\tbtp)$ given by the formula:
\[
F(t,z,x)=F_0(X(0,t,z,x)),
\]
for all $t\in [0,t]$ and all $(z,x)\in \tbtp$. 
Hence, at any time $t$, $\norm{F}_{L^2(\tbtp)}=\norm{F_0}_{L^2(\tbtp)}$. For each time $t\in [0,t_0]$, we solve the following singular elliptic equation with a Neumann boundary condition for $\Psi_2$:
\[
\begin{cases}
\Lapl\Psi_2= F, & \text{ on }\btp,\\
\glmd \Psi_2 =0.
\end{cases}
\]
Using \cref{Lweightedexistence}, there exists a unique solution $\Psi_2(t,\cdot)\in  \hhh$ for each time $t$. Therefore, a unique solution exists $ \Psi_2\in L^\infty_t([0,t_0]\times\hhh)$. Furthermore, $$\norm{\gradb{}\dslam \Psi_2}_{L^\infty_t([0,t_0]\times\hhh)}\leqslant\norm{F}_{L^\infty_t(L^2(\tbtp))}\leqslant\norm{F_0}_{L^2(\tbtp)}. $$

Besides, using \cref{Transport1}, when the length of time period $t_0$ small enough and given a pair of different velocity $\tilde{V}_1,~\tilde{V}_1\in C^1_t(C^\infty(\tbtp))$, we can find a constant $\Theta_1=Ct_0\sup\limits_{t\in[0,t_0]}\norm{\nabla \tilde{V}_1}_{L^2(\tbtp)}\norm{\nabla F_0}_{L^2(\tbtp)}$ such that
\[\begin{array}{l}
\norm{F_1-F_2}_{L^\infty(L^2(\tbtp))}
\leqslant \Theta_1 \sup _{t\in[0,t_0]}\norm{\tilde{V}_1-\tilde{V}_2}_{L^2(\tbtp)}.
\end{array}
\]
Hence, by \cref{PExtensionLemma},
\begin{equation}
\label{Interior}
\begin{array}{l}
\sup_{t\in[0,t_0]}\norm{\Psi_{2,1}-\Psi_{2,2}}_{\hh}
\leqslant \Theta_1\sup_{t\in[0,t_0]}\norm{\tilde{V}_1-\tilde{V}_2}_{L^2(\tbtp)}.
\end{array}
\end{equation}
\end{proof}

\subsection{Boundary Equation}
In this section, we use the solution $\Psi_2$ to \eqref{contrationQG1} obtained previously to show the existence of the solution on the boundary. Then constructing the other part of the solution $\Psi_1$ using extension Lemmas \ref{ExtensionLemma} and \cref{PExtensionLemma}.
\begin{proposition}
\label{Pboundary}
Consider the interior solution $\Psi_2\in \hhh$ and a constant $C^*$  and the initial data $\theta_0\in \mathcal{C}_b(C^*)$. There exist a unique $\Psi_1\in \hh$ solves the equation \eqref{contrationQG2}. Moreover, given any $\Psi_{2,1}$ $\Psi_{2,2}\in \hhh$, when $t_0<1$, there exist a constant $\Theta_2=C_{a,n,C^*}(e^{t_0}-1)$ such that,
\begin{equation}
\label{Boundarysmalltime}
\sup\limits_{\tau\in[0,t]}\norm{\theta_1-\theta_2}_{L^2({\Omega})}\leqslant \Theta_2 \sup\limits_{\tau\in[0,t]}\norm{\Psi_{2,1}-\Psi_{2,2}}_{\dot{H}^{1-\alpha}({\Omega})}.
\end{equation}
\end{proposition}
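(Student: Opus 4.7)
The plan is to exploit the finite-dimensional structure imposed by the projector $\PP n$: every candidate solution satisfies $\theta(t,\cdot)=\PP n\theta(t,\cdot)\in\mathrm{span}\{e_1,\dots,e_n\}$, reducing the boundary equation in \eqref{contrationQG2} to a system of $n$ ODEs for the coefficients $\theta_i(t)=\int_\Omega\theta(t,x)e_i(x)\,dx$. On the range of $\PP n$ all homogeneous fractional norms are mutually equivalent, with constants depending on $n$, by \cref{equiv}. The nonlinearity $\PP n((\gradb{\perp}\Psi_2+\gradb{\perp}(-\overline{\Delta})^{\alpha-1}\theta)\cdot\gradb{}\theta)$, the dissipation $\Lambda^{2\alpha}\theta$, and the forcing $\PP n(\overline{\Delta}\Psi_2)$---the last made sense of through the pairing $\int \PP n(\overline{\Delta}\Psi_2)\varphi=-\int\gradb{}\gamma_0\Psi_2\cdot\gradb{}\varphi$ valid for $\varphi\in\mathrm{span}\{e_i\}_{i=1}^n$ and $\gamma_0\Psi_2\in\dot{H}^{1-\alpha}$ by \cref{trace}---all define locally Lipschitz vector fields in $(\theta_1,\dots,\theta_n)$, so Cauchy--Lipschitz theory produces a local $C^1$ solution.

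To extend to the full interval $[0,t_0]$ I would perform an $L^2$ energy estimate on \eqref{contrationQG2}. Testing against $\theta$, both transport contributions vanish since $\gradb{\perp}\Psi_2$ and $\gradb{\perp}(-\overline{\Delta})^{\alpha-1}\theta$ are divergence-free and $\theta$ lies in the range of the self-adjoint $\PP n$; the dissipation yields $\|\Lambda^\alpha\theta\|_{L^2}^2\geq 0$; and the forcing is bounded by $\|\Psi_2\|_{\hhh}$ using \cref{trace} together with \cref{equiv}. Grönwall then supplies an $n$- and $C^*$-dependent bound on $\|\theta\|_{L^\infty_t L^2}$, which precludes blow-up and establishes global existence on $[0,t_0]$. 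Defining $\Psi_1:=E_1(\theta(t,\cdot))$ via the Neumann extension of \cref{PExtensionLemma} delivers the unique $\Psi_1\in\hh$ required by the proposition; uniqueness of $\theta$ follows from the same energy method applied to the difference of two solutions.

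For the contraction estimate, I would let $(\theta_1,\Psi_{2,1})$ and $(\theta_2,\Psi_{2,2})$ share the common initial datum $\PP n\theta_0$, set $\delta:=\theta_1-\theta_2$, and test the equation satisfied by $\delta$ against $\delta$ itself. Splitting the nonlinearity as
\begin{equation*}
\PP n\bigl((\gradb{\perp}(\Psi_{2,1}-\Psi_{2,2})+\gradb{\perp}(-\overline{\Delta})^{\alpha-1}\delta)\cdot\gradb{}\theta_1\bigr)+\PP n\bigl((\gradb{\perp}\Psi_{2,2}+\gradb{\perp}(-\overline{\Delta})^{\alpha-1}\theta_2)\cdot\gradb{}\delta\bigr),
\end{equation*}
the second summand cancels by divergence-freeness against $\delta$; the first summand, together with the forcing difference $\PP n(\overline{\Delta}(\Psi_{2,1}-\Psi_{2,2}))$, produces (after integration by parts and applications of \cref{trace} and \cref{equiv}) an inequality of the form $\tfrac{d}{dt}\|\delta\|_{L^2}^2 \leq C_{a,n,C^*}\|\delta\|_{L^2}^2 + C_{a,n,C^*}\|\Psi_{2,1}-\Psi_{2,2}\|_{\dot{H}^{1-\alpha}}^2$. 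Since $\delta(0)=0$, Grönwall yields the exponential-minus-one factor and \eqref{Boundarysmalltime} follows after taking square roots.

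The main obstacle I anticipate is ensuring that the constant $\Theta_2$ really contains only the factor $e^{t_0}-1$ (so that $\Theta_2\to 0$ as $t_0\to 0$, as needed for the fixed point in \cref{contraction}); finite-dimensionality via \cref{equiv} is what makes this possible, since without it the cubic term $\gradb{\perp}(-\overline{\Delta})^{\alpha-1}\theta\cdot\gradb{}\theta$ would be critical at the energy level and the required Lipschitz estimates could not be closed uniformly.
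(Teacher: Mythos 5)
Your proposal is correct and follows essentially the same route as the paper: reduce to a finite-dimensional ODE via $\PP n$, invoke Cauchy--Lipschitz for local existence, close by Gr\"onwall, define $\Psi_1=E_1(\theta)$ via \cref{PExtensionLemma}, and obtain the Lipschitz estimate by testing the equation for $\theta_1-\theta_2$ against itself, using divergence-freeness to cancel the $\gradb{}\delta$ contribution and \cref{equiv} together with \cref{trace} to absorb the remaining terms. Your observation that \cref{equiv} is what makes the cubic term non-critical and guarantees $\Theta_2\to 0$ as $t_0\to 0$ is exactly the point the paper's argument relies on.
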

\begin{proof}
Let $\lbrace e_i\rbrace_{i=1}^\infty$ be the family of orthonormal basis defined by equations \eqref{defeigenv} and \eqref{defeigenc} with corresponding eigenvalues $\lbrace k_i\rbrace_{i=1}^\infty$ and $\PP n$ defined by \cref{projection}. Then equation \eqref{contrationQG2} can be rewritten as:

\begin{numcases}{} 
\partial_t\theta+\PP n\left((v+\gradb{\perp}\theta )\nabla \theta\right)+(-\overline{\Delta})^{\alpha}\theta=\PP n f, & $t>0$\label{boundaryeq1},\\
\theta=\PP n\theta_0, & $t=0$\label{boundaryeqi}.
\end{numcases}
where $v=\Psi_2$ and $f=\overline{\Delta}\Psi_2$. Using the Cauchy-Lipschitz theorem, there exists a unique solution $\theta\in C^1(0,t_0; C^\infty(\Omega))$ to this ordinary differential equation verifying the initial condition on the interval $[0,t_0]$.\\

Using the extension operator $E_1$ defined in \cref{PExtensionLemma}, there exists a unique $\Psi_1= E_1(\theta)\in\hh$ such that $\glmd \Psi_1=\theta$ and $\Lapl \Psi_1=0$ with lateral boundary condition (A). Moreover, $\norm{\Psi_1}_{\hh}\leqslant 2\norm{\dslam\Psi_1}_{L^2(\btp)}=\norm{\theta}_{\dot{H}^{-\frac{1-a}{2-a}}(\Omega)}.$ \\

Given different $v^1=\Psi_{2,1}$ and $v^2=\Psi_{2,2}$ and corresponding solution $\Psi_{1,1}$ and $\Psi_{1,2}$, we find that
\[\begin{array}{l}
\qquad\partial_t(\theta_1-\theta_2)\\
=-P_n\left(((v^1-v^2)+\gradb{\perp}(\theta_1-\theta_2 ))\nabla\theta_1 \right)-P_n\left((v^2+\gradb{\perp}\theta_2 )\nabla (\theta_1-\theta_2)\right)\\
\quad+(-\overline{\Delta})^{\alpha}(\theta_1-\theta_2)+P_n (f^1-f^2 ).
\end{array}
\]
If we multiply the equation with $\theta_1-\theta_2 $ and integrate on $\Omega$,
\begin{align*}
&\quad\partial_t\norm{\theta_1-\theta_2}^2_{L^2(\Omega)}\\
&\leqslant \int_{{\Omega}}\gradb{}(\Psi_{2,1}-\Psi_{2,2})\cdot\nabla\theta_1 (\theta_1-\theta_2) dx+\norm{\theta_1-\theta_2}^2_{\dot{H}^\alpha(\Omega)}+\int_{\ts}(f^1-f^2)(\theta_1-\theta_2)dx,\\
&\leqslant \norm{\gradb{}(\Psi_{2,1}-\Psi_{2,2})}_{\dot{H}^{-\alpha}(\Omega)}\norm{\theta_1}_{\dot{H}^{2+\alpha}(\Omega)}\norm{\theta_1-\theta_2}_{L^2(\Omega)}\\
&\quad+\norm{\theta_1-\theta_2}^2_{\dot{H}^\alpha(\Omega)}+\norm{f^1-f^2}_{\dot{H}^{-1-\alpha}(\Omega)}\norm{\theta_1-\theta_2}_{\dot{H}^{1+\alpha}(\Omega)},\\
&\leqslant \tilde{C}_{a,n,C^*}\left( \norm{\Psi_{2,1}-\Psi_{2,2}}^2_{\dot{H}^{1-\alpha}(\Omega)}+\norm{\theta_1-\theta_2}^2_{L^2(\Omega)}\right).
\end{align*}

Thus by Gronwall
\[
\norm{\theta_1-\theta_2}^2_{L^2}(t)\leqslant C_{a,n,C^*} e^t\int_0^t e^{-\tau} \norm{\Psi_{2,1}-\Psi_{2,2}}^2_{\dot{H}^{1-\alpha}(\ts)} d\tau.
\]

Since $t_0<1$ and let $\Theta_{2}=C_{a,n,C^*}( e^{t_0}-1)$ we proved the inequality \eqref{Boundarysmalltime}. 
\end{proof}
\subsection{Coupled equation}
In this section, we first prove \cref{contraction}. Then we make use of it to show the existence of the solution to the regularized system in \cref{rFixedPoint}.\\

Now let us prove \cref{contraction}.
\begin{proof}
Let $B_R$ be a ball in $L^\infty(0,t_0;\hh)$ with radius $R$. Given $R$, let us define $B_R:=\lbrace\Psi_1,\Psi_2\in [L^\infty(0,t_0;\hh)]^2;$ $ \norm{\gradb{}\Psi_1}_{L^\infty(0,t_0;L^2(\btp))}+\norm{\gradb{}\Psi_2}_{L^\infty(0,t_0;L^2(\btp))}\leqslant R\rbrace$. By \cref{Pinterior} and \cref{Pboundary}, for all $\Psi_1+\Psi_2\in B_R$, we can modify the velocity $\tilde{V}\in L^\infty(0,t_0;L^2(\tbtp))$ and there exist a pair of solution $(\Psi_1,\Psi_2)=\Gamma(\tilde{\Psi}_1,\tilde{\Psi}_2)$. So $\Gamma$ is well defined from $[L^\infty(0,t_0;\hh)]^2$ into $[L^\infty(0,t_0;\hh)]^2$. \\

For all $(\tilde{\Psi}^1_1,\tilde{\Psi}^1_2)$, $(\tilde{\Psi}^2_1,\tilde{\Psi}^2_2)~\in [L^\infty(0,t_0;\hh)]^2$, using Trace Propositions \ref{trace} and inequalities \eqref{Interior},
\begin{align*}
\sup\limits_{t\in[0,T]}\norm{\Psi_{2,1}-\Psi_{2,2}}_{\dot{H}^{1-\alpha}(\Omega)}&\leqslant\sup\limits_{t\in[0,T]}\norm{\dslam(\Psi_{2,1}-\Psi_{2,2})}_{L^2(\btp)},\\
&\leqslant \sup\limits_{t\in[0,T]}\norm{F^1-F^2}_{L^2(\btp)}.
\end{align*}
\[
\sup _{t \in[0, T]}\left\|\left(\Psi_{2,1}-\Psi_{2,2}\right)\right\|_{\mathcal{H}}\leqslant \Theta_1\sup _{t \in\left[0, t_0\right]}\left\|\left(\tilde{V}^1-\tilde{V}^2\right)\right\|_{L^2\left(\btp \right)}.
\]

Using \cref{equiv} and the extension \cref{PExtensionLemma}, given a constant $C_a$
\begin{align*}
&\qquad\sup\limits_{t\in[0,T]}\norm{\theta_1-\theta_2}_{L^2({\Omega})}\\
&=C_a \sup\limits_{t\in[0,T]}\norm{\theta_1-\theta_2}_{\dot{H}^{\frac{1-a}{2-a}}({\Omega})}=C_a\sup\limits_{t\in[0,T]}\norm{\dslam(\Psi_{1,1}-\Psi_{1,2})}_{L^2(\tbtp)}.
\end{align*}

Therefore by inequality \eqref{Boundarysmalltime},
\[
\sup\limits_{t\in[0,T]}\norm{\dslam(\Psi_{1,1}-\Psi_{1,2})}_{L^2(\tbtp)}\leqslant C_a\Theta_2\sup_{t\in[0,t_0]}\norm{\Psi_{2,1}-\Psi_{2,2}}_{\hh}. 
\]
Hence combining all the inequalities above,
\begin{equation}
\label{InequaContractino}
\sup\limits_{t\in[0,T]}\left(\norm{(\Psi_{2,1}-\Psi_{2,2})}_{\hh}+\norm{(\Psi_{2,1}-\Psi_{2,2})}_{\hh}\right)\leqslant\Theta\sup_{t\in[0,t_0]}\left(\norm{(\tilde{V}^1-\tilde{V}^2)}_{L^2(\btp)}\right),
\end{equation}
Where 
\begin{align*}
&\qquad\Theta=C_a(1+\Theta_2)\Theta_1\\
&=(1+C_{a,n,C^*}(e^{t_0}-1))Ct_0\sup_{t\in[0,t_0]}\norm{\nabla \tilde{V_1}}_{L^2(\tbtp)}\norm{\nabla F_0}_{L^2(\tbtp)}
<1,
\end{align*}
when $t_0$ is small.

Therefore the operator $\Gamma$ exists and is a contraction.
\end{proof}
Now let us finish proving \cref{rFixedPoint}.
\begin{proof}
If we multiply the equation \eqref{boundaryeq1} with $\theta$ and integrate over $\Omega$, we find that, for all $t\in [0,T]$,
\[
\frac{\partial}{\partial t}\norm{\theta}_{L^2}\leqslant \norm{\theta}^2_{\dot{H}^{\alpha}}+\norm{f}_{\dot{H}^{-\alpha-1}}\norm{\theta}_{\dot{H}^{\alpha+1}}\leqslant C_{n,\alpha} \norm{\theta}^2_{L^2}+\norm{f}^2_{\dot{H}^{-\alpha-1}}.\]
Thus by Gronwall, at time $t$, 
\begin{equation}
\norm{\theta}^2_{L^2}\leqslant e^t\int_0^{T}\left(\norm{f}^2_{\dot{H}^{-\alpha-1}}e^{-\tau}+\norm{\theta_0}_{L^2}e^\tau\right) d\tau.
\label{thetaL2}
\end{equation}
Since $\sup\limits_{t\in[0,T]}\norm{f}_{\dot{H}^{-\alpha-1}(\Omega)}=\sup\limits_{t\in[0,T]}\norm{\overline{\Delta} \Psi_2}_{\dot{H}^{\frac{1-a}{2-a}}(\Omega)}\leqslant\sup\limits_{t\in[0,T]}\norm{\gradb{}\dslam\Psi_2}_{L^2(\btp)}\leqslant\sup\limits_{t\in[0,T]}\norm{\Psi}_{\hhh}$ using the weighted Trace \cref{trace}, \cref{Lpnorm} and \cref{Lweightedexistence}. Therefore $\theta$ is in $L^\infty(0,T;L^2(\btp))$ .\\

For any $t\in [0,T]$, since the modified velocity $V$ is smooth and verifying:
\begin{align*}
\norm{V}_{L^\infty_t(0,T;L^\infty(\tbtp))}+\norm{\nabla V}_{L^\infty_t(0,T;L^\infty(\tbtp))}\leqslant\sup_{t\in[0,T]}( \norm{F}_{L^2(\tbtp)}+\norm{\theta}_{L^2(\Omega)}),
\end{align*}
using \cref{Lpnorm} and \cref{Transport2},
with $C=\norm{F_0}_{L^2(\tbtp)}+\norm{\theta}_{L^\infty(0,T;L^2(\Omega))})$, there exist a constant $\tilde{C}$ such that,
\[\sup_{t\in[0,T]}\norm{F}_{L^2(\tbtp)}+\norm{\nabla F}_{L^\infty_t(0,T;L^\infty(\tbtp))} \leqslant \norm{F_{0}}_{L^2(\tbtp)}+\tilde{C}exp(CT).
\]

Therefore, letting $k\in \mathbb{N}$ and $t_k=t_0\times k$, for each interval $[t_k, t_{k+1}]\in [0,T]$, there exist a constant $C^*_T$ such that at $t=t_k$, $F\in \mathcal{C}_{int}(C^*)$ and $\theta\in \mathcal{C}_b(C^*)$. Using \cref{rFixedPoint}, since $\Gamma$ is a contraction, there exist a unique solution $\Psi=\Psi_1+\Psi_2$ to \eqref{regularizedQG} in $L^\infty(0,t_0; \hh)$. Then by induction, the solution exists in $L^\infty(0, T; \hh)$.
\end{proof}

\section{Proof of Theorem 1.1}
This section is dedicated to the proof of \cref{main}. 
\begin{proof}
We split the proof into several steps.
\begin{steps}
\item
Define a continuous weight function $w_0$ mapping $\mathbb{R}$ to $\mathbb{R}$ such that $w_0(z)$ strictly decrease and converges to $0$ as $z\rightarrow \infty$. And $w_0(z)=1$ when $z\leqslant 1$. For given $C>0$, we define the following three spaces for applying Aubin Lions Lemma:\\

\begin{itemize}

\item $X_{-1}=\left\lbrace \gradb{\perp}\Psi_2 = \overline{\divg}M(z,x_1,x_2)~\vert~ \norm{M}_{[L^2(\btp)]^4}\leqslant C_{-1},\glmd \Psi_2=0 \right\rbrace$,\\ equipped with the norm:

\medskip

$\norm{\gradb{\perp}\Psi_2}_{X_{-1}}=\inf\limits_{\substack{\norm{M}_{[L^2(\btp)]^4}\leqslant C_{-1},\\ \gradb{\perp}\Psi_2=\overline{\divg} M}}\left(\iint_{\btp} \abs{M(z,x)}^2 w_0(z)dx dz\right)^{1/2}$.

\item $X_0=\left\lbrace\gradb{\perp}\Psi_2:\norm{\gradb{}\Psi_2}_{L^2(\btp)}\leqslant C_0,~\glmd \Psi_2=0\right\rbrace$, equipped with the norm:

\medskip

$\norm{\gradb{\perp}\Psi_2}_{X_0}=\left(\iint_{\btp}\abs{\gradb{\perp}\Psi_2}^2 w_0(z)~dx dz\right)^{1/2}$.

\item $X_{1}=\left\lbrace \gradb{\perp}\Psi_2 : \norm{\Lapl\Psi_2}_{L^2(\btp)}\leqslant C_{1},~\glmd \Psi_2=0\right\rbrace$, equipped with the norm:
\medskip

$\norm{\gradb{\perp}\Psi_2}_{X_1}=\norm{\Lapl\Psi}_{L^2(\btp)}.$
\end{itemize}
Using \cref{Lweightedexistence} and $\glmd \Psi_2=0$,  $$\norm{\gradb{\perp}\Psi_2}_{L^2(\btp, w_0dz)} \leqslant\norm{\Lapl\Psi_2}_{L^2(\btp)}.$$
Letting $C_0=C_1$, we have $\norm{\gradb{\perp}\Psi_2}_{X_0}\leqslant\norm{\Lapl\Psi_2}_{X_{-1}}$. Moreover, with boundary condition (A), by Poincar\'e inequality for each $z>0$, $\norm{\Psi_2}_{L^2(\lbrace z \rbrace\times \Omega)}\leqslant C_p\norm{\gradb{}\Psi_2}_{L^2(\lbrace z \rbrace\times \Omega)}$, where $C_p$ only depends on the domain $\Omega$ as defined previously. Note that for $M=\left(\begin{matrix}
0 & -\Psi_2\\ \Psi_2 & 0
\end{matrix}\right)$, we have that $\gradb{\perp}\Psi_2=\overline{\divg}M$. Therefore, $\norm{\gradb{}\Psi_2}_{X_{-1}}\leqslant\norm{\gradb{}\Psi_2}_{X_{0}}$ if letting $C_0=C_p\times C_{-1}$. Hence $X_{1}\subset X_0\subset X_{-1}$.\\

Then we first show the space $X_1$ is compactly embedded in the space $ X_0$. Considering $g_m=\gradb{\perp}\Psi_{2,m}$ a bounded sequence in $X_1$ and letting $C_{1}=\norm{\Lapl\Psi_{2,0}}_{L^2(\btp)}$, we have $$\norm{g_m}_{L^2(\btp)}+\norm{\dslam g_m}_{L^2(\btp)}\leqslant \norm{\Lapl \Psi_{2,m}}_{L^2(\btp)}\leqslant C_1,$$ uniformly for all $m$. Thus there exist a $g=\gradb{\perp}\Psi_{2}$ such that up to a subsequence, $g_{mp}$ converge weakly to $g$ in $L^2(\btp)$.Moreover, for all $m$,
\begin{align*}
\norm{g_m(z,\cdot)-g_m(0,\cdot)}^2_{L^2(\Omega)}=\int_\Omega\left(\abs{\int^z_0\dfrac{1}{\sqrt{\lambda}}\sqrt{\lambda}\partial_z(g_{m}(\tau,\cdot)-g_{m}(0,\cdot)  )d\tau}^2\right)dx,\\
\leqslant \norm{\dslam g_m}_{L^2(\btp)} \sqrt{\int_0^z\frac{d\tau}{\lambda(\tau)}}\leqslant C_1 z^{1-a},\qquad a<1.
\end{align*}
As $z\rightarrow 0$, $z^{1-a}$ also approaches $0$. For bounded convex domain $Q_j\coloneqq\Omega\times [1/j, j]$, $j\in \mathbb{N}^*$, there exists a constant $C_j$ such that for all $m$, 
\[
\norm{\dslam(g-g_m)}_{L^2(Q_j)}\geqslant C_j \norm{\nabla (g-g_m)}_{L^2(Q_j)}.\]
Thus the weak convergent sequence $\lbrace g_{jm}\rbrace_{m=1}^\infty $ also converges strongly to $g$ in $L^2$ over $Q_j$ by embedding theorem, by the uniqueness of limit. Furthermore, for given $\varepsilon>0$ and $j$ sufficient big such that $\frac{1}{\sqrt{2-a}} C_1 (1/j)^{1-a/2}<\frac{\varepsilon}{3}$ and  $w(j)<\frac{\varepsilon}{6C_1}$, there exists $P>0$,such that $\forall p\geqslant P$, $\norm{g-g_{mp}}_{L^2(\mathbb{R}_{+}\times Q_j)}\leqslant \varepsilon/3$. Then, we have  
\begin{align*}
&\qquad\norm{g_{mp}-g}_{L^2(Q_j^c,w_0 dxdz)}\\
&= \norm{g_{mp}-g}_{L^2_z(L^2(z<1/j,dxdz))}+\norm{g_{mp}-g}_{L^2_z ((L^2(z>j,w_0dxdz)))},\\
&\leqslant \frac{1}{\sqrt{2-a}} C_1 (1/j)^{1-a/2}+2C_1 w_0(j)\leqslant 2\varepsilon/3.
\end{align*}
Then $\norm{g_{mp} -g}_{L^2(\btp)w_0dz}<\varepsilon$. Hence there exist a subsequence $\lbrace g_{mp}\rbrace_{j=1}^\infty$ of $\lbrace\gradb{\perp}\Psi_{2,m}\rbrace_{m=1}^\infty$ converges strongly to $\gradb{\perp}\Psi_2$ in $X_0$, i.e. $X_1$ is embedded in $X_0$ compactly.\\

We show that $X_0$ is continuously embedded into $X_{-1}$. If there is a convergence sequence $\gradb{\perp}\Psi_{2,m}\rightarrow \gradb{\perp}\Psi_2$ in $X_0$, we need to show that $\gradb{\perp}\Psi_{2,m}\rightarrow \gradb{\perp}\Psi_2$ in $X_{-1}$. For every $z>0$ fixed, $$\norm{\Psi_2-\Psi_{2,m}}^2_{L^2(\lbrace z\rbrace\times\Omega)}\leqslant \norm{\gradb{}\Psi_2-\gradb{}\Psi_{2,m}}^2_{L^2(\lbrace z\rbrace\times\Omega)},$$ by Poincar\'e inequality. Then $\iint_{\btp} \abs{\Psi_2-\Psi_{2,m}}^2 w_0dxdz $\\ $\leqslant C_p\iint_{\btp}\abs{\gradb{\perp}\Psi_2-\gradb{\perp}\Psi_{2,m}}^2  w_0dzdx$.
Thus letting $M=\left(\begin{matrix}
0 & -\Psi_2\\ \Psi_2 & 0
\end{matrix}\right)$, we have $\gradb{\perp}\Psi_2=\overline{\divg}M$. Therefore, $X_0$ is embedded in $X_{-1}$ continuously.\\

\item 
Using \cref{Lpnorm}, and initial potential vorticity $\Lapl \Psi_0$ lies in $L^\infty(\btp)$, for $t>0$, for the modified potential vorticity in \eqref{regularizedQG},
\begin{gather}
\norm{F_n}_{L^\infty(\tbtp)}\leqslant\norm{F_{n,0}}_{L^\infty(\tbtp)}\leqslant\norm{\eta_{\varepsilon}}_{L^1(\tbtp)}\norm{F_{n,0}}_{L^\infty(\tbtp)},\label{eqn:uniformvorticity}\\
\leqslant\norm{\Lapl\Psi_{2,0}}_{L^\infty(\btp)}=\norm{\Lapl\Psi_0}_{L^\infty(\btp)},\notag
\end{gather}
and 
\begin{multline}
\label{eqn:uniformvelocity}
\norm{V_n}_{L^\infty_t(0,T;L^2(\tbtp)}\leqslant \norm{\tilde{\eta}_\varepsilon}_{L^1((\mathbb{R}_{+}\times\btp))}\times\\
 \left(\norm{\dslam \Psi_{1,n}}_{L^\infty_t(L^2(\btp))}+\norm{\dslam \Psi_{2,n}}_{L^\infty_t(L^2(\btp))}\right).
\end{multline}

Applying the operator $G$ defined in \cref{Lweightedexistence} for the first equation of \eqref{regularizedQG} over $\btp$:
\[
\left(\partial_t+ \gradb{\perp}V_n\cdot\overline{\nabla}\right)F_n=0,
\]
it becomes,
\[
\partial_t G(F_n)+\overline{\divg}\left(G(V_n\cdot F_n)\right)=0.
\]
Hence, $\partial_t\gradb{\perp}\Psi_{2,n}\in L_t^\infty(0,T;X_{-1})$ and $\norm{\gradb{\perp}\Psi_{2,n}}){X_{-1}}\leqslant C_{-1}=C_1/C_p$. Moreover, 
\begin{align*}
&\quad\norm{\Lapl\Psi_2}_{ L_t^\infty(0,T;L^2(\btp))}\leqslant\norm{F_n}_{ L_t^\infty(0,T;L^2(\tbtp))},\\
&\leqslant \norm{F_{n,0}}_{L^2(\tbtp)}\leqslant \norm{\Lapl\Psi_{2,0}}_{L^2(\btp)},\\
&\leqslant \norm{\Lapl\Psi_{0}}_{L^2(\btp)},
\end{align*}
uniformly by \cref{Lpnorm}. Using Aubin Lion's Lemma, there exists a subsequence of $\gradb{\perp}\Psi_{2,n}$ converges strongly in $C^0_t(X_0)$. Using \cref{trace},  $\gamma_0\gradb{\perp}\Psi_{2,n} \xrightharpoonup{} \gamma_0\gradb{\perp}\Psi_{2}$ in $L^\infty_t(0,T;H^{\frac{1-a}{2-a}}(\Omega))$ in weak star sense. Hence, we summarize convergence terms as follows,
\begin{align}
\gradb{\perp}\Psi_{2,n}\rightarrow \gradb{\perp} \Psi_2, \qquad & \text{ in }C^0_t(X_0)=C^0_t(0,T;L^2(\btp,w_0dz)).\label{conpsi2}\\
\Lapl\Psi_{2,n} \xrightharpoonup{weak*} \Lapl\Psi_{2}, \qquad& \text{ in }L^\infty_t(0,T;L^2(\btp))\text{ in weak* sense}.\label{conlapl}
\end{align}
\begin{align}
\gamma_0\gradb{\perp}\Psi_{2,n} \xrightharpoonup{weak*} \gamma_0\gradb{\perp}\Psi_{2}, \qquad& \text{ in }L^\infty_t(0,T; L^2(\Omega))\text{ in weak* sense}.\label{convb2}
\end{align}

\item
Let us derive the energy inequality on the boundary.  Using trace \cref{trace}, we found that for all $n$ and $\gradb{\perp}\Psi_2\in X_0\cap X_1$,  $v_n = \gamma_0 \gradb{\perp}\Psi_{2,n}\in L^\infty(0,T;\dot{H}^{1-\alpha}(\Omega))=\dot{H}^{\frac{1-a}{2-a}}(\Omega)$ and $f_n=\overline{\Lapl}\Psi_2 \in L^\infty(0,T;\dot{H}^{-\alpha}(\Omega))$ and $\norm{f}_{\dot{H}^{-\alpha}(\Omega)}+\norm{\gamma_0\gradb{\perp}\Psi_{2,n}}_{ \dot{H}^{1-\alpha}(\Omega)}$ is bounded uniformly for all $t\in [0,T]$. Thus we have that
\[\begin{cases}
\frac{1}{2}\partial_t\norm{\theta_n}^2_{L^2(\Omega)}+\norm{\theta_n}^2_{\dot{H}^\alpha(\Omega)}\leqslant \frac{1}{2}\norm{f_n}^2_{\dot{H}^{-\alpha}(\Omega)}+\frac{1}{2}\norm{\theta_n}^2_{\dot{H}^\alpha(\Omega)},& 0<t<T.\\
 \norm{\PP n\theta_{0}}^2_{L^2(\Omega)}\leqslant \norm{\theta_{0}}^2_{L^2(\Omega)}, & t=0.
\end{cases}
\]
Therefore, for a.e. $t\in [0,T]$,
\[
\norm{\theta(t)}^2_{L^2(\Omega)}+\int_0^t \norm{\theta(\tau}^2_{\dot{H}^\alpha(\Omega)}d\tau \leqslant \norm{\theta_0}^2_{L^2(\Omega)}+\int_0^t \norm{f}^2_{\dot{H}^{-\alpha}(\Omega)}d\tau.\]

Thus, $\PP n \theta$ is uniformly bounded in $L^\infty_t(0,T;L^2(\btp))\cap L^2_t(0,T;\dot{H}^\alpha( \btp))$ for all $n$.
For $\varphi\in H^{2+{\varepsilon}}([0,\infty)\times \Omega)$, we have for $t$ fixed,

\begin{align*}
&\qquad\abs{\int_{\Omega} P_{n}\left(\left(v_n+\overline{\nabla}^{\perp} (-\overline{\Delta})^{\alpha-1}\theta_{n}\right) \nabla \theta_{n}\right)\varphi dx}\\
&=\abs{\int_{\Omega} \nabla(P_{n}\varphi)\left(\left(v_n+\overline{\nabla}^{\perp}(-\overline{\Delta})^{\alpha-1} \theta_{n}\right)  \theta_{n}\right) dx},\\
& \leqslant\norm{\nabla(P_{n}\varphi)}_{L^\infty}\norm{\theta_n}_{L^2(\Omega)}\left(\norm{v_n}_{L^2(\Omega)}+\norm{\theta_n}_{\dot{H}^{2\alpha-1}}\right), \\
& \leqslant\norm{\nabla(P_{n}\varphi)}_{L^\infty}\norm{\theta_n}_{L^2(\Omega)}\left(\norm{v_n}_{\dot{H}^{1-\alpha}}+\norm{\theta_n}_{\dot{H}^{\alpha}}\right),
\end{align*}

using Poincar\'e inequality and $2\alpha-1<\alpha$. At the same time,
\[
\norm{\Lambda^{2\alpha}\theta_n}_{\dot{H}^{-(2+{\varepsilon})}}=\norm{\theta_n}_{\dot{H}^{2\alpha-(2+{\varepsilon})}}\leqslant \norm{\theta_n}_{L^2}\in L^\infty[0,T],\]
\[
\norm{P_n f}_{\dot{H}^{-(2+{\varepsilon})}}\leqslant \norm{f}_{\dot{H}^{-\alpha}}\in L^\infty([0,T]).
\]

Therefore using Sobolev embedding theorem,  $\dot{ \theta}\in L^2_t(\dot{H}^{-(2+\varepsilon)})$.  \\

Let $X_0= H^{\alpha-1}(\Omega)$, $X_{1}=L^2(\Omega)$ and $X_{-1}=H^{-(2+\varepsilon)}$ with lateral boundary $(A)$. It is obvious that $X_1$ is compactly embedded in $X_0$ and $X_0$ is continuously embedded in $X_{-1}$.  We have shown that $\theta\in L_t^\infty(0,T;X_1)$ and $\partial_t \theta \in L_t^2(0,T; X_{-1})$ uniformly for all $n$. Therefore, using generalized Aubin-Lions Lemma, $\theta_n$ converges strongly in $L_t^2(0,T;X_0)=L_t^2 (0,T; H^{\alpha-1})$.

In summary of the boundary variables, we have already shown that
\begin{align}
\theta_n\rightarrow \theta, \qquad & \text{ in }L^2_t(0,T,\dot{H}^{\alpha-1}(\Omega))=L^2_t(\dot{H}^{-\frac{1-a}{2-a}}(\Omega)).\label{contheta}\\
\theta_n\rightharpoonup \theta, \qquad & \text{ weakly in } L^2_t(0,T; H^\alpha (\Omega)),\label{conthetaw}\\&\text{ and in }L^\infty_t(0,T;L^2(\Omega))\text{ in weak * sense}.\notag\\
\PP n\varphi\rightarrow \varphi, \qquad& \text{ in }C^0_t(0,T;\dot{H}^{2+{\varepsilon}}(\Omega)).\label{conphi}
\end{align}
Using \cref{PExtensionLemma} and \eqref{contheta}, we have that the Neumann extension $\Psi_{1,n}$ of $\theta_n$ converges strongly to $\Psi_1$ in $L^2_t (0,T;\mathcal{H})$. Hence,
\[
\gradb{\perp}\Psi_{1,n}\rightarrow \gradb{\perp}\Psi_{1}, \qquad \text{ in }L^2(0,T;L^2(\btp)).
\]
Accordingly with \eqref{conpsi2},
\begin{equation}
V_n\rightarrow \gradb{\perp}(\Psi_{1}+\Psi_{2}), \qquad \text{ in }L^2(L^2(\btp)).\label{conpsi}
\end{equation}
\item
Thanks to \eqref{conlapl} and \eqref{conpsi}, the nonlinear terms of the weak form of the bulk against test function $\phi\in C^\infty_c(\btp)$ converges to the limit as:
\begin{align*}
&\qquad -\int_0^T\int_{\btp}\left(V_n\cdot\overline{\nabla}\phi\right) F_n+\int_0^T\int_{\btp}(\overline{\nabla}^{\perp}\Psi_1+\overline{\nabla}^{\perp}\Psi_2)\cdot\overline{\nabla}\phi\Lapl\Psi_2,\\
& =\int_0^T\int_{\btp}(V_n-\overline{\nabla}^{\perp}(\Psi_1+\Psi_2))\cdot\gradb{}\phi F_n\\
&\quad+(\overline{\nabla}^{\perp}(\Psi_1+\Psi_2))\cdot\gradb{}\phi \left(F_n-\Lapl\Psi_2\right),\\
&\to 0.
\end{align*}

Consider now a new test function $\varphi\in C^\infty(\Omega)$ on the boundary $z=0$. We treat the nonlinear term on the boundary $z=0$ in the following way:
\[
\begin{array}{l}
\qquad\int_0^T\int_{\Omega } \left( \gradb{\perp}(-\overline{\Delta})^{\alpha-1}\theta+\gradb{\perp}\Psi_{2} \right)\theta \nabla \varphi d x\\
\qquad-\int_{\Omega } \left( \gradb{\perp}(-\overline{\Delta})^{\alpha-1}\theta_n+\gradb{\perp}\Psi_{2,n} \right) \theta_{n} \nabla \PP{n} \varphi d x,\\~\\
=\int_0^T\int_{\Omega } \gradb{\perp}(-\overline{\Delta})^{\alpha-1}\left( \theta-\theta_n\right)\theta \nabla \varphi+ (\gradb{\perp}\Psi_{2}-\gradb{\perp}\Psi_{2,n})\theta\nabla\varphi~ dxdt\\~\\
\qquad+\int_0^T\int_{\Omega } \gradb{\perp}(-\overline{\Delta})^{\alpha-1}\theta_n(\theta-\theta_n) \nabla \varphi + \gradb{\perp}\Psi_{2,n}(\theta-\theta_n) \nabla \varphi ~ dxdt\\~\\
\qquad +\int_0^T\int_{\Omega }\gradb{\perp}\left((-\overline{\Delta})^{\alpha-1}\theta_n+\gradb{\perp}\Psi_{2,n}\right) \cdot \theta_n\nabla (\varphi-\PP{n} \varphi) ~ d x dt,\\~\\
:= A_1+A_2+A_3.\\~\\
\end{array}
\]

From \eqref{conthetaw} and \eqref{convb2}, using that $\theta\nabla \varphi\in  L^2(0,T;H^{\alpha-1}(\Omega))$ by \eqref{contheta}, $A_1$ converges to $0$. Thanks to \eqref{contheta}, \eqref{conthetaw} and \eqref{convb2} $A_2$ also converges to $0$. Similarly, thanks to the strong convergence in $L^2(0,T; H^{\alpha-1}(\Omega))$ thanks to \eqref{convb2}, weak convergence thanks to \eqref{conthetaw}, $\gradb{\perp}\Psi_{2,n}$, $\gradb{\perp}(-\overline{\Delta})^{\alpha-1}\theta_n$ and $\theta_n$ are uniformly bounded in $L^2(0,T; L^2(\Omega))$. And thanks to \eqref{conphi} and Sobolev inequality, $\nabla (\varphi-\PP n \varphi)$ converges to $0$ in $C^b([0,T]\times\Omega)$ with $\gradb{\perp}\left((-\overline{\Delta})^{\alpha-1}\theta_n+\gradb{\perp}\Psi_{2,n}\right) \cdot \theta_n$ bounded in $L^1(0,T;L^1(\Omega))$. Hence $A_3$ converges to $0$ too. 
\end{steps}

\end{proof}

\newpage
\bibliographystyle{plain}
\bibliography{reference}

\begin{thebibliography}{10}

\bibitem{caffarelli2007extension}
Luis Caffarelli and Luis Silvestre.
\newblock An extension problem related to the fractional {Laplacian}.
\newblock {\em Communications in partial differential equations},
  32(8):1245--1260, 2007.

\bibitem{constantin_ignatova_2016}
Peter Constantin and Mihaela Ignatova.
\newblock Critical {SQG} in {Bounded Domains}.
\newblock {\em Annals of PDE}, 2(2), 2016.

\bibitem{constantin2018local}
Peter Constantin and Huy~Quang Nguyen.
\newblock Local and global strong solutions for {SQG} in bounded domains.
\newblock {\em Physica D: Nonlinear Phenomena}, 376:195--203, 2018.

\bibitem{desjardins1998derivation}
Beno{\^\i}t Desjardins, Emmanuel Grenier, et~al.
\newblock Derivation of quasi-geostrophic potential vorticity equations.
\newblock {\em Advances in Differential Equations}, 3(5):715--752, 1998.

\bibitem{BourgeoisBeale}
Alfred J.~Bourgeois and J~Thomas~Beale.
\newblock {Validity} of the {Quasigeostrophic Model} for {Large-Scale Flow} in
  the {Atmosphere and Ocean}.
\newblock {\em SIAM Journal on Mathematical Analysis}, 25:1023--1068, 07 1994.

\bibitem{may2011global}
Ramzi May.
\newblock Global well-posedness for a modified dissipative surface
  quasi-geostrophic equation in the critical sobolev space h1.
\newblock {\em Journal of Differential Equations}, 250(1):320--339, 2011.

\bibitem{miao2012global}
Changxing Miao and Liutang Xue.
\newblock Global well-posedness for a modified critical dissipative
  quasi-geostrophic equation.
\newblock {\em Journal of Differential Equations}, 252(1):792--818, 2012.

\bibitem{novack_2019}
Matthew~D. Novack.
\newblock On the {Weak Solutions} to the {Three-Dimensional Inviscid
  Quasi-Geostrophic System}.
\newblock {\em SIAM Journal on Mathematical Analysis}, 51(3):2686–2712, 2019.

\bibitem{novack2018global}
Matthew~D Novack and Alexis~F Vasseur.
\newblock Global in time classical solutions to the {3D} quasi-geostrophic
  system for large initial data.
\newblock {\em Communications in Mathematical Physics}, 358(1):237--267, 2018.

\bibitem{novack_vasseur_2020classicalbdd}
Matthew~D. Novack and Alexis~F. Vasseur.
\newblock Classical solutions for the {3D} quasi-geostrophic system on a
  bounded domain.
\newblock {\em Physica D: Nonlinear Phenomena}, 404:132362, 2020.

\bibitem{novack_vasseur_2020bdd}
Matthew~D. Novack and Alexis~F. Vasseur.
\newblock {The Inviscid Three Dimensional} {Quasi-Geostrophic System} on
  {Bounded Domains}.
\newblock {\em Archive for Rational Mechanics and Analysis}, 235(2):973–1010,
  2020.

\bibitem{pedlosky2013geophysical}
Joseph Pedlosky.
\newblock {\em Geophysical fluid dynamics}.
\newblock Springer Science \& Business Media, 2013.

\bibitem{puel2015global}
Marjolaine Puel and Alexis~F Vasseur.
\newblock Global weak solutions to the inviscid {3D} quasi-geostrophic
  equation.
\newblock {\em Communications in Mathematical Physics}, 339(3):1063--1082,
  2015.

\end{thebibliography}

\end{document}